\documentclass{article}
\usepackage[T1]{fontenc}
\usepackage[english]{babel}
\usepackage{amsmath, amssymb, amsthm}
\usepackage{mathrsfs}
\usepackage{geometry}
\usepackage{graphicx}
\usepackage{fancyhdr} % intestazioni personalizzate
\usepackage{tikz}
\usetikzlibrary{decorations.markings,arrows.meta}
\usepackage{amscd}
\usepackage{epsf}
\usepackage{mathtools}
\usepackage{booktabs}
\usepackage{array}
\usepackage{multirow}
\usepackage{color}
\usepackage{tikz-cd}
\usepackage[symbol]{footmisc}
\usepackage{lipsum}
\usepackage{latexsym}
\usepackage{verbatim}
\usepackage{bbm}
\usepackage{setspace}
\usepackage[backref=page]{hyperref}
\renewcommand*{\backref}[1]{}
\renewcommand*{\backrefalt}[4]{%
	\ifcase #1 (Not cited)%
	\or        (Cited on page~#2)%
	\else      (Cited on pages~#2)%
	\fi}
\usepackage{etoolbox}
\usepackage[normalem]{ulem} % per \uline (sottolineatura)

\usepackage{enumitem}
\usepackage{anyfontsize}

\setlist[enumerate,1]{
  label=\textbf{(\arabic*)},
  ref=(\arabic*),
  leftmargin=*,
  itemsep=0.5ex,
  topsep=0.5ex
}

\let\OLDthebibliography\thebibliography
\renewcommand\thebibliography[1]{
  \OLDthebibliography{#1}
  \setlength{\parskip}{2pt}
  \setlength{\itemsep}{0pt plus 0.3ex}
}

\makeatletter
\renewenvironment{proof}[1][\proofname]{%
  \par\pushQED{\qed}%
  \normalfont\topsep6\p@\@plus6\p@\relax
  \trivlist
  \item[\hskip\labelsep
        \textbf{\textup{#1}}\@addpunct{.}]\ignorespaces
}{%
  \popQED\endtrivlist\@endpefalse
}
\makeatother
\newcommand{\mysubjclass}{\textup{2020} \textbf{Mathematics Subject Classification:}  53C55, 53D18, 53C25, 53D17.}
\newcommand{\mykeywords}{\textbf{Keywords: Generalized K\"ahler geometry;
Bismut--Hermitian--Einstein metrics;
pluriclosed solitons;
holomorphic Poisson structures;
K\"ahler orbifolds;
geometry with torsion.}
}

\fancypagestyle{firstpagefooter}{
 \fancyhf{} % pulisci header e footer
 \fancyfoot[L]{\scriptsize
 \mysubjclass \\
 \mykeywords \\
   }
}

\hypersetup{
    colorlinks=true,
    linkcolor=blue,    % Link per teoremi e equazioni in blu
    citecolor=red,     % Citazioni in rosso
    urlcolor=blue,     % Link URL in blu
    pdftitle={Generalized K\"ahler BHE threefolds},
    pdfauthor={Beatrice Brienza}
}

\theoremstyle{plain}
\newtheorem{thm}{Theorem}[section]
\newtheorem{introtheorem}{Theorem}[section]
\newtheorem{prop}[thm]{Proposition}

\newtheorem{defn}[thm]{Definition}
\newtheorem{cor}[thm]{Corollary}
\newtheorem{lem}[thm]{Lemma}
\newtheorem{claim}[thm]{Claim}

\theoremstyle{definition}
\newtheorem{rmk}[thm]{Remark}
\newtheorem{ex}[thm]{Example}
\numberwithin{equation}{section}

\newcommand{\Ric}{\mathrm{Ric}}

\newcommand{\R}{\mathbb{R}}

\newcommand{\SU}{\mathrm{SU}}

\renewcommand{\epsilon}{\varepsilon}
\renewcommand{\phi}{\varphi}

\renewcommand{\Im}{\mathrm{Im}}
\renewcommand{\div}{\mathrm{div}}
\DeclareMathOperator{\tr}{tr}
\DeclareMathOperator{\vol}{Vol}
\DeclareMathOperator{\grad}{grad}

\usetikzlibrary{shapes.misc}
\tikzset{cross/.style={cross out, draw, 
         minimum size=2*(#1-\pgflinewidth), 
         inner sep=0pt, outer sep=0pt},
         cross/.default={3.3pt}}

\def\eqref#1{(\ref{#1})}
\title{\textbf{Generalized K\"ahler BHE threefolds}}
\author{Beatrice Brienza}
\date{}

\begin{document}

\maketitle

\begin{abstract}
We study the notion of generalized K\"ahler
Bismut--Hermitian--Einstein (GKBHE) manifolds, namely
generalized K\"ahler manifolds for which both Bismut Ricci
forms vanish, and we develop a theory for  such structures on compact threefolds. We organize the analysis according to the horizontal rank, an invariant taking values in $\{0,1,2\}$ that measures
the relative position of the canonical foliations associated
with the two Hermitian structures. We characterize the case
of horizontal rank $0$ (which corresponds to K\"ahler Ricci flat geometry) and describe the case of horizontal
rank $1$, revealing its close interplay with the steady $\nu$-soliton equation in Kähler geometry. This description also provides a tool for constructing new compact GKBHE threefolds.
\end{abstract}

\thispagestyle{firstpagefooter}
\tableofcontents

\section{Introduction}

A Hermitian metric is \emph{Bismut--Hermitian--Einstein} (BHE)
if its Bismut connection has closed torsion and vanishing Ricci
form. BHE metrics are the natural non-K\"ahler analogue of
Calabi--Yau metrics
\cite{ABLS,ALL,ALLR,GFF,GFJS,GFStr,Rp2}: on compact manifolds
they are steady gradient solitons of the
pluriclosed flow \cite{GFJS,StreetsTian}, and they describe
supersymmetric string backgrounds \cite{FI,IP2,Str}. The theory
is rigid in low dimensions. Compact BHE surfaces are
classified \cite{GI}, % TODO: Gauduchon--Ivanov key
and on compact threefolds Apostolov, Barbaro, Lee and Streets
\cite{ABLS} showed that every non-K\"ahler BHE structure is
locally a principal $\mathbb R^2$-bundle over a K\"ahler surface
$(S,g_K)$ of positive scalar curvature $R_K$, whose Ricci form
$\rho_K$ satisfies the fourth-order equation
\begin{equation}\label{intro:box}
    \tfrac12\,dd^cR_K\wedge\omega_K
    =\rho_K\wedge\rho_K+4\,\alpha\wedge\alpha ,
\end{equation}
where $\alpha$ is the curvature of one of the two connection
forms. Equation \eqref{intro:box}, commonly called the \emph{Box equation} and already present in the
physics of $AdS_3\times Y_7$ backgrounds \cite{CAGMS,GAKI},
underlies all recent existence and topological results on BHE
threefolds \cite{ALL,ALLR,Steets}.
 
Generalized K\"ahler (GK) geometry offers a second, independent
extension of K\"ahler geometry. Introduced by Gates, Hull and
Ro\v cek \cite{GHR} as the target geometry of $(2,2)$
supersymmetric sigma models, and later recast by Gualtieri in
Hitchin's generalized geometry \cite{Cav,Gualtieri1,Gualtieri2,Hit},
a GK structure consists of two complex structures $I,J$,
orthogonal for the same metric $g$, whose Bismut connections
have closed torsions $H$ and $-H$. Its characteristic feature is
the real Poisson tensor $\sigma=\frac12[I,J]g^{-1}$, whose $(2,0)$ component with respect to either complex structure is holomorphic  \cite{AGG,Hit, Pont}.
 
This paper concerns the meeting point of the two theories.
 
\begin{defn}
A generalized K\"ahler manifold $(M,I,J,g)$ is
\emph{generalized K\"ahler BHE}, or \emph{GKBHE}, if both
$(I,g)$ and $(J,g)$ are BHE.
\end{defn}
 
\noindent
Equivalently, a GKBHE structure is a pair of BHE structures
    sharing the same metric and having opposite Bismut torsions. GKBHE structures have been considered previously, under the name of Bismut-Hermitian-Einstein pair, in ongoing work by Álvarez-Cónsul, De Arriba, Garcia-Fernandez and Van Ekeren \cite{ACDGFV} in relation to vertex algebras.
Besides Ricci-flat K\"ahler metrics, which are GKBHE for
$J=\pm I$, the basic example is the standard Hopf surface
$S^1\times S^3$, whose two Hermitian structures have flat Bismut connections.
Our goal is to understand to what extent the existence of the
second structure $J$ constrains the BHE geometry of $(I,g)$, and
conversely to construct compact GKBHE manifolds from data on the
quotient. We give a complete answer on compact threefolds in the
first two cases of a natural stratification.
\medskip

\noindent
\textbf{The horizontal rank.} Let $(M,I,J,g)$ be compact and GKBHE. Both $(I,g)$ and $(J,g)$
are gradient pluriclosed solitons, with the same normalized
potential $f$, and the vector fields
\[
    V_I=\tfrac12\bigl(\theta_I^\sharp-\grad f\bigr),
    \qquad
    V_J=\tfrac12\bigl(\theta_J^\sharp-\grad f\bigr),
\]
together with $IV_I$ and $JV_J$, are commuting Killing fields
encoding the a priori symmetries of the problem
(Lemmas~\ref{lem:a_priori_symmetries}--\ref{lem:norm} and
Proposition~\ref{prop:GKBHE-horizontal-symmetries}). If the
structure is not K\"ahler, $V_I$ and $IV_I$ span a
two-dimensional Riemannian foliation $\mathcal F_I$, which is the
vertical direction in the reduction of \cite{ABLS}. The
\emph{horizontal rank}
\[
    r=\max_{p\in M}\dim_{\mathbb R}
    \operatorname{span}\bigl\{
        \pi_{\mathcal H}V_J(p),\,
        \pi_{\mathcal H}JV_J(p)
    \bigr\}\in\{0,1,2\},
    \qquad \mathcal H=\mathcal F_I^\perp,
\]
measures how far the canonical foliation of $(J,g)$ is from
$\mathcal F_I$; we set $r=0$ in the K\"ahler case.
 
\begin{introtheorem}[cf.~Propositions~\ref{prop:rank0}
and~\ref{prop:linear_dependence}, Corollaries~\ref{cor:V_Iparallel}
and~\ref{cor:GKBHE-sign-involution}]
Let $(M,I,J,g)$ be a compact connected GKBHE manifold.
\begin{enumerate}
\item If $r=0$, then $(I,g)$ and $(J,g)$ are K\"ahler Ricci-flat.
\item If $r=1$, then $V_J=cV_I$ for some $c\in\{\pm1\}$, and
$V_I$ is parallel for the Levi-Civita connection.
\item If $r=1$, and $V_J=cV_I$ for some $c\in\{\pm1\}$, the bi-Hermitian structure $(I,-QJQ,g)$ is again GKBHE, where $Q$ is the reflection in
$V_I^\perp$. Furthermore
$V_{-QJQ}=-cV_I$.
\end{enumerate}
\end{introtheorem}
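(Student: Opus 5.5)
The proof splits into the three items, and throughout I use the a priori structure from Lemmas~\ref{lem:a_priori_symmetries}--\ref{lem:norm} and Proposition~\ref{prop:GKBHE-horizontal-symmetries}. On a compact GKBHE threefold, $V_I,IV_I,V_J,JV_J$ are commuting Killing fields, $f$ is common to both structures, and $|V_I|$, $|V_J|$ are constant and equal. The last point I expect to follow from the soliton identities $\theta_I=df+2V_I^\flat$, $\theta_J=df+2V_J^\flat$ together with $|\theta_I|^2=|\theta_J|^2$, which is a consequence of $|H|^2$ being common.

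\textbf{Item (1).} Suppose $r=0$ and the structure is not Kähler. Then $V_J$ and $JV_J$ lie in $\mathcal F_I=\operatorname{span}\{V_I,IV_I\}$ at every point. Since the norms agree and the vectors are nonzero, $\mathcal F_J=\mathcal F_I$ as oriented planes up to sign. On this plane $J=\pm I$, and because $J$ is $g$-orthogonal and preserves the plane, it also preserves the orthogonal complement $\mathcal H$, where again $J=\pm I$.
\begin{itemize}
\item If the signs agree on both pieces, then $J=\pm I$ globally. The torsions then satisfy $H=\pm H$ with opposite signs forced by the GK condition, so $H=0$ and both structures are Kähler Ricci-flat.
\item If the signs differ, I use the ABLS description of $(I,g)$: a local $\mathbb R^2$-bundle over a Kähler surface. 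There $H$ is built from the connection forms and their curvatures, and I would check that the torsion of $J=I|_{\mathcal H}\oplus(-I)|_{\mathcal F}$ is not $-H$ unless the curvature vanishes. This would force $V_I=0$, a contradiction.
\end{itemize}
I expect this sign-mixing case to be the main obstacle. What I would try first is to compare $d^c_I\omega_I$ and $d^c_J\omega_J$ directly, using $\omega_J=\omega_I|_{\mathcal H}-\omega_I|_{\mathcal F}$ and the structure equations from \cite{ABLS}.

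\textbf{Item (2).} Now $r=1$. The projected fields $\pi_{\mathcal H}V_J$ and $\pi_{\mathcal H}JV_J$ are Killing-invariant objects, and they are everywhere linearly dependent. Write $V_J=aV_I+bIV_I+h$ with $h\in\mathcal H$, where $a,b$ are constant because all the fields commute and are Killing with constant inner products. Computing $\pi_{\mathcal H}(JV_J)$ and requiring it to be proportional to $h$ should force $h=0$ wherever $J$ does not preserve $\mathcal F_I$, or should show that the whole horizontal contribution is one-dimensional and spanned by a fixed combination. In either case I would then use that $\theta_I-\theta_J=2(V_I-V_J)^\flat$ is the difference of Lee forms.

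The key input is the identity $\theta_I+\theta_J$ relation for GK structures with opposite torsion; equivalently, that $d(\theta_I-\theta_J)$ relates to $\sigma$. From this I would show that $V_J\in\operatorname{span}\{V_I\}$. Equality of norms then gives $c=\pm1$. For parallelism: $V_I$ is Killing, so $\nabla V_I$ is the skew form $\tfrac12 dV_I^\flat$. Using $dV_I^\flat=\pm dV_J^\flat$ together with the soliton structure, where $\nabla^{B}_I$-parallel objects relate to $\nabla$ through $H$, I would show $dV_I^\flat$ has to be both $I$- and $J$-compatible, and hence vanishes in the rank-one configuration.

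\textbf{Item (3).} Since $V_I$ is parallel, $M$ locally splits as $\mathbb R V_I\times V_I^\perp$, and $Q$ is a parallel orthogonal involution. The structure $J'=-QJQ$ is then orthogonal and integrable, because conjugating by a parallel (hence affine) map preserves the Nijenhuis tensor. Its Bismut torsion is $-Q^*H_J=Q^*H$.

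I next need $Q^*H=H$ restricted appropriately, so that $J'$ has torsion $-H$ relative to $I$. Here I would use that $\iota_{V_I}H$ is determined by $dIV_I^\flat$ through the ABLS formulas, so that the sign behaviour of $H$ under the reflection matches the flip. Finally, the Bismut Ricci form is natural under the isometry $Q$ and under $J\mapsto -J$, so $J'$ remains BHE. The Lee form transforms as $\theta_{J'}=Q^*\theta_J$, and with $f$ being $Q$-invariant this gives $V_{J'}=QV_J=cQV_I=-cV_I$.
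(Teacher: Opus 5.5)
\textbf{Items (1) and (2).} Neither item is proved, and both sketches contain false steps.

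In (1), the fact that $J$ preserves $\mathcal H$ does not force $J=\pm I$ on $\mathcal H$. This space has real dimension $2n-2$ ($=4$ already on threefolds), and it carries many $g$-orthogonal complex structures besides $\pm I|_{\mathcal H}$. So your two cases are not exhaustive, and the sign-mixing case is left open anyway.

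In (2), commuting Killing fields only make $g(V_J,V_I)$ and $g(V_J,IV_I)$ basic, not constant. In general $da_i=-\iota_{X_i}d\eta$ and $db_i=-\iota_{X_i}d\eta_I$ (Proposition~\ref{prop:GKBHE-horizontal-symmetries}). The same reasoning would make $g(JV_J,IV_I)$ constant, which fails in the Hopf example. The appeal to $\theta_I-\theta_J$ and $\sigma$ does not produce $V_J\in\operatorname{span}\{V_I\}$.

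The missing mechanism, which drives both items, is the following. Suppose a field $Z$ is a \emph{constant}-coefficient combination of $V_I,IV_I$ and also of $V_J,JV_J$. Then $Z$ is parallel for $\nabla^{B,I}$ and $\nabla^{B,J}$, hence for $\nabla=\tfrac12(\nabla^{B,I}+\nabla^{B,J})$, so $dZ^\flat=0$. The trace identities $\operatorname{tr}_{\omega^T}d\eta=0$ and $\operatorname{tr}_{\omega^T}d\eta_I=-4$ of Proposition~\ref{prop:compact-BHE-transverse-equations}, applied to $(I,g)$ and to $(J,g)$, then kill the $IV_I$- and $JV_J$-components of $Z$.

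For $r=0$, take $Z=JV_J=a_2V_I+b_2IV_I$. The coefficients $a_2,b_2$ are basic, and $\mathcal L_Zg=0$ evaluated on $(Y,V_I)$ and $(Y,IV_I)$ with $Y\in\mathcal H$ gives $Y(a_2)=Y(b_2)=0$, so they are constant. Then $d(JV_J)^\flat=0$ contradicts $\operatorname{tr}_{\omega_J^T}d(JV_J)^\flat=-4$ in the non-K\"ahler case.

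For $r=1$ you first need a \emph{constant} relation $c_1X_1+c_2X_2=0$. Locally $X_1=\lambda X_2$ with $\lambda$ basic; $\lambda$ is constant because both fields are transversely Killing; and real analyticity propagates the relation to all of $M$. Then $Z=c_1V_J+c_2JV_J=AV_I+BIV_I$ forces $B=0$ and $c_2=0$, so $V_J=\pm V_I$ by Lemma~\ref{lem:norm}. Once this is known, $\nabla V_I=0$ is immediate by averaging: $V_I$ is $\nabla^{B,I}$-parallel and $cV_I=V_J$ is $\nabla^{B,J}$-parallel. Your detour through $dV_I^\flat$ being $I$- and $J$-compatible is unnecessary and unjustified.

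\textbf{Item (3).} The claim that ``conjugating by a parallel map preserves the Nijenhuis tensor'' is false as stated. $Q$ is a parallel endomorphism field, not a map of $M$. Writing $N$ through $\nabla$, one finds that $N_{QJQ}(X,Y)-QN_J(QX,QY)$ consists of terms containing $\nabla_{V_I}J=\mathcal L_{V_I}J$.

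Your splitting idea can be repaired. In a local product chart $g=dt^2+g_N$, $V_I=\partial_t$, one has $QJQ=\phi^*J$ for the reflection $\phi(t,x)=(-t,x)$ precisely because $\mathcal L_{V_I}J=0$ (Lemma~\ref{lem:IJ_holomorphic}), and you must invoke this. Naturality then gives integrability, $\rho^{B,-QJQ}=0$ and $\theta_{-QJQ}=\theta_J\circ Q$ at once, which is a legitimate alternative to the paper's argument.

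Your torsion bookkeeping, however, has the wrong sign. Since $H_{-K}=H_K$, one gets $H_{-QJQ}=Q^*H_J=-Q^*H$; your $-Q^*H_J$ is inconsistent with your own next sentence. What is needed is $Q^*H=H$, i.e.\ $\iota_{V_I}H=0$. This holds because $H=\eta\wedge d\eta+\eta_I\wedge d\eta_I+H^T$ gives $\iota_{V_I}H=d\eta$, and $d\eta=0$ since $V_I$ is parallel; it has nothing to do with $d\eta_I$.

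The paper avoids the local splitting altogether. From $\nabla V_I=0$ and $\iota_{V_I}H=0$ it gets $\nabla^{B,J}Q=0$, hence $\nabla^{B,J}\widehat J=0$. The Friedrich--Ivanov formula for $N_{\widehat J}$ in terms of $H$ then yields $N_{\widehat J}=Q^*N_J=0$, and $\rho^{B,\widehat J}=-\rho^{B,J}$ and $\theta_{\widehat J}=\theta_J\circ Q$ follow by direct computation.
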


Thus rank-one structures come in pairs with the same metric and
the same $I$, exchanged by an involution reminiscent of
T-duality \cite{cavalcanti2010generalized,lindstrom2008t}. The
two members of the pair have different Poisson geometry. On
threefolds, normalizing $I$ and $J$ to induce the same
orientation, $\sigma$ vanishes identically for $c=1$, so that
$TM$ splits holomorphically
(Proposition~\ref{prop:vanishin_poisson}; cf.~\cite{AG}), whereas
for $c=-1$ it has rank four on a dense open set, with image
$\operatorname{span}\{V_I,IV_I,JV_I,IJV_I\}$. In both cases we
obtain a simultaneous normal form for $I$ and $J$
(Theorem~\ref{thm:normal_form_of_J}). The horizontal rank $2$ case will be explored in a future paper. 

\medskip
\noindent
\textbf{Rank one in dimension three: the transverse geometry.} When $r=1$, the curvature $\alpha$ in \eqref{intro:box}
vanishes, since $V_I^\flat$ is closed. The second BHE structure $(J,g)$,
however, imposes a much stronger, second-order constraint.
 
\begin{introtheorem}[cf.~Theorem~\ref{thm:transverse_geometry}]
Let $(M,I,J,g)$ be a compact connected GKBHE threefold with
$r=1$, $V_J=cV_I$ and $|V_I|=1$, and let $g_K$ be the transverse
K\"ahler metric. Then the basic function
\[
    F=e^{-f}\bigl(1+c\,g(IV_I,JV_J)\bigr)\ \ge 0
\]
is positive on a dense open set, and
\begin{equation}\label{intro:potential}
    I\grad_{g_K}F\ \text{is holomorphic},
    \qquad
    \rho_K=-dd^c\log F
    \quad\text{on }\{F>0\}.
\end{equation}
Moreover, $I\grad_{g_K}F$ is a constant multiple of the
horizontal part of $JV_J$.
\end{introtheorem}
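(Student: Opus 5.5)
The plan is to reduce everything to the transverse Kähler surface $(S,g_K)$ of the ABLS description and to read the second structure $J$ through the single function $u:=g(IV_I,JV_J)$. By the previous intro theorem and Corollary~\ref{cor:V_Iparallel}, $V_I$ is parallel, $V_J=cV_I$, and $|V_I|=1$. The vertical distribution $\mathcal F_I=\operatorname{span}\{V_I,IV_I\}$ is therefore spanned by commuting Killing fields, and $\alpha=0$. The first step is to decompose $JV_I$ into its vertical and horizontal parts. Since $JV_I\perp V_I$ and $|JV_I|=1$, we have
\[
JV_I=u'\,IV_I+X,\qquad X\in\mathcal H,\quad |X|^2=1-u^2,\quad u=c\,u' ,
\]
so that $1+c\,u=1+u'^2\cdot(\text{sign})$ degenerates exactly where $JV_I=-c\,IV_I$, that is, where $X=0$. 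Nonnegativity of $F$ is then immediate from $|u|\le 1$. Positivity on a dense open set follows from $r=1$ combined with analyticity: BHE metrics are real-analytic as soliton solutions, so $X$ cannot vanish on an open set unless it vanishes identically, which is ruled out by $r=1$. I also plan to show that $f$ and $u$ are basic, since both are invariant under the Killing fields $V_I$, $IV_I$ by Lemma~\ref{lem:a_priori_symmetries}. This makes $F$ a function on $S$.

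The second step is to show that $X$ descends to a transversally holomorphic Killing field, i.e.\ that $IX$ is transversally holomorphic. This follows from Proposition~\ref{prop:GKBHE-horizontal-symmetries}: $JV_J$ is Killing and commutes with $V_I$ and $IV_I$, while $IV_I$ is itself Killing and $I$-holomorphic. Hence $X=JV_J\cdot c-u'IV_I$, taken modulo vertical fields, projects to a Killing field of $g_K$ preserving $I$. It is therefore Hamiltonian-type on $S$. I would then identify the potential explicitly by computing $d u$ and $df$ from $\nabla V_I=0$ and the formula for $\nabla J$ in terms of the torsion $H$ (namely $\nabla^{-}J=0$). The target identity is $\iota_{X}\omega_K = \kappa\, d^c F$ or, equivalently, $X=\kappa\, I\grad_{g_K}F$ for a constant $\kappa$. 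Concretely, I expect
\[
d(e^{-f}(1+cu))=e^{-f}\big(du-(1+cu)df\big),
\]
and I would use $\theta_J=d^c_J\log(\ldots)$ together with $\theta_J^\sharp=2V_J+\grad f$ to cancel the vertical contributions and produce a multiple of $\omega_K(X,\cdot)$. Combining this with the previous paragraph gives the holomorphy of $I\grad_{g_K}F$ and the proportionality statement.

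The final step is the curvature formula $\rho_K=-dd^c\log F$. The Bismut Ricci form of $(J,g)$ vanishes, and $\rho^B_J=-dd^c_J\log$ of a suitable volume density. Since $\Omega_J$ is the complex volume form of $J$, its comparison with the pullback of the $(2,0)$-form of $S$ wedged with $(V_I+iIV_I)^\flat$ is governed by the angle function: the determinant of $J$ relative to $I$ along $\mathcal H$ involves $(1+cu)$. Using $\rho^B_I=0$, whose content by \cite{ABLS} is $\rho_K=-dd^c f$ on basic forms, we get a candidate $\rho_K=-dd^c\log(e^{-f}(1+cu))$. The step I expect to be hardest is this Ricci comparison: relating $\rho^B_J=0$, which is written in $J$-types, to a statement in $I$-types on $S$ on the set $\{F>0\}$ where $J$ and $I$ are transverse-generic. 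I would first attempt it by writing a local $J$-holomorphic volume form as $(V_I^\flat+iJV_I^\flat)\wedge\beta$ and expressing $\beta$ through horizontal $I$-coframes rotated by the angle encoded in $u$.
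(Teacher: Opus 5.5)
There is a genuine gap. The proposal never determines how $J$ acts on the horizontal distribution $\mathcal H$, and that information is what both the gradient identity and the Ricci-potential identity depend on. The curvature step also rests on a false identity.

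\textbf{The gradient step.} Your first step is fine in substance: density of $\{F>0\}$ via real-analyticity of $X$, and basicness of $F$, are exactly what the paper uses. A few slips should be corrected:
\begin{itemize}
\item $1+cu=1+u'$ vanishes only where $JV_I=-IV_I$, whereas $X=0$ wherever $u'=\pm1$. Only the inclusion $\{X\neq0\}\subset\{F>0\}$ holds, and that is all you need.
\item The derivative is $dF=e^{-f}(c\,du-(1+cu)\,df)$; you dropped the factor $c$.
\item $\iota_X\omega_K=\kappa\,d^cF$ is not equivalent to $X=\kappa I\grad_{g_K}F$. The latter gives $\iota_X\omega_K=-\kappa\,dF$.
\end{itemize}

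The genuine gap is in your second step. Write $b=u$ and let $X$ be the horizontal part of $JV_J$. Everything hinges on
\[
db=(b+c)\,df+2(IX)^\flat,\qquad\text{equivalently}\qquad dF=2ce^{-f}(IX)^\flat ,
\]
which is Lemma~\ref{claim:db}. Your plan has no mechanism that produces it.

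Differentiating $u$ using $\nabla V_I=0$, $\nabla^{B,I}(IV_I)=0$, $\nabla^{B,J}(JV_J)=0$ and $\iota_{V_I}H=0$ only returns $du=-\iota_Xd\eta_I$. That is item~\ref{item:GKBHE-curvature-potentials} of Proposition~\ref{prop:GKBHE-horizontal-symmetries}, and it does not involve $f$ at all.

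The only link between $f$ and $J$ is $\theta_J=df+2c\eta$. There is no formula of the form $\theta_J=d^c_J\log(\cdots)$ that lets you use it. You have to evaluate $\theta_J(Y)=\tfrac12\sum_iH(e_i,Je_i,JY)$, which requires knowing $J$ on all of $TM$, in particular on $\mathcal H$.

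That is precisely the normal form of $J$ (Theorem~\ref{thm:normal_form_of_J}), which your proposal never establishes or invokes. It needs genuinely new input:
\begin{itemize}
\item $[I,J]=0$ when $c=1$, via the divergence formula for $\pi_+$;
\item $V_I\in\operatorname{Im}\sigma$ when $c=-1$, via the pure-spinor result of \cite{AFSU};
\item hence $J$-invariance of $\mathcal E=\operatorname{span}\{V_I,IV_I,JV_I,IJV_I\}$;
\item a sign determination of $J$ inside $\mathcal E$, and an orientation argument fixing $J$ on $\mathcal E^\perp$.
\end{itemize}
In that frame, $H=\eta_I\wedge d\eta_I+Idf\wedge\omega^T$ and $\operatorname{tr}_{\omega^T}d\eta_I=-4$ give $\theta_J|_{\mathcal H}=c\,db-2c(IX)^\flat-cb\,df$. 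Comparing this with $\theta_J|_{\mathcal H}=df$ yields the identity.

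\textbf{The curvature step.} This step rests on a false premise. The transverse content of $\rho^{B,I}=0$ is $d\eta_I=-\tfrac12\rho_K$ together with $R_K=8e^f$ (Theorem~\ref{thm:BHE-six-to-four}), not $\rho_K=-dd^cf$. The latter would make $\rho_K$ $dd^c$-exact. It already fails for the Hopf example, where $f$ is constant but $\rho_K$ is the nonzero Ricci form of the $\mathbb{CP}^1$ factor.

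Moreover, $\rho^{B,J}$ is not simply $-dd^c_J\log$ of a volume density. It differs from the Chern--Ricci form by an exact term built from $J\theta_J$, and with $\theta_J=df+2c\eta$ this term involves $d(JV_I)^\flat$. In any case, writing a $J$-holomorphic volume form in terms of rotated $I$-coframes again presupposes the explicit action of $J$ on $\mathcal H$.

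The paper does not use $\rho^{B,J}$ here. Once $dF=2ce^{-f}(IX)^\flat$ and $|X|^2=1-b^2$ are known, it argues as follows:
\begin{itemize}
\item From the $\nabla^{B,J}$-parallelism of $JV_J$ it computes $\operatorname{div}(IJV_I)=2\theta_I(IJV_I)$. This gives $\Delta_{g_K}F=-8cb$ and $\Delta_{g_K}\log F=-R_K/2$.
\item Hence $\beta:=\rho_K+dd^c\log F$ is trace-free, and so anti-self-dual.
\item One has $\iota_Xdd^c\log F=-2\,db$ and $\iota_X\rho_K=-2\iota_Xd\eta_I=2\,db$, so $\iota_X\beta=0$.
\item A nonzero anti-self-dual $2$-form in dimension four is nondegenerate. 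So $\beta=0$ on the dense set where $X\neq0$, and therefore on $\{F>0\}$.
\end{itemize}
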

 
In other words, $F$ is at the same time a Hamiltonian for a holomorphic
Killing field and, through its logarithm, a Ricci potential. 

We now turn to the converse, which is our main result. Its starting
point is a curvature identity valid on any K\"ahler surface: the
Ricci--Hamiltonian condition \eqref{intro:potential} already implies 
\eqref{intro:box} with $\alpha=0$
(Proposition~\ref{prop:kahler-identities}). Remarkably, the
fourth-order equation \eqref{intro:box} is a consequence of the
second-order system \eqref{intro:potential}. Positivity of $R_K$ is
not needed for this identity, but it is needed to go back up: when
$R_K>0$, the converse construction of \cite{ABLS}, with soliton
potential $f=\log(R_K/8)$, provides a BHE structure $(I,g)$ on any
principal $\mathbb T^2$-bundle with connection curvatures
$(0,-\tfrac12\rho_K)$. The central new step is to recover from the
same transverse data a second complex structure $J$ such that
$(I,J,g)$ is GKBHE. No further condition is
required: the Ricci--Hamiltonian equation \eqref{intro:potential} and $R_K>0$ suffice.
 
\begin{introtheorem}[cf.~Theorem~\ref{thm:global_converse}]\label{thm:introthm}
Let $(S,g_K)$ be a compact connected K\"ahler orbifold surface
with $R_K>0$, carrying a smooth function $F\ge0$, $F\not\equiv0$,
which solves \eqref{intro:potential}. Let $M\to S$ be a principal
$\mathbb T^2$-orbibundle with smooth total space and connection
forms $\eta,\eta_I$ such that $d\eta=0$ and
$d\eta_I=-\frac12\rho_K$. Then, for each $c\in\{\pm1\}$, $M$
carries a GKBHE structure $(I,J_c,g)$ of horizontal rank one with
$V_{J_c}=cV_I$, where $(I,g)$ is the BHE structure of
\cite{ABLS}.
\end{introtheorem}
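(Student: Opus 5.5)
We construct $J_c$ explicitly from the transverse data and then verify the GKBHE conditions by reducing them to \eqref{intro:potential}. Write $M\to S$ with connection forms $\eta,\eta_I$ and dual vertical fields $V,IV$ (so $\eta(V)=1$, $\eta_I(IV)=1$). Following \cite{ABLS}, the BHE structure $(I,g)$ has
\[
g=e^{-f}\bigl(\eta^2+\eta_I^2\bigr)+e^{-f}g_K,\qquad f=\log(R_K/8),
\]
or the corresponding normalization from the converse construction there. The complex structure $I$ is the lift of the transverse complex structure on $\mathcal H=\ker\eta\cap\ker\eta_I$, with $IV=$ the second generator. We will use this $(I,g)$ as given, including closedness of $d^c_I\omega_I$, vanishing of $\rho^B_I$, and $V_I=V$ up to the normalization $|V_I|=1$, which is forced since $d\eta=0$. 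The task is to build $J_c$.

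\textbf{Step 1: ansatz for $J_c$.} Guided by Theorem~\ref{thm:normal_form_of_J} and by the forward Theorem~\ref{thm:transverse_geometry}, we define $J_c$ as follows. It satisfies $J_cV=cIV$-type behaviour modified by the horizontal field $X:=I\grad_{g_K}F$ (lifted horizontally). Concretely, we set $J_cV_I=c\,V_I$-compatible so that $V_{J_c}=cV_I$. We choose $J_cV_I$ to be the unit vector $\frac{1}{N}\bigl(a\,IV_I+b\,\widetilde X\bigr)$, with coefficients determined by requiring $g(IV_I,J_cV_{J_c})=c(e^fF-1)$, which inverts the formula for $F$ in Theorem~\ref{thm:transverse_geometry}. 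On the orthogonal complement we take $J_c$ to be determined by orientation (agreeing with $I$'s orientation) and $g$-orthogonality. In dimension three this means $J_c$ is fixed by its action on $V_I$ together with the requirement that it lie in the same $\SU(3)$-orbit class. For $c=-1$ one can equivalently define $J_{-1}=-QJ_1Q$ and invoke Corollary~\ref{cor:GKBHE-sign-involution}, so it suffices to treat $c=1$. In that case $\sigma=0$ and $TM$ splits as $\operatorname{span}\{V_I,J V_I\}\oplus$ its complement, which makes the ansatz a product-type structure. The points where $F=0$ have to be handled by checking that the formulas extend smoothly there. Near such points $X$ vanishes and $J_c\to\pm I$-type behaviour occurs, and $F\ge0$ with $F$ a Killing Hamiltonian should give the needed regularity.

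\textbf{Step 2: integrability and Bismut conditions.} We verify that $J_c$ is integrable and that $d^c_{J_c}\omega_{J_c}=-d^c_I\omega_I$. In the split case $c=1$, integrability reduces to two things: $J_c$ preserves a pair of complementary foliations, and each leaf structure is holomorphic. Holomorphicity of $X$ and invariance of everything under the $\mathbb T^2$-action handle this. The torsion identity should reduce, after computing with $d\eta=0$ and $d\eta_I=-\frac12\rho_K$, to the horizontal equation $\rho_K=-dd^c\log F$, since this is exactly what relates the curvature of $\eta_I$ to the twisting of $J_c$.

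\textbf{Step 3: Bismut Ricci of $J_c$.} We then check $\rho^B_{J_c}=0$. Using the soliton characterization (Lemmas~\ref{lem:a_priori_symmetries}--\ref{lem:norm}), it suffices to show that $\theta_{J_c}^\sharp-\grad f=2cV_I$ and that $J_cV_{J_c}$ is Killing and holomorphic. The latter follows from holomorphicity of $X$ and $\mathbb T^2$-invariance. Proposition~\ref{prop:kahler-identities} then shows consistency with the Box equation for $\alpha=0$.

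\textbf{Main obstacle.} The main obstacle is Step 2, namely getting the exact coefficients of the ansatz so that the torsion identity closes. I would first compute in a local adapted coframe $\{\eta,\eta_I,e^1,Ie^1\}$ aligned with $X$ on $\{F>0\}$, where everything is explicit. I would then argue by density and continuity across $\{F=0\}$.
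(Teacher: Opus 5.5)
Your outline has the right architecture: treat $c=1$ over $\{F>0\}$, obtain $c=-1$ from the reflection $-QJ_1Q$, and extend across $\{F=0\}$ by density. However, each load-bearing step is either missing or rests on a claim that is false as stated.

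\textbf{The BHE property of $(I,g)$ and the ansatz for $J_c$.} The BHE property of $(I,g)$ is not free. The hypotheses are only \eqref{intro:potential} and $R_K>0$, while the construction of \cite{ABLS} needs the Box equation with $\alpha=0$, namely $\rho_K\wedge\rho_K=\frac12dd^cR_K\wedge\omega_K$. You treat Proposition~\ref{prop:kahler-identities} as a final consistency check, but it is the real entry point. Bochner's formula for the Killing field $I\grad F$ gives $\iota_{I\grad F}\rho_K=\frac12d\Delta F$. Comparing this with the contraction and the trace of $\rho_K=-dd^c\log F$ shows that $\kappa:=R_KF+\Delta F$ is a positive constant, equivalently $|dF|^2=\kappa F-\frac12R_KF^2$. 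This yields $d^c\log F\wedge\rho_K=-\frac12d^cR_K\wedge\omega_K$, whose exterior derivative is the Box equation.

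The same first integral is what your Step 1 lacks. The horizontal part of $J_cV_I$ must be a \emph{constant} multiple of $I\grad F$ (as in Lemma~\ref{claim:gradient}; holomorphicity is used later), and a non-constant normaliser $N$ destroys this. With that constraint, $|J_cV_I|=1$ is exactly the identity $|dF|^2=\kappa F-\frac12R_KF^2$. Your choice $b=c(e^fF-1)$ is correct only after rescaling $F$ so that $\kappa=8$, since the hypotheses fix $F$ only up to a positive factor. In general $b_c=c(R_KF/\kappa-1)$ and $X_c=-\frac{4c}{\kappa}I\grad F$.

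Moreover, $J_c$ is not determined by $J_cV_I$, $g$ and the orientation. On the orthogonal complement of $\operatorname{span}\{V_I,J_cV_I\}$ a $2$-sphere of choices remains. This freedom is removed by the normal form \eqref{eqn:matrix_J}: $J_c$ preserves $\mathcal E=\operatorname{span}\{V_I,IV_I,J_cV_I,IJ_cV_I\}$ with a sign on $\mathcal E$ fixed by $c$, and equals $-cI$ on $\mathcal E^\perp$. This yields the explicit formula \eqref{eqn:local_converse_J}. Note also that the metric here is $\eta^2+\eta_I^2+e^fg_K$, not your $e^{-f}(\eta^2+\eta_I^2)+e^{-f}g_K$.

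\textbf{Integrability, torsion, Bismut Ricci form and extension.} Steps 2--3 and the extension are stated as expectations, and the mechanisms you name do not work.

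For $c=1$ the relevant splitting is not $\operatorname{span}\{V_I,J_1V_I\}\oplus(\cdot)^\perp$; that plane is not $I$-invariant wherever $X\neq0$. It is $L_\pm=\ker(I\mp J_1)$.
\begin{itemize}
\item $L_+$ is spanned by the $I$-holomorphic fields $(1+b_1)V_I-IX_1$ and $(1+b_1)IV_I+X_1$.
\item $L_-$ is the kernel of $\beta=\eta-\frac12d\log F+\sqrt{-1}(\eta_I-\frac12d^c\log F)$. This form is closed precisely because $d\eta=0$ and $d\eta_I=-\frac12\rho_K=\frac12dd^c\log F$.
\item The leaves of $L_-$ are K\"ahler by item~\ref{item:potential} of Proposition~\ref{prop:kahler-identities}.
\end{itemize}
With these properties and $J_1=\pm I$ on $L_\pm$, you get integrability, $H_{J_1}=-H$ and $\theta_{J_1}=\theta_I$.

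For $\rho^{B,J_1}=0$ you cannot invoke Lemmas~\ref{lem:a_priori_symmetries}--\ref{lem:norm}: they are consequences of the GKBHE condition, not a criterion for it. What works is the soliton identity $\rho^{B,J}\circ J=\nabla^{B,J}(\theta_J-df)$, with $\theta_J-df=2\eta$ parallel (Claim~\ref{claim:GRS}).

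Finally, $F\ge0$ being a Killing Hamiltonian does not give regularity across $\{F=0\}$. Up to a smooth factor, the singular term of $J_c$ is $\bigl(\iota_Y(F^{-1}dF\wedge d^cF)\bigr)^\sharp$. For the Killing Hamiltonian $F=|z_1|^2+2|z_2|^2$ on flat $\mathbb C^2$, this form is discontinuous at the origin. Smoothness instead comes from the identity $F^{-1}dF\wedge d^cF=dd^cF+F\rho_K$ on $\{F>0\}$, again a consequence of $\rho_K=-dd^c\log F$; the right-hand side is smooth on all of $S$, and density of $M_+$ does the rest. You also need $X_c\not\equiv0$ for horizontal rank one; this holds because otherwise $F$ would be constant and $\rho_K=0$.
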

 
The proof combines a local construction with an extension
argument across the zero set of $F$, which is necessarily
nonempty when the base is compact and has positive scalar
curvature.

We first work on $M_+:=p^{-1}(S_+)$, where
$S_+:=\{F>0\}$.
Starting from the BHE structure $(I,g)$, we use the normal
form established earlier to define two $g$-compatible
almost complex structures $J_c$, corresponding to
$c=\pm1$, whose expressions have apparent singularities
along the $M \setminus M_+$.

For $c=1$, the key step is an orthogonal splitting
$TM_+=L_+\oplus L_-$ into involutive subbundles with
K\"ahler leaves, satisfying $I=J_1$ on $L_+$ and
$I=-J_1$ on $L_-$.
The holomorphic properties of this splitting yield the
integrability of $J_1$, while the K\"ahler property of
the leaves and the sign relations between $I$ and $J_1$
give opposite Bismut torsions.
The remaining BHE identity $\rho^{B,J_1}=0$ follows from
the gradient pluriclosed soliton structure of $(I,g)$.
The involution $J_{-1}=-QJ_1Q$ produces the case $c=-1$,
completing the local converse
(Theorem~\ref{thm:local_converse}).

Finally, we prove that $M_+$ is dense and
use the Ricci-potential identity to remove the apparent
singularities in the formulas for $J_c$.
Hence $J_c$ extends smoothly across
$M \setminus M_+$, and the defining GKBHE identities
hold on the whole total space by continuity.

\medskip
\noindent
\textbf{Further remarks.} 
Equation \eqref{intro:potential} is closely related to weighted solitons. On $S_+=\{F>0\}$,
the identity $\rho_K=-dd^c\log F$, with $F$ a Hamiltonian
potential for a holomorphic Killing vector field, is precisely
the steady $\nu$-soliton equation with weight $\nu(x)=x^{-2}$.
For the compact shrinking (Fano) setting, see
\cite{Mabuchi2003,BermanWittNystrom2014,HanLi2023,ApostolovLahdiliNitta2025}. This is explained in detail in Remark \ref{rmk:nu_soliton}.

Furthermore, the geometry described by Equation \eqref{intro:potential} is reminiscent of the special
K\"ahler--Ricci potentials of Derdzi\'nski and Maschler
\cite{DM}, which however impose additional conditions on the
Hessian and the Ricci tensor. Where $F>0$ and $dF\ne0$, the
metric fits LeBrun's ansatz for K\"ahler surfaces with a
Hamiltonian Killing field \cite{LeBrun1991}, and
\eqref{intro:potential} yields the first integral
\[
    \tfrac12 R_KF+F^{-1}|\grad_{g_K}F|_{g_K}^{2}=\kappa>0 ,
\]
suggesting a relation with Toda-type equations. 
\medskip

In forthcoming
work \cite{BS}, Theorem \ref{thm:introthm} is used to produce compact GKBHE
threefolds diffeomorphic to $S^1\times\sharp_5(S^2\times S^3)$,
by resolving four out of eight orbifold points of
$(\mathbb{CP}^1\times E)/\mathbb Z_2$ with Eguchi--Hanson metrics
and solving an invariant Monge--Amp\`ere equation
(cf.~Proposition~\ref{prop:MA}), following a strategy proposed in
\cite{ALLR} to construct new examples of compact BHE threefolds.

\smallskip

{\bf Acknowledgments}: 
This work grew out of a conversation with Jeff Streets.
The author is deeply grateful to him for his continued
encouragement and many stimulating
discussions throughout the development of this project.
The author is also deeply grateful to Vestislav Apostolov for pointing out
the connection with steady $\nu$-solitons and for his helpful
comments regarding the manuscript. The author is also grateful to Anna Fino for kindly reading the first draft of this paper.
Editing and proof checking were carried out with the assistance of ChatGPT (OpenAI), access to which was generously funded by IMPA.

\section{Conventions, notation and standard definitions}
Let $(M^{2n},I,g)$ be a Hermitian manifold.
We use the following conventions. Throughout, $\{e_i\}_{i=1}^{2n}$
denotes a local $g$-orthonormal frame. 

\begin{itemize}
    \item
    We denote the Levi-Civita connection by $\nabla$, or by $\nabla^g$
    when the metric needs to be specified, and its curvature tensor
    by $\mathrm{R}^{\nabla}$.

    \item
    The Laplacian on functions is the negative of the Hodge Laplacian:
    \begin{equation} \label{eqn:laplacian}
       \Delta f=-d^*df
        =\div(\grad f)
        =\tr_g(\nabla df),   
    \end{equation}
    where $d^*$ denotes the Hodge codifferential.

    \item
    The complex structure $I$ acts on a $k$-form $\alpha$ by $(I\alpha)(X_1,\dots,X_k)=\alpha(IX_1,\dots,IX_k)$.

    \item
    We define $d^c=I^{-1}\circ d\circ I
        =i(\bar\partial-\partial)$.

    \item
    The fundamental form is defined by $\omega(X,Y)=g(IX,Y)$.

    \item For a $2$-form $\alpha$, we use the convention
    \[
    \operatorname{tr}_{\omega}\alpha
    :=
    \sum_{i=1}^{2n} \alpha(e_i,Ie_i).
    \]

    \item
    The \emph{Bismut connection}, denoted by $\nabla^{B,I}$, is the unique
    Hermitian connection with totally skew-symmetric torsion
    \cite{Bis,FI,Str}. It is given by
    \begin{equation}\label{eqn:bismut}
        g(\nabla^{B,I}_XY,Z)
        =g(\nabla_XY,Z)+\tfrac12 H(X,Y,Z),
    \end{equation}
    where $H(X,Y,Z)=-d^c\omega(X,Y,Z)=d\omega(IX,IY,IZ)$. $H$ will be called \emph{Bismut torsion}.

    \item
    The Hermitian structure $(I,g)$ is called \emph{pluriclosed}
    (or SKT) if $dH=0$, equivalently if
    $\partial\bar\partial\omega=0$.

    \item \label{item:Lee_form}
    The Lee form is the unique $1$-form $\theta$ satisfying $d\omega^{n-1}=\theta\wedge\omega^{n-1}$.
    In terms of the Bismut torsion, it is given by
    \begin{equation}\label{eqn:Lee_form}
        \theta(X)
        =-\tfrac12\sum_{i=1}^{2n}H(IX,e_i,Ie_i),
    \end{equation}
    see, for instance, \cite[Equation (2.9)]{IP2}.

    \item
    The following identity holds \cite{IP2}:
    \begin{equation}\label{eqn:torsion_trace}
        \sum_{i,j=1}^{2n}dH(e_i,Ie_i,e_j,Ie_j)
        =8|\theta|^2+8d^*\theta-\tfrac43|H|^2,
    \end{equation}
    where we use the tensor norm
    \[
        |H|^2
        =\sum_{i,j,k=1}^{2n}H(e_i,e_j,e_k)^2.
    \]

    \item
    The Hermitian structure $(I,g)$ is called \emph{balanced}
    if $\theta=0$, and \emph{conformally balanced} if
    $\theta=df$ for a globally defined smooth function $f$.
    In the latter case, the metric $e^{-f/(n-1)}g$ is balanced.
    When $n=2$, the balanced condition is equivalent to the
    K\"ahler condition, and hence conformally balanced metrics
    are precisely conformally K\"ahler metrics.

    \item
    Our curvature convention is $\mathrm{R}^{{B,I}}(X,Y)Z =\nabla^{B,I}_X\nabla^{B,I}_YZ-\nabla^{B,I}_Y\nabla^{B,I}_XZ-\nabla^{B,I}_{[X,Y]}Z.$
    The Ricci tensor is defined by  \begin{equation}\label{eqn:Ricci_tensor}
        \Ric^{{B,I}}(X,Y)
        =\sum_{i=1}^{2n}
        \mathrm{R}^{B,I}(e_i,X,Y,e_i)=\Ric^{\nabla}-\tfrac14 H^2-\tfrac12 d^*H,
    \end{equation}
    where
    \[
        H^2(X,Y)
        =\sum_{i,j=1}^{2n}
        H(X,e_i,e_j)H(Y,e_i,e_j).
    \]
    A proof of \eqref{eqn:Ricci_tensor} can be found for instance in \cite[Proposition 3.18]{GFStr}.
    \item
    The Bismut Ricci form is defined by
    \begin{equation}\label{eqn:rho_B}
        \rho^{{B,I}}(X,Y)
        =\tfrac12\sum_{i=1}^{2n}
        \mathrm{R}^{B,I}(X,Y,Ie_i,e_i).
    \end{equation}
    When $\rho^{{B,I}}$ vanishes, the Hermitian structure $(I,g)$ is usually called \emph{Calabi-Yau with torsion} \cite{GGP}. In this case, $\mathrm{Hol}^0(\nabla^{B,I}) \subseteq \SU(n)$.
\end{itemize}
\section{Preliminaries and general setting}\label{Sec:preliminaries}

We recall the basic notions of generalized K\"ahler and
Bismut--Hermitian--Einstein (BHE) geometry and fix our notation.
Throughout, $M$ denotes a manifold of real dimension $2n$.
For background on generalized K\"ahler geometry, we refer to
\cite{Cav,Gualtieri1,Gualtieri2,Hit}; for BHE geometry, see
\cite{ABLS,ALL,ALLR,GFJS,GFStr}.

\subsection{Generalized K\"ahler structures} \label{subs:GK}

Generalized K\"ahler geometry first arose
in the study of $N=(2,2)$ supersymmetric sigma models
\cite{GHR,HP}. Gualtieri \cite{Gualtieri1,Gualtieri2}
subsequently recast this geometry within the framework introduced
by Hitchin \cite{Hit}. In this formulation, a generalized K\"ahler
structure consists of two commuting generalized complex structures
$\mathcal{I}$ and $\mathcal{J}$ on an exact Courant algebroid,
with $-\mathcal{I}\mathcal{J}$ a generalized metric.
We adopt the equivalent bi-Hermitian description established
in \cite[Theorem~2.18]{Gualtieri2}.

\begin{defn}
A \emph{generalized K\"ahler structure} on $M$ is a bi-Hermitian structure $(I,J,g)$
such that $(I,g)$ and $(J,g)$ are pluriclosed structures
whose fundamental forms satisfy
\[
    d^c_I\omega_I=-d^c_J\omega_J.
\]
\end{defn}

Throughout, we denote the fundamental forms by $\omega_I$
and $\omega_J$, and set $ H:=-d^c_I\omega_I=d^c_J\omega_J$.
Thus the Bismut connections $\nabla^{B,I}$ and $\nabla^{B,J}$ have closed
torsion forms $H$ and $-H$, respectively. Note that the generalized K\"ahler
condition is invariant under $J\mapsto -J$, since
$\omega_{-J}=-\omega_J$ and $d^c_{-J}=-d^c_J$: when $n$ is odd, exchanging $J$ with $-J$ swaps the orientations. \par

The relation between generalized Kähler geometry and Poisson geometry is encoded by three natural
bivectors. The first is determined by the commutator of $I$ and $J$. 

\begin{prop}[\cite{AGG,Hit, Pont}]
\label{prop:commutator-poisson}
Let $(I,J,g)$ be a generalized K\"ahler structure. The bivector $\sigma:=\tfrac12[I,J]g^{-1}$
is real Poisson and has type $(2,0)+(0,2)$ with respect to both $I$ and $J$. Its $(2,0)$ component with respect to either complex structure is a holomorphic Poisson tensor.
\end{prop}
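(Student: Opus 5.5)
The plan is to work with a local $g$-orthonormal frame and use only two inputs: the bi-Hermitian relations for $I$, $J$, $g$, and the torsion relation $d^c_I\omega_I=-d^c_J\omega_J=-H$, equivalently $\nabla^{B,I}I=0$ and $\nabla^{B,J}J=0$ with $\nabla^{B,I}=\nabla+\tfrac12 g^{-1}H$ and $\nabla^{B,J}=\nabla-\tfrac12 g^{-1}H$. The type statement and reality come first, and they are algebraic. Since $I$ and $J$ are $g$-orthogonal, $Ig^{-1}$ and $Jg^{-1}$ are skew bivectors, so $\sigma=\tfrac12(IJ-JI)g^{-1}$ is a real skew bivector. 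Using $I^2=-1$ we get $I[I,J]=-[I,J]I$. Combined with $g$-orthogonality of $I$, this says $\sigma(I\xi,I\eta)=-\sigma(\xi,\eta)$ for covectors $\xi,\eta$, which is exactly type $(2,0)+(0,2)$ with respect to $I$. The argument with $J$ is symmetric, since $[I,J]=-[J,I]$.

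Next comes holomorphicity of $\sigma^{2,0}_I=\tfrac12(\sigma-iI\sigma)$. Holomorphicity of a $(2,0)$-bivector is equivalent to $\bar\partial\sigma^{2,0}=0$. I would test this with a torsion-free connection that is not $I$-parallel. Concretely, I would compute $\mathcal L_{X^{0,1}}$, or equivalently express $\bar\partial$ through the Bismut connection $\nabla^{B,I}$, which preserves $I$ and $g$. For $(p,0)$-vectors, $\bar\partial$ is obtained by $\nabla^{B,I}_{X^{0,1}}$ plus a torsion correction. That correction vanishes because $H$ is totally skew of type $(2,1)+(1,2)$ (Bismut's characterization). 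So it suffices to show $(\nabla^{B,I}_{X^{0,1}}\sigma)^{2,0}=0$. Since $\nabla^{B,I}I=0$, we have $\nabla^{B,I}\sigma=\tfrac12[I,\nabla^{B,I}J]g^{-1}$. Writing $\nabla^{B,I}J=\nabla^{B,J}J+[g^{-1}H,J]=[g^{-1}H(X),J]$, everything reduces to an algebraic identity in $H$, $I$, $J$. That identity should follow from $H$ being of type $(2,1)+(1,2)$ for both $I$ and $J$ (again from $dd^c$ conventions). I expect this bookkeeping to be the main obstacle. The cleanest route is to decompose $H(X)$ into $\pm i$ eigenspaces of $I$ and check that the $(0,1)$-direction component of $[I,[H(X),J]]$ has no $(2,0)$-part. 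If it gets messy, I would fall back on Hitchin's trick described below.

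The Poisson property is last. For a real bivector of type $(2,0)+(0,2)$, $[\sigma,\sigma]=0$ is equivalent to $[\sigma^{2,0},\sigma^{2,0}]=0$ once $\sigma^{2,0}$ is holomorphic. Here I would follow Hitchin. The generalized complex structures $\mathcal I,\mathcal J$ have Poisson components $\tfrac12(\omega_I^{-1}\mp\omega_J^{-1})$, and integrability of a generalized complex structure forces its $TM^*\to TM$ block to be Poisson. Hence $\omega_I^{-1}-\omega_J^{-1}$ and $\omega_I^{-1}+\omega_J^{-1}$ are Poisson. Then one shows that $\sigma=\tfrac12[I,J]g^{-1}$ is (up to sign) $\omega_I^{-1}\omega_J\omega_I^{-1}$-type combinations whose bracket is controlled by these. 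Alternatively, one uses that $\sigma^{2,0}$ is holomorphic, so $[\sigma^{2,0},\sigma^{2,0}]$ is a holomorphic trivector. Its vanishing can then be checked on the dense open set where $I\pm J$ is invertible, where $\sigma$ is expressed through the symplectic forms $g(I\pm J)^{-1}$ (Apostolov--Gauduchon--Grantcharov), and it extends by continuity. Where $I=\pm J$ on open sets, $\sigma=0$ trivially.

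Finally, the $J$ statement follows by swapping the roles of $I$ and $J$, replacing $H$ by $-H$. This changes $\sigma$ to $-\sigma$, which does not affect any of the conclusions.
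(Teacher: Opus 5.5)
\textbf{Verdict.} There is a genuine gap in the holomorphicity step, and the Poisson step is not actually argued. The paper gives no proof of this proposition; it quotes it from \cite{AGG,Hit,Pont}. So your sketch has to stand on its own. The algebraic part is fine: reality, skewness, and type $(2,0)+(0,2)$ via $I[I,J]=-[I,J]I$.

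\textbf{Holomorphicity rests on a false claim.} The connection whose $(0,1)$-part computes $\bar\partial$ on $T^{1,0}$ is the Chern connection, not the Bismut connection. Write $g(T_XY,Z)=H(X,Y,Z)$. The torsion of $\nabla^{B,I}$ gives
$\bar\partial_{\bar X}Z=[\bar X,Z]^{1,0}=\nabla^{B,I}_{\bar X}Z-(T_{\bar X}Z)^{1,0}$.
The correction pairs with $\bar W\in T^{0,1}$ as $H(\bar X,Z,\bar W)$. This is the $(1,2)$-part of $H$, which vanishes only when $H=0$; type $(2,1)+(1,2)$ is exactly what makes it nonzero. So the identity you reduce to, $\nabla^{B,I}_{\bar X}\sigma^{2,0}=0$, is not equivalent to holomorphicity, and it is actually false:
\begin{itemize}
\item Take the Hopf surface $U(2)$ with bi-invariant metric, $I$ left-invariant and $J$ right-invariant, as in the paper's Example. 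Let $T$ be central, $[E_1,E_2]=E_3$ cyclically, $IT=E_1$, $IE_2=E_3$.
\item $\nabla^{B,I}$ parallelizes the left-invariant fields $Z_1=T-iE_1$ and $Z_2=E_2-iE_3$. Yet $[\bar Z_1,Z_1]=0$ and $[\bar Z_1,Z_2]=-Z_2$.
\item Write $\sigma^{2,0}=hZ_1\wedge Z_2$. Holomorphicity forces $\bar Z_1(h)=h$, so $\nabla^{B,I}_{\bar Z_1}\sigma^{2,0}=hZ_1\wedge Z_2\neq0$ wherever $\sigma\neq0$.
\end{itemize}

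\textbf{How to repair it.} Keep the correction term. For $\bar X\in T^{0,1}$ one has $\nabla^{Ch}_{\bar X}=\nabla^{B,I}_{\bar X}-T^+_{\bar X}$, where $T^\pm_{\bar X}$ are the parts of $T_{\bar X}$ commuting and anticommuting with $I$. Set $A=[I,J]$ and use your formula $\nabla^{B,I}_{\bar X}J=[T_{\bar X},J]$. The Jacobi identity gives
$\nabla^{Ch}_{\bar X}A=2[IT^-_{\bar X},J]+[T^-_{\bar X},A]$.
Since $H^{0,3}=0$, $T^-_{\bar X}$ kills $T^{0,1}$ and has image in $T^{0,1}$. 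Hence the right-hand side maps $T^{0,1}$ into $T^{0,1}$. The left-hand side anticommutes with $I$, so it maps $T^{0,1}$ into $T^{1,0}$. Both therefore vanish on $T^{0,1}$. Because $\sigma^{2,0}=\tfrac12Ag^{-1}|_{\Lambda^{1,0}}$ and $g^{-1}\Lambda^{1,0}=T^{0,1}$, this is exactly $\bar\partial\sigma^{2,0}=0$.

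\textbf{The Poisson step names no mechanism.} Your phrase ``$\omega_I^{-1}\omega_J\omega_I^{-1}$-type combinations whose bracket is controlled by these'' does not say how the bracket of $\sigma$ is controlled. Indeed $\sigma$ is not a linear combination of $\pi_\pm$; it is the composite $\sigma=-2\pi_+g\pi_-$. The fallback also fails on two counts:
\begin{itemize}
\item $U_+\cup U_-$ need not be dense. On a product of a K\"ahler factor with $J=I$, a K\"ahler factor with $J=-I$, and a Hopf factor, $I\pm J$ is nowhere invertible while $\sigma\not\equiv0$.
\item Even on $U_+$ you give no reason why symplecticity of $F_+$ forces $[\sigma^{2,0},\sigma^{2,0}]=0$.
\end{itemize}

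\textbf{The ingredient you already invoke suffices, once holomorphicity is proved.} Since $Ig^{-1}$ has type $(1,1)$, the $(2,0)$-part of $\pi_-=\tfrac12(I-J)g^{-1}$ is $\tfrac{i}{2}\sigma^{2,0}$. In the $(3,0)$-component of $[\pi_-,\pi_-]$, the only contribution besides $[\pi_-^{2,0},\pi_-^{2,0}]$ is $\pi_-^{1,1}$ contracted with $\bar\partial\pi_-^{2,0}$. Hence $0=[\pi_-,\pi_-]^{3,0}=-\tfrac14[\sigma^{2,0},\sigma^{2,0}]$, and your own type reduction then gives $[\sigma,\sigma]=0$.

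In complex dimension two, the setting of \cite{AGG,Pont}, this step is vacuous because $\Lambda^3T^{1,0}=0$.
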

For any $p \in M$, we write
\[
    \operatorname{Im}\sigma_p
    :=\{\sigma_p(\alpha):\alpha\in T_p^*M\}=\operatorname{Im} \, [I,J]_p.
\]
The union of these spaces defines a singular distribution invariant under both $I$ and $J$. Let $\sigma^{2,0}$ be the $I$-holomorphic Poisson tensor induced by $\sigma$. At each point, 
\[
    \dim_{\mathbb R}(\operatorname{Im}\sigma_p)
    =2\dim_{\mathbb C}(\operatorname{Im}\sigma_{p}^{2,0}).
\]
Since the complex rank of a skew-symmetric tensor is even, the dimension of $\operatorname{Im}\sigma_p$ is divisible by four. In particular, in real dimension six, $\operatorname{Im}\sigma$ is a smooth distribution of rank four on $\{\sigma\neq 0\}$.

The other two Poisson tensors, firstly defined in \cite{LZ}, are
\begin{equation}\label{eq:poisson-plus-minus}
    \pi_\pm:=\tfrac12(I\pm J)g^{-1}.
\end{equation}
On the open sets $U_\pm:=\{p\in M:(I\pm J)_p\text{ is invertible}\}$, define $F_\pm:= - \pi_\pm^{-1}$. More precisely, $F_\pm (X,Y):=-2g((I\pm J)^{-1}X,Y)$ are symplectic forms tamed by $I$ \cite{ASU2}. 
We denote the corresponding symplectic volume forms by
\begin{equation} \label{eqn:symplectic_volume}
  \nu_\pm:=\frac{F_\pm^n}{n!}.   
\end{equation}
\begin{rmk}\label{rmk:symplectic-volume}
On the open set $U_+$, let $F_+=-\pi_+^{-1}$ and $dV_g=\frac{\omega_I^n}{n!}$. Then $F_+$ induces the complex orientation of $I$: this is a consequence of the fact that $F_+$ is tamed by $I$. Therefore, its symplectic volume form satisfies
\begin{equation}\label{eq:symplectic-volume}
    \nu_+:=\frac{F_+^n}{n!}
    =\frac{2^n}{\sqrt{\det(I+J)}}\,dV_g=2^n \, e^{-\psi_+} dV_g,
\end{equation}
where $\psi_+=\tfrac 12 \log (\det(I+J))$.
\end{rmk}
\begin{defn}\label{def:poisson-divergence}
Let $\mu$ be a volume form on $M$ and $\pi$ a bivector.
The \emph{divergence of $\pi$ with respect to $\mu$} is the
unique vector field $X=\operatorname{div}_\mu\pi$ satisfying
\begin{equation}\label{eq:poisson-divergence-definition}
    \iota_{X}\mu
    =d(\iota_\pi\mu).
\end{equation}
\end{defn}
\begin{claim}\label{claim:wedge_contraction}
Assume that $M$ is an oriented manifold with volume form $\mathrm{Vol}$.
For any bivector $\pi$ and any $1$-form $\alpha$: 
\begin{equation*}
\alpha \wedge \iota_\pi \vol= -\iota_{\pi(\alpha)} \vol.
\end{equation*}
\end{claim}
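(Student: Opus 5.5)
The identity is pointwise and $C^\infty$-linear in both $\alpha$ and $\pi$, so it suffices to verify it at a single point for decomposable bivectors $\pi=X\wedge Y$; the general case then follows by linearity, since every bivector is locally a finite sum of such terms. The first task is to fix conventions consistently with Definition~\ref{def:poisson-divergence}. I take $\iota_{X\wedge Y}\vol:=\iota_Y\iota_X\vol$, so that $(\iota_{X\wedge Y}\vol)(Z_1,\dots)=\vol(X,Y,Z_1,\dots)$. I take $\pi(\alpha)$ to be contraction in the first slot, $\pi(\alpha)=\pi(\alpha,\cdot)$, which gives $(X\wedge Y)(\alpha)=\alpha(X)Y-\alpha(Y)X$. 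If the paper's convention for $\pi(\alpha)$ is contraction in the other slot, the same computation applies with the sign bookkeeping adjusted, and the claimed sign pins down which convention is intended.

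The key tool is the graded Leibniz rule for interior products: for a $1$-form $\alpha$ and any form $\beta$,
\[
\iota_Z(\alpha\wedge\beta)=\alpha(Z)\beta-\alpha\wedge\iota_Z\beta .
\]
Step one uses the fact that $\alpha\wedge\vol=0$ for degree reasons. Applying $\iota_X$ gives $0=\alpha(X)\vol-\alpha\wedge\iota_X\vol$, that is,
\[
\alpha\wedge\iota_X\vol=\alpha(X)\vol .
\]
This is already the analogue of the claim for vector fields. Step two applies $\iota_Y$ to this identity:
\[
\alpha(Y)\,\iota_X\vol-\alpha\wedge\iota_Y\iota_X\vol=\alpha(X)\,\iota_Y\vol .
\]
Solving for the middle term yields
\[
\alpha\wedge\iota_{X\wedge Y}\vol=\alpha(Y)\iota_X\vol-\alpha(X)\iota_Y\vol=-\iota_{\alpha(X)Y-\alpha(Y)X}\vol=-\iota_{\pi(\alpha)}\vol ,
\]
which is the claim for $\pi=X\wedge Y$.

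The computation itself is routine. The only real obstacle is sign consistency: the conventions for $\iota_\pi$ and for $\pi(\alpha)$ must be the ones used in Definition~\ref{def:poisson-divergence} and in $\sigma=\tfrac12[I,J]g^{-1}$. I would check these conventions once, on $\mathbb R^2$ with $\pi=e_1\wedge e_2$, $\vol=e^1\wedge e^2$ and $\alpha=e^1$. In that case $\iota_\pi\vol=1$, so the left side is $e^1$. On the right, $\pi(\alpha)=e_2$, so $-\iota_{e_2}\vol=e^1$. The two sides agree, which confirms the choice of conventions above.
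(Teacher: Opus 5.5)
Your proof is correct and is essentially the paper's argument. The paper also reduces by bilinearity to $\pi=X\wedge Y$, with $\iota_\pi=\iota_Y\iota_X$ and $\pi(\alpha)=\alpha(X)Y-\alpha(Y)X$ (the same conventions you chose), and expands $0=\iota_Y\iota_X(\alpha\wedge\vol)$ with the graded Leibniz rule in one step, where you do it in two.
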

\begin{proof}
By bilinearity, we may assume that $\pi=X \wedge Y$. Then 
\[
\pi(\alpha)= X \wedge Y (\alpha)= \alpha(X)\, Y - \alpha(Y) \, X.
\]
Therefore 
\[
\iota_{\pi(\alpha)} \vol= \alpha(X)\, \iota_Y \vol  - \alpha(Y) \, \iota_X \vol.
\]
On the other hand,
\begin{align} 
0= \iota_\pi (\alpha \wedge \vol)= \iota_{Y} \iota_X(\alpha \wedge \vol)=\alpha(X) \iota_Y\vol - \alpha(Y) \iota_X\vol + \alpha \wedge \iota_\pi \vol.
\end{align}
The claim follows by these identities.
\end{proof}
The first identity in the following lemma is the computation underlying
\cite[Lemma~3.6]{ASU}. We include a proof with our sign conventions.
\begin{lem}\label{lem:divergence}
Let $(M^{2n},I,J,g)$ be a generalized K\"ahler manifold. Assume that $I$ and $J$ define the same orientation and let $dV_g=\frac{\omega_I^n}{n!}= \frac{\omega_J^n}{n!}$. For any $f\in C^\infty(M,\mathbb R)$, set
$\mu_f:=e^{-f}dV_g$. Then
\begin{equation}\label{eq:divergence}
    \operatorname{div}_{\mu_f}\pi_+
    =-\tfrac12\left(
        I\bigl((\theta_I-df)^\sharp\bigr)
        + J\bigl((\theta_J-df)^\sharp\bigr)
    \right).
\end{equation}
Furthermore, on $U_+$
\begin{equation}\label{eq:divergence_mu_nu}
    \operatorname{div}_{\mu_f}\pi_+
    = \pi_+ (df -d \psi_+),
\end{equation}
where $\psi_+=\tfrac12 \log(\det(I+J))$. 
\end{lem}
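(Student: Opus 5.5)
By linearity, $\pi_+=\tfrac12(Ig^{-1}+Jg^{-1})$ and $\operatorname{div}_{\mu_f}$ is additive in the bivector. So we may treat the two summands separately, and it suffices to prove, for a single Hermitian structure $(I,g)$,
\[
\operatorname{div}_{\mu_f}(Ig^{-1})=-I\bigl((\theta_I-df)^\sharp\bigr).
\]
The same computation then applies verbatim to $J$, since $dV_g=\omega_J^n/n!$ by the equal-orientation hypothesis.

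The first step is to compute $\iota_{Ig^{-1}}dV_g$. The bivector $Ig^{-1}$ is, up to a sign fixed by the conventions, the dual of $\omega_I$, namely $\pm\sum_i e_i\wedge Ie_i/2$. Contracting $\omega^n/n!$ with the dual bivector gives $\pm\omega^{n-1}/(n-1)!$. I will pin the sign by a check in $\mathbb C$ with $n=1$: there $Ig^{-1}(dx)=I(\partial_x)=\partial_y$, so $Ig^{-1}=\partial_x\wedge\partial_y$ (using the claim's convention $\pi(\alpha)=\alpha(X)Y-\alpha(Y)X$, adjusted accordingly). This yields $\iota_{Ig^{-1}}dV_g=\epsilon\,\omega^{n-1}/(n-1)!$ with a definite $\epsilon=\pm1$.

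The second step is to differentiate:
\[
d\bigl(e^{-f}\iota_{Ig^{-1}}dV_g\bigr)=\epsilon\,e^{-f}(\theta_I-df)\wedge\frac{\omega^{n-1}}{(n-1)!},
\]
using the defining relation $d\omega^{n-1}=\theta\wedge\omega^{n-1}$ of the Lee form. Rewriting $\omega^{n-1}/(n-1)!$ as $\epsilon\,\iota_{Ig^{-1}}dV_g$ and applying Claim~\ref{claim:wedge_contraction}, we get
\[
(\theta-df)\wedge\iota_{Ig^{-1}}\mu_f=-\iota_{Ig^{-1}(\theta-df)}\mu_f .
\]
Hence $\operatorname{div}_{\mu_f}(Ig^{-1})=-Ig^{-1}(\theta-df)=-I((\theta-df)^\sharp)$; note that $\epsilon^2=1$, so the sign $\epsilon$ drops out. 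Summing over $I$ and $J$ with the factor $\tfrac12$ gives \eqref{eq:divergence}.

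For \eqref{eq:divergence_mu_nu}, we work on $U_+$, where $\pi_+$ is nondegenerate with inverse $-F_+$. For a symplectic form $F$ with Liouville volume $\nu$ and Poisson bivector $\pi=-F^{-1}$, the contraction $\iota_\pi\nu$ is a constant multiple of $F^{n-1}/(n-1)!$, which is closed. Hence $\operatorname{div}_\nu\pi_+=0$.

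Next, $\mu_f=e^{-f}dV_g=2^{-n}e^{\psi_+-f}\nu_+$ by Remark~\ref{rmk:symplectic-volume}. Writing $\mu_f=e^{h}\nu_+$ with $h=\psi_+-f-n\log 2$, we have
\[
d(e^h\iota_{\pi_+}\nu_+)=e^h\,dh\wedge\iota_{\pi_+}\nu_+=-\iota_{\pi_+(dh)}\mu_f,
\]
again by Claim~\ref{claim:wedge_contraction}. Therefore $\operatorname{div}_{\mu_f}\pi_+=-\pi_+(dh)=\pi_+(df-d\psi_+)$.

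The main obstacle is purely bookkeeping of signs: the convention relating $Ig^{-1}$ to $\omega$, and the sign in $\iota_\pi(\omega^n/n!)$. As argued above, these signs cancel in pairs, and I will verify this with the one-dimensional model before trusting the general computation.
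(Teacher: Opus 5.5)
Your proposal is correct and follows essentially the same route as the paper. Both arguments reduce by linearity to $Kg^{-1}$ for $K\in\{I,J\}$, identify $\iota_{Kg^{-1}}dV_g$ with $\omega_K^{[n-1]}$, and differentiate using the Lee form and Claim~\ref{claim:wedge_contraction}. For the second identity, both write $\mu_f$ as $2^{-n}e^{\psi_+-f}\nu_+$ and use $d\iota_{\pi_+}\nu_+=0$. The one small difference is that the paper pins down $\iota_{Kg^{-1}}dV_g=\omega_K^{[n-1]}$ exactly, whereas your observation that the sign $\epsilon$ cancels makes that sign check unnecessary.
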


\begin{proof}
Fix $K\in\{I,J\}$ and write
$\omega_K^{[r]}:=\tfrac{\omega_K^r}{r!}$. For every $1$-form $\alpha$, since 
$\iota_{K\alpha^\sharp}\omega_K=-\alpha$, we have
\begin{equation}\label{eq:one-form-hermitian-volume}
    \alpha\wedge\omega_K^{[n-1]}
    =-\iota_{K\alpha^\sharp}dV_g.
\end{equation}
Equation \eqref{eq:one-form-hermitian-volume} and Claim \ref{claim:wedge_contraction} imply that for each $1$-form $\alpha$, 
\begin{equation}\label{eq:contraction-inverse-hermitian-form}
\alpha\wedge\omega_K^{[n-1]}=-\iota_{K\alpha^\sharp}dV_g=\alpha \wedge \iota_{Kg^{-1}} dV_g,
\end{equation}
and therefore $\iota_{Kg^{-1}} dV_g=\omega_K^{[n-1]}$.   
 
Using \eqref{eq:contraction-inverse-hermitian-form}, the definition of the Lee form and \eqref{eq:one-form-hermitian-volume}, we obtain
\[
\begin{aligned}
    d\bigl(\iota_{Kg^{-1}}\mu_{f}\bigr)
    &=d\bigl(e^{-f}\omega_K^{[n-1]}\bigr)\\
    &=e^{-f}(\theta_K-df)\wedge\omega_K^{[n-1]}\\
    &=-\iota_{K((\theta_K-df)^\sharp)}\mu_{f}.
\end{aligned}
\]
Thus
$\operatorname{div}_{\mu_f}(Kg^{-1})
=-K((\theta_K-df)^\sharp)$.
The first identity follows by linearity and
\eqref{eq:poisson-plus-minus}.

We now establish the second equality. Set $\rho:= \frac{\mu_f}{\nu_+} \in C^\infty(U_+)$, where $\nu_+$ is defined in Equation \eqref{eq:symplectic-volume}. Then $\log\rho=\log \tfrac{1}{2^{n}}+ \psi_+-f$, and hence $d\log \rho=d\psi_+-df$. We compute $d \iota_{\pi_+}\mu_f$. 
\begin{align*}
d \iota_{\pi_+}\mu_f &= d \iota_{\pi_+} (\rho\, \nu_+)\\
                   &= d\rho \wedge \iota_{\pi_+} \nu_+ + \rho \, d      \iota_{\pi_+} \nu_+ \\
                   &= d\rho \wedge \iota_{\pi_+} \nu_+\\
                   &=- \iota_{\pi_{+} (d\rho)} \nu_+\\
                   &=-\tfrac{1}{\rho} \iota_{\pi_{+} (d\rho)} \mu_f \\
                   &= \iota_{-\pi_{+} (d\log\rho)} \mu_f, 
\end{align*}
where in the third equality we used that $d\iota_{\pi_+} \nu_+=0$ since $F_+$ is symplectic, and the fourth equality follows by Claim \ref{claim:wedge_contraction}. By the definition of divergence, $\operatorname{div}_{\mu_f}\pi_+=-\pi_{+} (d\log\rho)=\pi_+(df-d\psi_+)$.
\end{proof}
\subsection{Bismut--Hermitian--Einstein structures}
\label{subsec:BHE-structures}
Bismut--Hermitian--Einstein structures, introduced in
\cite{GFJS,GFStr}, provide a natural non-K\"ahler counterpart of
Ricci-flat K\"ahler geometry. On compact manifolds, the theory is
particularly rigid in low dimensions: the surface case was
classified in \cite{GI}, the structure theory in real dimension six
was developed in \cite{ABLS}, and further topological restrictions
were established in \cite{Steets}.

\begin{defn}
Let $(M^{2n},I,g)$ be a pluriclosed Hermitian manifold.
\begin{enumerate}
    \item
    The Hermitian structure $(I,g)$ is called
    \emph{Bismut--Hermitian--Einstein}, or \emph{BHE}, if
    $\rho^{B,I}=0.$

    \item
    The Hermitian structure $(I,g)$ is called a \emph{ gradient
    pluriclosed soliton} if there exists a smooth function $f$ such that
    \begin{equation}\label{eq:-pluriclosed-soliton}
        \Ric^{\nabla}-\tfrac14 H^2+\nabla^2f=0,
        \qquad
        d^*H+\iota_{\grad f}H=0.
    \end{equation}
    When \(M\) is compact connected, the soliton potential \(f\) is unique up to an additive constant. We henceforth fix this ambiguity by requiring $\int_M e^{-f}\,dV_g=1,$ and call the resulting function the \emph{normalized soliton potential}.

    \item 
    A generalized K\"ahler structure $(g, I, J)$ is a \emph{generalized K\"ahler-Ricci soliton} if $(I,g)$ is a gradient
    pluriclosed soliton. In particular, also $(J,g)$ is a gradient
    pluriclosed soliton with the same function $f$. 
\end{enumerate}
\end{defn}
We prove the following Claim, which will be useful in Section \ref{section:converse}. 
\begin{claim} \label{claim:GRS}
Let $(M,I,J,g)$ be a generalized K\"ahler manifold, and suppose that
$(I,g,f)$ is a gradient pluriclosed soliton. Assume that
\[
\theta_I=df+2\eta,\qquad \theta_J=df+2c\eta,
\]
where $c\in\R$ and $\nabla \eta=\nabla^{B,I}\eta=0$. Then
$\rho^{B,J}=0$.
\end{claim}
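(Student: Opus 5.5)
The plan is to reduce $\rho^{B,J}=0$ to the closedness of a single $1$-form, using the standard characterization of gradient pluriclosed solitons through the Bismut Ricci form. For a pluriclosed structure $(K,g)$, the Bismut Ricci form and the Bismut Ricci tensor are related by (cf.\ \cite{GFStr})
\[
\Ric^{B,K}=\Ric^\nabla-\tfrac14H^2-\tfrac12d^*H,
\]
with the $(1,1)$ and $(2,0)+(0,2)$ parts of $\rho^{B,K}$ controlled by the symmetric part of $\Ric^{B,K}$ and by $d(K\theta_K)$. From this one obtains the equivalence
\[
(K,g,f)\ \text{is a gradient pluriclosed soliton}
\iff
\rho^{B,K}=\tfrac12\, d\bigl(K(\theta_K-df)\bigr),
\]
up to the sign fixed by our conventions for $I$ acting on forms. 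I will first recall and check this identity with the conventions of Section 2, using \eqref{eqn:Ricci_tensor} and \eqref{eqn:Lee_form}.

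The second step applies this to $K=J$. This is allowed because, by the definition of a generalized K\"ahler--Ricci soliton, $(J,g)$ is a gradient pluriclosed soliton with the same $f$ and with torsion $-H$. Inserting $\theta_J-df=2c\eta$ gives
\[
\rho^{B,J}=c\,d(J\eta),
\]
so it remains to show that $J\eta$ is closed.

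For the third step I will express $d(J\eta)$ through the Bismut connection of $J$. Since $\nabla^{B,J}$ has torsion $-H$, we have
\[
d\alpha(X,Y)=(\nabla^{B,J}_X\alpha)(Y)-(\nabla^{B,J}_Y\alpha)(X)\pm H(X,Y,\alpha^\sharp).
\]
The hypotheses $\nabla\eta=\nabla^{B,I}\eta=0$ together with \eqref{eqn:bismut} give $\iota_{\eta^\sharp}H=0$. Hence
\[
\nabla^{B,J}\eta=\nabla\eta+\tfrac12\iota_{\eta^\sharp}H=0,
\]
and since $\nabla^{B,J}J=0$ we also get $\nabla^{B,J}(J\eta)=0$. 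Therefore
\[
d(J\eta)=\pm\,\iota_{J\eta^\sharp}H,
\]
and the claim reduces to proving $\iota_{J\eta^\sharp}H=0$.

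This last vanishing is the main obstacle. My first attempt is to use the Lee form identity \eqref{eqn:Lee_form} for both $I$ and $J$, which links the traces of $H$ along $I$ and $J$ to $\theta_I$ and $\theta_J$, combined with $\iota_{\eta^\sharp}H=0$ and $dH=0$. Since $\eta$ is parallel, $\mathcal L_{\eta^\sharp}H=0$. Applying the soliton equation $d^*H+\iota_{\grad f}H=0$ for both structures then gives $\iota_{\theta_I^\sharp-\grad f}H=0$. If that route fails, I would instead run the same argument for $I$, where it gives $\rho^{B,I}=d(I\eta)$, and compare the two. The point is that $\iota_{J\eta^\sharp}H$ is a $J$-invariant-type contraction, and, if one can show that $J\eta^\sharp$ is a Killing field preserving $H$, then $\iota_{J\eta^\sharp}H$ is closed and coclosed with respect to the twisted Laplacian, so a weighted integration by parts against $e^{-f}dV_g$ should force it to vanish.
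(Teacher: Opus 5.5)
There is a genuine gap, and it is in your first step. The identity $\rho^{B,K}=\tfrac12\,d\bigl(K(\theta_K-df)\bigr)$, with either sign, is false for gradient pluriclosed solitons with $H\neq0$. It is the K\"ahler formula $\rho=-\tfrac12dd^cf$ with the torsion terms dropped. Any compact non-K\"ahler BHE manifold refutes it. There $\theta_I-df=2\eta$ with $\eta=V_I^\flat$ and $I\eta=-\eta_I$, so your formula would give $\rho^{B,I}=\mp d\eta_I$. By \eqref{eq:BHE-curvature-traces} this has $\omega^T$-trace $\pm4$, whereas $\rho^{B,I}=0$.

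For the same reason, the target you reduce to, $d(J\eta)=0$ (equivalently $\iota_{J\eta^\sharp}H=0$), is false exactly where the claim is needed. Take the paper's example on $H_q\times E$ with $J=J_1$. Every hypothesis holds: $f$ is constant, $c=1$, and $\eta=-T^\flat$ is parallel with $\iota_TH=0$. Indeed $\rho^{B,J_1}=0$. Yet $J_1\eta=(J_1T)^\flat$ is dual to a unit Killing field tangent to the $S^3$ factor. Such a form cannot be closed: a closed Killing $1$-form is parallel, and the round $S^3$ carries no nonzero parallel field. So neither the Lee-form traces nor a weighted integration by parts can rescue this route. The latter would in any case need compactness, which is unavailable on $M_+$ in Theorem~\ref{thm:local_converse}.

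What you need instead is the identity in which $\theta_J-df$ enters through its Bismut covariant derivative, not through $d(J\,\cdot)$. For a gradient pluriclosed soliton $(J,g,f)$ one has $\rho^{B,J}\circ J=\nabla^{B,J}(\theta_J-df)$, up to the sign fixed by conventions; this is \cite[Proposition 2.17]{Lee1}. It follows from two facts:
\begin{itemize}
\item the pluriclosed identity relating $\rho^{B,J}\circ J$ to $\Ric^{B,J}+\nabla^{B,J}\theta_J$;
\item the symmetric and skew-symmetric parts of $\Ric^{B,J}+\nabla^{B,J}df$ are exactly the two equations in \eqref{eq:-pluriclosed-soliton}, with $H$ replaced by $-H$.
\end{itemize}
With this identity, the computation from your third step finishes the proof at once. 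You correctly derived $\nabla^{B,J}\eta=0$ from $\nabla\eta=0$ and $\iota_{\eta^\sharp}H=0$. Then $\rho^{B,J}\circ J=2c\,\nabla^{B,J}\eta=0$, and $J$ is invertible. This is precisely the paper's argument, and the torsion term $\iota_{J\eta^\sharp}H$ that blocked you never appears.
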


\begin{proof}
The generalized K\"ahler condition implies that $(g,J,f)$ is also a gradient
 pluriclosed soliton. Since $\nabla \eta=\nabla^{B,I}\eta=0$ and the two Bismut torsions satisfy $H_J=-H$, 
$\nabla^{B,J}\eta=0$. Indeed, $\nabla=\nabla^{B,I}+\nabla^{B,J}$. Thus, by \cite[Proposition 2.17]{Lee1},
\[
\rho^{B,J}\circ J
=\nabla^{B,J}(\theta_J-df)
=2c\,\nabla^{B,J}\eta
=0.
\]
As $J$ is invertible, $\rho^{B,J}=0$.
\end{proof}
Note that the claim can be easily stated for generalized K\"ahler orbifold, by working in a uniformizing chart.

\medskip
Let $(M^{2n},I,g)$ be a compact BHE manifold. By
\cite[Proposition~2.6]{GFJS}, it is a  gradient pluriclosed
soliton. Denote by $f$ its normalized soliton potential and set
\begin{equation}\label{eq:canonical-BHE-vector-field}
    V_I:=\tfrac12\bigl(\theta_I^\sharp-\grad f\bigr).
\end{equation}

The structure results of \cite{ABLS,GFJS} give
\begin{equation}\label{eq:BHE-parallel-fields}
    \nabla^{B,I} V_I=0,
    \qquad
    \mathcal{L}_{V_I} I=0.
\end{equation}
In particular, $V_I$ and $I V_I$ are commuting real holomorphic
Killing vector fields of constant norm. Moreover, the soliton
potential is invariant under both vector fields, that is,
$V_I(f)=I V_I(f)=0$ \cite[Lemma~2.9]{ABLS}. Finally, $V_I$ vanishes
if and only if $(I,g)$ is K\"ahler. This was first proved in
\cite[Proposition~2.6]{GFJS}; a shorter proof was subsequently given
in \cite[Proposition~2.6]{ABLS}. In this case, the BHE condition
reduces to the Ricci-flat K\"ahler condition.

Suppose henceforth that $(I,g)$ is non-K\"ahler. We use the
normalization
\begin{equation}\label{eq:BHE-unit-normalization}
    |V_I|=|I V_I|=1.
\end{equation}
Then $\mathcal F_I:=\operatorname{span}\{V_I,I V_I\}$ defines a
rank-two holomorphic Riemannian foliation. Unless otherwise stated,
\emph{basic} will mean basic with respect to the foliation
$\mathcal{F}_I$. Introduce $\eta:=V_I^\flat,$ and
$\eta_I:=(I V_I)^\flat=-I\eta$. The pair $(\eta,\eta_I)$ defines an invariant connection for the foliation. Its
curvature forms $d\eta$ and $d\eta_I$ are basic and $I$-invariant
\cite[Lemma~2.7]{ABLS}. Thus every compact non-K\"ahler BHE manifold
is locally a $\R^2$-bundle over a Hermitian manifold of real
dimension $2n-2$.

Let $\mathcal H_I:=\ker\eta\cap\ker\eta_I=\mathcal F_I^\perp$ be the
horizontal distribution. The metric and the fundamental form decompose
as
\begin{equation}\label{eq:BHE-transverse-decomposition}
    g
    =\eta\otimes\eta
     +\eta_I\otimes\eta_I
     +g^T,
    \qquad
    \omega
    =\eta\wedge\eta_I+\omega^T.
\end{equation}
The tensors $g^T$, $I^T:=I|_{\mathcal H_I}$, and $\omega^T$ are
basic and define a transverse Hermitian structure. The torsion $H$ has the following expression
\begin{equation} \label{eqn:torsionH}
   H= \eta \wedge d\eta + \eta_I \wedge d\eta_I+ H^T 
\end{equation}
with $H^T$ being the Bismut torsion of the transverse Hermitian structure. 
\begin{prop}[cf. {\cite[Proposition~2.10]{ABLS}}]
\label{prop:compact-BHE-transverse-equations}
Given $(M,I,g)$ a compact (non K\"ahler) BHE manifold, the
transverse geometry satisfies
\begin{align}
    &\theta_I^T=df,
    \label{eq:BHE-transverse-Lee}\\
    &\operatorname{tr}_{\omega^T}d \eta=0,
    \\
    &\operatorname{tr}_{\omega^T}d \eta_I=-4,
    \label{eq:BHE-curvature-traces}\\
    &dd_{I^T}^c\omega^T
    =d \eta \wedge d \eta
      +d \eta_I \wedge d \eta_I .
    \label{eq:BHE-anomaly-equation}
\end{align}
Moreover,
\begin{equation}\label{eq:BHE-transverse-Ricci}
    d\eta_I=-\tfrac12\rho^{B,I^T}.
\end{equation}
\end{prop}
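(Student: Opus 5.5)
We would prove Proposition~\ref{prop:compact-BHE-transverse-equations} by decomposing all BHE quantities along the splitting $TM=\mathcal F_I\oplus\mathcal H_I$, using \eqref{eq:BHE-transverse-decomposition} and \eqref{eqn:torsionH}. We also use that $V_I$, $IV_I$ are $\nabla^{B,I}$-parallel of unit length, with $\theta_I=df+2\eta$ by \eqref{eq:canonical-BHE-vector-field}. The idea is to express the Lee form, $dH$, and the Bismut Ricci form of $(I,g)$ in terms of the transverse data and the curvatures $d\eta$, $d\eta_I$, and then read off each identity by separating components.

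\textbf{Lee form and traces.} First compute $\theta_I$ from \eqref{eqn:Lee_form} using $H=\eta\wedge d\eta+\eta_I\wedge d\eta_I+H^T$. Here $d\eta$, $d\eta_I$ are basic and $I$-invariant, and $H^T$ is horizontal. Taking an adapted frame $\{V_I,IV_I,e_a\}$, the horizontal part of $\theta_I$ is $\theta^T_I$. The vertical parts come from the terms $\eta_I\wedge d\eta_I$ and $\eta\wedge d\eta$ evaluated on $(IX,e_a,Ie_a)$ with $X$ vertical, giving multiples of $\operatorname{tr}_{\omega^T}d\eta$ and $\operatorname{tr}_{\omega^T}d\eta_I$ times $\eta_I$ and $\eta$ respectively. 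Comparing with $\theta_I=df+2\eta$, and using $V_I(f)=IV_I(f)=0$ so that $df$ is horizontal and basic, we get:
\begin{itemize}
\item the horizontal component gives $\theta_I^T=df$, which is \eqref{eq:BHE-transverse-Lee};
\item the $\eta_I$-coefficient gives $\operatorname{tr}_{\omega^T}d\eta=0$;
\item the $\eta$-coefficient gives $\operatorname{tr}_{\omega^T}d\eta_I=-4$, with the sign and normalization fixed by $|V_I|=1$.
\end{itemize}

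\textbf{Anomaly equation.} Next, apply $d$ to \eqref{eqn:torsionH}. Since $dH=0$ and $d\eta$, $d\eta_I$ are closed,
\[
0=d\eta\wedge d\eta+d\eta_I\wedge d\eta_I+dH^T .
\]
Because $H^T=-d^c_{I^T}\omega^T$, we have $dH^T=-dd^c_{I^T}\omega^T$. This yields \eqref{eq:BHE-anomaly-equation}.

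\textbf{Transverse Ricci identity.} Finally, for \eqref{eq:BHE-transverse-Ricci} we relate $\rho^{B,I}$ to $\rho^{B,I^T}$. Since $V_I$ and $IV_I$ are $\nabla^{B,I}$-parallel, the Bismut connection preserves the splitting and acts trivially on the vertical line bundle $\mathcal F_I$. The trace in \eqref{eqn:rho_B} therefore reduces to the horizontal block. That block is the transverse Bismut connection, corrected by the O'Neill-type terms coming from $d\eta$ and $d\eta_I$. A cleaner route is through Ricci potentials. Write $\rho^{B}=-dd^c\log$ of the density of a holomorphic volume form relative to $dV_g$; equivalently, use the standard formula $\rho^B=\rho^{C}-dd^c$-type corrections via the Lee form, $\rho^B=\rho^{Ch}+d(I\theta)$ up to convention. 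On the quotient, $\rho^{Ch}$ of $(M,I,g)$ is the pullback of $\rho^{Ch}$ of the transverse metric, because the fibres contribute a flat Hermitian line with unit parallel frame $V_I-iIV_I$. For the Lee forms, $I\theta_I=I\,df+2I\eta=I\,df-2\eta_I$ while $I\theta^T=I\,df$. Subtracting gives
\[
0=\rho^{B,I}=\rho^{B,I^T}-2\,d\eta_I,
\]
possibly up to a normalization factor in this convention, which we would fix by consistency with the trace $\operatorname{tr}_{\omega^T}d\eta_I=-4$ against $\operatorname{tr}\rho^{B,I^T}$. This produces \eqref{eq:BHE-transverse-Ricci}.

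\textbf{Main obstacle.} The main difficulty is exactly this last step: checking the sign and factor conventions, and justifying that the Chern Ricci form of the total space equals the pulled-back transverse Chern Ricci form. Justifying the latter requires a holomorphic description of the $\mathcal F_I$-action. We would handle it in a local foliated chart, with holomorphic coordinates $(w,z)$ in which $V_I-iIV_I=\partial_w$.
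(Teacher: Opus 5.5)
Your derivation of \eqref{eq:BHE-transverse-Lee}--\eqref{eq:BHE-anomaly-equation} is correct. It is also more self-contained than the paper, which simply imports these identities from \cite[Proposition~2.10]{ABLS} with adjusted constants. Evaluating \eqref{eqn:Lee_form} on \eqref{eqn:torsionH} gives $\theta_I|_{\mathcal H}=\theta_I^T$, $\theta_I(V_I)=-\tfrac12\operatorname{tr}_{\omega^T}d\eta_I$ and $\theta_I(IV_I)=\tfrac12\operatorname{tr}_{\omega^T}d\eta$. Comparing with $\theta_I=df+2\eta$ yields the Lee and trace identities, and $dH=0$ gives the anomaly equation.

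For \eqref{eq:BHE-transverse-Ricci}, the paper instead uses the reduction formula \eqref{eq:BHE-Ricci-reduction} from \cite[Proposition~4]{GGP}. Your Chern--Ricci route is a legitimate alternative. In holomorphic coordinates $(w,z)$ with $\partial_w=\tfrac12(V_I-\sqrt{-1}\,IV_I)$, the entry $h_{w\bar w}$ is constant. Its Schur complement is the matrix of $g^T$ in the basic coordinates $z$, so $\rho^{Ch}$ is indeed the pulled-back transverse Chern Ricci form.

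The gap is the sign in the last step, and that sign is the whole content of \eqref{eq:BHE-transverse-Ricci}. With the relation you display, $\rho^B=\rho^{Ch}+d(I\theta)$, your computation gives $0=\rho^{B,I^T}-2\,d\eta_I$. That is $d\eta_I=+\tfrac12\rho^{B,I^T}$, the opposite of the claim.

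Your proposed remedy, fixing the sign by consistency with $\operatorname{tr}_{\omega^T}d\eta_I=-4$, does not work. Tracing only gives $\operatorname{tr}_{\omega^T}\rho^{B,I^T}=\pm 8$, and nothing in your argument determines the sign of $\operatorname{tr}_{\omega^T}\rho^{B,I^T}$ independently. Indeed, in Theorem~\ref{thm:BHE-six-to-four} the positivity $R_K=8e^f>0$ is a \emph{consequence} of this very identity.

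The sign has to come from the convention itself. The paper uses $d^c=i(\bar\partial-\partial)$ and $(I\alpha)(X)=\alpha(IX)$, so $I\,d\phi=-d^c\phi$. With these conventions the correct relation is $\rho^B=\rho^{Ch}-d(I\theta)$. The Hopf surface confirms this. For $\omega=|z|^{-2}\omega_0$, with $\omega_0$ the flat K\"ahler form on $\mathbb C^2\setminus\{0\}$, one has $\theta=-d\log|z|^2$ and $I\theta=d^c\log|z|^2$. Then $\rho^{Ch}=-\tfrac12dd^c\log|z|^{-4}=dd^c\log|z|^2=d(I\theta)$, while $\rho^B=0$.

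With this sign and your correct identity $I\theta_I-I\theta^T=-2\eta_I$, you get $\rho^{B,I}=\rho^{B,I^T}+2\,d\eta_I$ (transverse forms pulled back), hence \eqref{eq:BHE-transverse-Ricci}. More generally, the vertical part of $\theta_I$ is $-\tfrac12(\operatorname{tr}_{\omega^T}d\eta_I)\eta+\tfrac12(\operatorname{tr}_{\omega^T}d\eta)\eta_I$. So the corrected argument reproduces exactly \eqref{eq:BHE-Ricci-reduction} when the traces are constant, giving a self-contained proof of the formula the paper quotes.
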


\begin{proof}
The identities \eqref{eq:BHE-transverse-Lee}--
\eqref{eq:BHE-anomaly-equation} follow from
\cite[Proposition~2.10]{ABLS}, with the constants adjusted
to our definition \eqref{eq:canonical-BHE-vector-field}
and unit-length normalization
\eqref{eq:BHE-unit-normalization}. It remains to verify
\eqref{eq:BHE-transverse-Ricci}. The local principal
$\R^2$-bundle description and \cite[Proposition~4]{GGP} give
\begin{equation}\label{eq:BHE-Ricci-reduction}
    \rho^{B,I}
    =\rho^{B,I^T}
    -\tfrac12\bigl(\operatorname{tr}_{\omega^T}d\eta\bigr)d\eta
    -\tfrac12\bigl(\operatorname{tr}_{\omega^T}d\eta_I\bigr)d\eta_I.
\end{equation}
Since $\rho^{B,I}=0$, the two trace identities in
\eqref{eq:BHE-curvature-traces} imply $0=\rho^{B,I^T}+2d\eta_I,$
which is precisely \eqref{eq:BHE-transverse-Ricci}.
\end{proof}

Equation \eqref{eq:BHE-transverse-Lee} implies that the transverse
Hermitian structure is conformally balanced. In real dimension six,
this implies that $\omega_{K}^T:=e^{-f}\omega^T$ is a transverse
K\"ahler form. In complex dimension two the Bismut Ricci form is
conformally invariant. Hence \eqref{eq:BHE-curvature-traces} and
\eqref{eq:BHE-transverse-Ricci} yield the following dimensional
reduction and converse construction. Notice that the numerical
constants differ from those in \cite[Proposition~2.18]{ABLS}, since
our unit vector field is one half of the vector field used there.

\begin{thm}[cf. {\cite[Proposition~2.18]{ABLS}}]
\label{thm:BHE-six-to-four}
Let $(M^6,I,g)$ be a compact non-K\"ahler BHE manifold, and let
$(M^T,g^T,I^T)$ be any local leaf space of $\mathcal F_I$, endowed
with the induced transverse Hermitian structure. Then $g_K:=e^{-f}g^T$
is K\"ahler. Denote its K\"ahler form, Ricci form, and scalar
curvature by $\omega_K$, $\rho_K$, and $R_K$, respectively.
Then:
\begin{enumerate}
    \item $d\eta_I=-\tfrac12 \rho_K$ and 
    \begin{equation}\label{eq:BHE-transverse-scalar-curvature}
        R_K=8e^f>0.
    \end{equation}

    \item
    The form $\alpha:=d\eta$ is a closed primitive real $(1,1)$-form
    satisfying
    \begin{equation}\label{eq:BHE-four-dimensional-equation}
        \tfrac12\bigl(dd_{I^T}^cR_K\bigr)\wedge\omega_K
        =4\alpha \wedge\alpha
         +\rho_K \wedge\rho_K.
    \end{equation}
\end{enumerate}
Conversely, let $(S,I^T,g_K,\omega_K)$ be a K\"ahler
orbifold surface with strictly positive scalar curvature,
and let $\alpha$ be a closed primitive real $(1,1)$-form
satisfying \eqref{eq:BHE-four-dimensional-equation}.
Assume that
\[
\operatorname{span}_{\mathbb R}
\bigl([\alpha],c_1^{\mathrm{orb}}(S,I^T)\bigr)
\subset H^2_{\mathrm{orb}}(S,\mathbb R)
\]
admits a basis consisting of integral orbifold cohomology
classes. Let $p:M\to S$ be a principal
$\mathbb T^2$-orbibundle with connection
one-forms $\eta,\eta_I$ satisfying
\[
d\eta=p^*\alpha,
\qquad
d\eta_I=-\tfrac12p^*\rho_K.
\]
Define
\begin{equation}\label{eq:BHE-four-to-six-data}
g^T:=\frac{R_K}{8}g_K,
\qquad
f:=\log\!\left(\frac{R_K}{8}\right).
\end{equation}
These data define a BHE structure on $M$ by
\[
g:=\eta^2+\eta_I^2+e^fg_K,
\qquad
\omega_I:=\eta\wedge\eta_I+e^f\omega_K.
\]
 Here $V_I$ and $W_I$ are the vertical vector
fields dual to $\eta$ and $\eta_I$, respectively, and
$I$ is induced by $I^T$ on the horizontal distribution
and satisfies $I(V_I)=W_I$ on the vertical distribution.
The Lee form is
\[
\theta_I=2\eta+df.
\]
If $S$ is compact and the total space $M$ is smooth,
the resulting structure is a compact non-K\"ahler
BHE threefold.
\end{thm}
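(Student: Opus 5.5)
The statement packages two directions, and I would handle them separately. The forward direction takes $(M^6,I,g)$ compact non-K\"ahler BHE. The reverse direction builds a BHE structure on a $\mathbb T^2$-orbibundle from a K\"ahler orbifold surface. In both directions the main tool is Proposition~\ref{prop:compact-BHE-transverse-equations}, together with the conformal behaviour of transverse quantities in complex dimension two.

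\emph{Forward direction.} By \eqref{eq:BHE-transverse-Lee}, the transverse Lee form is $\theta^T=df$. In complex dimension two the Lee form changes by $\theta\mapsto\theta-du$ under $\omega\mapsto e^{-u}\omega$. Hence $\omega_K=e^{-f}\omega^T$ has vanishing Lee form, so $d\omega_K=\theta_K\wedge\omega_K=0$ and $g_K$ is K\"ahler.

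Next I use that the transverse Bismut Ricci form of a Hermitian surface is conformally invariant; this holds because $\rho^B=\rho^{Ch}-dd^c$-type terms and the Lee-form correction cancel the Chern change. Since $\rho^B=\rho_K$ for a K\"ahler metric, \eqref{eq:BHE-transverse-Ricci} gives $d\eta_I=-\tfrac12\rho_K$.

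To get the scalar curvature I trace this identity. With the convention $\tr_{\omega}\alpha=\sum\alpha(e_i,Ie_i)$, one has $\tr_{\omega_K}\rho_K=-R_K$, up to the sign that the convention fixes. Moreover $\tr_{\omega_K}=e^{f}\tr_{\omega^T}$ on $2$-forms. So \eqref{eq:BHE-curvature-traces} gives $-4=\tr_{\omega^T}d\eta_I=e^{-f}\cdot\tfrac12 R_K$, that is, $R_K=8e^f$. I expect the main chore to be pinning down these trace signs consistently.

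For item (2), set $\alpha=d\eta$. It is closed, and it is basic and $I$-invariant by \cite[Lemma~2.7]{ABLS}, so it is a real $(1,1)$-form. It is primitive because $\tr_{\omega^T}d\eta=0$.

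It remains to rewrite \eqref{eq:BHE-anomaly-equation}. Using $\omega^T=e^f\omega_K$ with $d\omega_K=0$, I get $dd^c(e^f\omega_K)=dd^c(e^f)\wedge\omega_K$. Substituting $e^f=R_K/8$ and $d\eta_I=-\tfrac12\rho_K$ gives $\tfrac18 dd^cR_K\wedge\omega_K=\alpha\wedge\alpha+\tfrac14\rho_K\wedge\rho_K$. Multiplying by $4$ yields \eqref{eq:BHE-four-dimensional-equation}.

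\emph{Reverse direction.} The integrality hypothesis lets me choose a principal $\mathbb T^2$-orbibundle whose connection forms have curvatures $p^*\alpha$ and $-\tfrac12p^*\rho_K$. This follows by the standard Chern–Weil/orbifold Kobayashi construction, after changing basis of the lattice.

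I then define $g$, $\omega_I$ and $I$ as stated and verify BHE locally in a uniformizing chart. First, $I$ is integrable: the horizontal part is integrable, and the curvature $(d\eta+i\,d\eta_I)$ is of type $(1,1)$, so $d(\eta+i\eta_I)$ has no $(0,2)$ part.

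Next, the transverse structure $\omega^T=e^f\omega_K$ has Lee form $df$ by reversing the computation above. Its trace identities $\tr_{\omega^T}d\eta=0$ and $\tr_{\omega^T}d\eta_I=-4$ follow from primitivity of $\alpha$ and from $R_K=8e^f$, which holds by the definition of $f$.

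Pluriclosedness is the following computation. From \eqref{eqn:torsionH}, $dH=d\eta\wedge d\eta+d\eta_I\wedge d\eta_I+dH^T$ up to sign. Here $dH^T=-dd^c\omega^T$, and this vanishes exactly by \eqref{eq:BHE-four-dimensional-equation}, reading the previous computation backwards.

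Finally, \eqref{eq:BHE-Ricci-reduction} (from \cite{GGP}) gives $\rho^{B,I}=\rho_K+2d\eta_I=0$, using conformal invariance once more. The Lee form is $\theta_I=2\eta+df$, since $\theta^T=df$ and the vertical contribution from $\tr\,d\eta_I=-4$ gives $2\eta$ under our normalization.

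Compactness and non-K\"ahlerness are then immediate, because $V_I=\tfrac12(\theta_I^\sharp-\grad f)$ is the unit field dual to $\eta$ and so does not vanish. The main obstacle throughout is consistent bookkeeping of constants and signs: the trace conventions, the factor relating our $V_I$ to that of \cite{ABLS}, and the sign of $dH^T$ versus $dd^c\omega^T$.
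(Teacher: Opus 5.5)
Your argument follows the paper's route. Conformal balancedness ($\theta^T=df$) makes $g_K=e^{-f}g^T$ K\"ahler. Conformal invariance of the Bismut Ricci form in complex dimension two turns \eqref{eq:BHE-transverse-Ricci} into $d\eta_I=-\tfrac12\rho_K$. The trace identities \eqref{eq:BHE-curvature-traces} give $R_K$ and the primitivity of $\alpha$. The anomaly equation \eqref{eq:BHE-anomaly-equation} rewrites as \eqref{eq:BHE-four-dimensional-equation}. The converse is these steps read backwards together with \eqref{eq:BHE-Ricci-reduction}. The paper only sketches this before the theorem and defers to \cite[Proposition~2.18]{ABLS}. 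Your explicit checks of the converse (integrability of $I$ via the $(1,1)$ curvature of $\eta+\sqrt{-1}\,\eta_I$, $dH=0$ from the box equation, the Lee form) expand the same argument correctly.

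There is, however, a sign error in the scalar-curvature step, which is exactly where the sign matters. The paper's conventions are $\omega(X,Y)=g(IX,Y)$, $\operatorname{tr}_\omega\alpha=\sum_i\alpha(e_i,Ie_i)$ and $\rho_K(Y,Z)=\mathrm{Ric}(I^TY,Z)$. With these, $\operatorname{tr}_{\omega_K}\rho_K=\sum_i\mathrm{Ric}(Ie_i,Ie_i)=+R_K$, not $-R_K$. This matches the identity $\rho_K\wedge\omega_K=\tfrac{R_K}{4}\,\omega_K^{2}$ used later in the paper. The correct chain is
\[
-4=\operatorname{tr}_{\omega^T}d\eta_I=-\tfrac12\, e^{-f}\operatorname{tr}_{\omega_K}\rho_K=-\tfrac12\, e^{-f}R_K,
\]
which gives $R_K=8e^f>0$. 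As you wrote it, $\operatorname{tr}_{\omega_K}\rho_K=-R_K$ leads to $-4=\tfrac12 e^{-f}R_K$, that is $R_K=-8e^f$. So your stated conclusion $R_K=8e^f$ does not follow from your own line, and the positivity in \eqref{eq:BHE-transverse-scalar-curvature} is not actually justified.

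The same corrected sign is needed in the converse. It is what lets you deduce $\operatorname{tr}_{\omega^T}d\eta_I=-4$ from $f=\log(R_K/8)$ and $d\eta_I=-\tfrac12\rho_K$. That identity in turn gives $\theta_I=2\eta+df$ and the cancellation $\rho^{B,I}=\rho_K+2\,d\eta_I=0$. With this sign fixed, your proof goes through.
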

\subsection{Generalized K\"ahler BHE structures}
We introduce the class of structures that will be studied throughout this paper.

\begin{defn}\label{def:GKBHE}
A generalized K\"ahler structure $(I,J,g)$ is called
\emph{generalized K\"ahler Bismut--Hermitian--Einstein}, or
\emph{GKBHE}, if $ \rho^{B,I}=0$ and $\rho^{B,J}=0$.
\end{defn}

Equivalently, a GKBHE structure consists of two BHE Hermitian
structures whose Bismut torsions have opposite signs. Indeed, setting $
H:=-d_I^c\omega_I=d_J^c\omega_J,$
the corresponding Bismut connections are
\begin{equation} \label{eq:Bismut_connections}
   \nabla^{B,I}=\nabla+\tfrac12H,
    \qquad
    \nabla^{B,J}=\nabla-\tfrac12H. 
\end{equation}

\begin{rmk}
It is natural to ask whether, within the generalized K\"ahler class,
the vanishing of only one of the two Bismut Ricci forms forces the
other to vanish. We do not know whether this implication holds, even
when the underlying manifold is compact.
\end{rmk}

Suppose now that $M$ is compact. Since both $(I,g)$ and $(J,g)$ are
BHE, they are  gradient pluriclosed solitons. Let $f$ be the normalized soliton potential associated with $(I,g)$. The system
\eqref{eq:-pluriclosed-soliton} is invariant under
$H\mapsto-H$. Since the Bismut torsion of $(J,g)$ is $-H$, the same
function $f$ is also the normalized soliton potential for $(J,g)$. Consequently,
$(I,J,g,f)$ is a generalized K\"ahler--Ricci soliton.

We define the associated canonical vector fields by
\[
    V_I:=\tfrac12\bigl(\theta_I^\sharp-\grad f\bigr),
    \qquad
    V_J:=\tfrac12\bigl(\theta_J^\sharp-\grad f\bigr).
\]
The structure theory of compact BHE manifolds 
\cite[Proposition~2.6]{ABLS}, together with the general symmetry
identities for generalized K\"ahler--Ricci solitons
\cite[Proposition~2.6]{ABSU}, yields the following collection of
a priori symmetries.
\begin{lem} \label{lem:a_priori_symmetries}
Let $(M,I,J,g)$ be a connected compact GKBHE manifold. The following a priori symmetries hold
\begin{enumerate}
    \item $V_I$ is $\nabla^{B,I}$-parallel, Killing and $I$-holomorphic; \label{item:V_I}
    \item $V_J$ is $\nabla^{B,J}$-parallel, Killing and $J$-holomorphic;\label{item:V_J}
    \item $IV_I$ is $\nabla^{B,I}$-parallel, Killing, $I$-holomorphic and $J$-holomorphic;\label{item:IV_I}
    \item $JV_J$ is $\nabla^{B,J}$-parallel, Killing, $I$-holomorphic and $J$-holomorphic;\label{item:JV_J}
    \item $\mathcal{L}_{V_{I}}f=\mathcal{L}_{IV_{I}}f=\mathcal{L}_{V_{J}}f=\mathcal{L}_{JV_{J}}f=0$.\label{item:basic}
\end{enumerate}
\end{lem}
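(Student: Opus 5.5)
The plan is to obtain items (1), (2) and (5) from the single-structure BHE theory, applied to $(I,g)$ and to $(J,g)$ separately. Item (3) and the $I$-part of item (4) then come from the generalized K\"ahler--Ricci soliton symmetry identities of \cite[Proposition~2.6]{ABSU}.

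\textbf{Step 1: the single-structure statements.} By \cite[Proposition~2.6]{GFJS}, $(I,g)$ is a gradient pluriclosed soliton with normalized potential $f$. As noted above, $f$ is also the normalized soliton potential of $(J,g)$, because the soliton system \eqref{eq:-pluriclosed-soliton} is invariant under $H\mapsto -H$. Applying \eqref{eq:BHE-parallel-fields} to $(I,g)$ gives $\nabla^{B,I}V_I=0$ and $\mathcal L_{V_I}I=0$. Since $\nabla^{B,I}$ is metric with totally skew torsion, a $\nabla^{B,I}$-parallel field $X$ satisfies
\[
g(\nabla_Y X,Z)=-\tfrac12 H(Y,X,Z).
\]
The right-hand side is skew in $(Y,Z)$, so $\nabla X$ is skew and $X$ is Killing. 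Also $\nabla^{B,I}I=0$, so $IV_I$ is $\nabla^{B,I}$-parallel as well, hence Killing. The results recalled after \eqref{eq:BHE-parallel-fields} (from \cite{ABLS}) say that $IV_I$ is $I$-holomorphic and that $V_I(f)=IV_I(f)=0$. The same argument applied to $(J,g)$ gives item (2), the $\nabla^{B,J}$-parallelism, Killing property and $J$-holomorphicity of $JV_J$, and the $J$-half of item (5).

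\textbf{Step 2: the mixed holomorphicity.} What remains is that $IV_I$ is $J$-holomorphic and $JV_J$ is $I$-holomorphic; this is the step I expect to be the main obstacle. The approach is to invoke \cite[Proposition~2.6]{ABSU}. For a generalized K\"ahler--Ricci soliton $(I,J,g,f)$, it asserts that
\[
-\tfrac12 I(\theta_I-df)^\sharp \quad\text{and}\quad -\tfrac12 J(\theta_J-df)^\sharp
\]
are Killing fields preserving $I$, $J$, $f$ and $H$. These are, up to sign, exactly $IV_I$ and $JV_J$. Only sign and normalization conventions need to be checked against \eqref{eq:canonical-BHE-vector-field}.

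If one prefers a self-contained route, start from Lemma~\ref{lem:divergence}: $\operatorname{div}_{\mu_f}\pi_+=IV_I+JV_J$, up to sign. In the same way, $\operatorname{div}_{\mu_f}\pi_-$ is a combination of the form $IV_I-JV_J$. Divergences of Poisson bivectors with respect to a volume form are Poisson vector fields, so they preserve $\pi_\pm$ and hence $I\pm J$ modulo the metric. Since $IV_I$ and $JV_J$ are Killing, taking half-sums and half-differences shows that each of them preserves both $I$ and $J$. This second route needs $I$ and $J$ to induce the same orientation; otherwise one replaces $J$ by $-J$, which preserves the generalized K\"ahler condition as remarked in Section~\ref{subs:GK}.

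\textbf{Step 3: invariance of $f$.} For item (5), invariance of $f$ under $V_I$ and $IV_I$ comes from \cite[Lemma~2.9]{ABLS}, and symmetrically for $V_J$ and $JV_J$. Connectedness and compactness are used only to make $f$ globally defined and normalized, so that the $I$ and $J$ structures share the same $f$.
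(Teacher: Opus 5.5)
Your main route is essentially the paper's, which gives no separate proof. The paper applies the compact BHE structure theory (\cite{GFJS}, \cite[Proposition~2.6 and Lemma~2.9]{ABLS}) to $(I,g)$ and to $(J,g)$ with the common potential $f$, and cites \cite[Proposition~2.6]{ABSU} for the mixed holomorphicity of $IV_I$ and $JV_J$. Your Step~1 details, such as the Killing property from $g(\nabla_YX,Z)=-\tfrac12H(Y,X,Z)$, are fine.

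\textbf{The optional self-contained route in Step~2 does not work as stated.} Write $A:=IV_I$ and $B:=JV_J$, and use $\mathcal L_AI=0$ and $\mathcal L_BJ=0$ from Step~1.
\begin{itemize}
\item $\operatorname{div}_{\mu_f}\pi_+=-(A+B)$ is a Killing Poisson vector field for $\pi_+$. This gives $\mathcal L_{A+B}(I+J)=0$, i.e.\ $\mathcal L_AJ+\mathcal L_BI=0$.
\item $\operatorname{div}_{\mu_f}\pi_-=-(A-B)$ gives $\mathcal L_{A-B}(I-J)=-(\mathcal L_AJ+\mathcal L_BI)=0$.
\end{itemize}
This is the same relation. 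Taking half-sums and half-differences therefore only yields $\mathcal L_AJ=-\mathcal L_BI$, not the separate vanishing of each term.

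Pointwise algebra does not close the gap either. The common tensor $T=\mathcal L_AJ$ is $g$-skew and anticommutes with both $I$ and $J$. Such a tensor need not vanish: at a point where $IJ=-JI$, any multiple of $IJ$ qualifies. So the appeal to \cite[Proposition~2.6]{ABSU} is genuinely needed and is not just a shortcut.

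A minor point: compactness is not used only to normalize $f$. It is already essential for \cite[Proposition~2.6]{GFJS} (BHE implies gradient soliton) and for the Bismut-parallelism of $V_I$.
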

We now establish further a priori-symmetries.
\begin{lem} \label{lem:commuting}
Let $(M,I,J,g)$ be a connected compact GKBHE manifold. Then, for any $X,Y \in \{V_I, IV_I, V_J,JV_J\}$, $[X,Y]=0$.
\end{lem}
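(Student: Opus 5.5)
We need to show that the four fields $V_I, IV_I, V_J, JV_J$ pairwise commute. The plan is to combine the Killing and holomorphy properties of Lemma~\ref{lem:a_priori_symmetries} with a standard rigidity argument based on parallel vector fields.

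\emph{Step 1: pairs involving only one structure.} We have $[V_I,IV_I]=0$ because $V_I$ is $I$-holomorphic. Indeed, $\mathcal L_{V_I}(IV_I)=(\mathcal L_{V_I}I)V_I+I[V_I,V_I]=0$. By symmetry, $[V_J,JV_J]=0$.

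\emph{Step 2: brackets of $IV_I$ and $JV_J$ with each other and with the other structure's canonical field.} Since $IV_I$ is a $J$-holomorphic Killing field, its flow preserves $g$, $J$ and hence $\omega_J$, $\theta_J$. It also preserves $f$, by item~\ref{item:basic} of Lemma~\ref{lem:a_priori_symmetries}, or by uniqueness of the normalized soliton potential, since the flow preserves the whole GK soliton data. Therefore $\mathcal L_{IV_I}\theta_J^\sharp=0$ and $\mathcal L_{IV_I}\grad f=0$, which gives $[IV_I,V_J]=0$. Applying the same argument with the roles of $I$ and $J$ exchanged gives $[JV_J,V_I]=0$. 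Since $IV_I$ is also $J$-holomorphic, $\mathcal L_{IV_I}(JV_J)=J[IV_I,V_J]=0$, so $[IV_I,JV_J]=0$.

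\emph{Step 3: the remaining bracket $[V_I,V_J]$.} This is the main obstacle, because $V_I$ is not known a priori to preserve $J$. Killing fields preserve $g$ and $H$; we need to check that $V_I$ preserves $H$. Since $V_I$ is Killing and $I$-holomorphic, it preserves $\omega_I$, hence $H=-d^c_I\omega_I$. Consequently, the flow of $V_I$ maps $\nabla^{B,J}=\nabla-\tfrac12H$ to itself. Because $V_J$ is $\nabla^{B,J}$-parallel, its pushforward $W_t:=(\phi_t)_*V_J$ is again $\nabla^{B,J}$-parallel and has the same constant norm.

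Now, $\mathcal L_{V_I}V_J=\frac{d}{dt}W_t|_{t=0}$ is itself a $\nabla^{B,J}$-parallel field. Parallel sections of a metric connection on a connected manifold are determined by their value at a point. We then exploit $\nabla^{B,I}V_I=0$ and $\nabla^{B,J}V_J=0$ to compute
\[
[V_I,V_J]=\nabla_{V_I}V_J-\nabla_{V_J}V_I=\tfrac12 H(V_I,V_J,\cdot)^\sharp+\tfrac12 H(V_J,V_I,\cdot)^\sharp\cdot(-1)\cdots
\]
More cleanly, using $\nabla=\nabla^{B,J}+\tfrac12H$ on $V_J$ and $\nabla=\nabla^{B,I}-\tfrac12H$ on $V_I$, one gets
\[
[V_I,V_J]=\tfrac12H(V_I,V_J)+\tfrac12H(V_J,V_I)=0
\]
by skew-symmetry of $H$, with sign conventions to be checked. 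This direct computation in fact settles the bracket without the flow argument.

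The same trick handles every pair of the form (a $\nabla^{B,I}$-parallel field, a $\nabla^{B,J}$-parallel field): this covers $V_I$ or $IV_I$ against $V_J$ or $JV_J$. It therefore serves as a uniform alternative to Step~2. The only delicate point is verifying the sign conventions in \eqref{eq:Bismut_connections}, so that the two torsion terms cancel rather than add.
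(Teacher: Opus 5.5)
Your proposal is correct and, stripped of the flow detours (Step~2 is valid but redundant, and the unfinished parallel-section argument at the start of Step~3 is unnecessary), it is exactly the paper's proof. Holomorphy handles the pairs $(V_I,IV_I)$ and $(V_J,JV_J)$. For every $X\in\{V_I,IV_I\}$ and $Y\in\{V_J,JV_J\}$, the conditions $\nabla^{B,I}X=0$ and $\nabla^{B,J}Y=0$ give $[X,Y]=\nabla_XY-\nabla_YX=\tfrac12H(X,Y)+\tfrac12H(Y,X)=0$. The sign issue you flag is harmless: the cancellation does not depend on the overall sign convention and uses only that the two Bismut torsions are opposite, which is the generalized K\"ahler condition in \eqref{eq:Bismut_connections}.
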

\begin{proof}
For $(X,Y)\in \{(V_I,IV_I), (V_J,JV_J)\}$, the result follows using that $V_I$ and $V_J$ are $I$-holomorphic and $J$-holomorphic respectively (item \ref{item:V_I}, \ref{item:V_J} in Lemma \ref{lem:a_priori_symmetries}). Now, let $X \in \{V_I,IV_I\}$, and $Y \in \{V_J,JV_J\}$. Since $X$ is $\nabla^{B,I}$-parallel, 
\[
\nabla_Y X= -\tfrac12 H(Y,X). 
\]
Similarly, since $Y$ is $\nabla^{B,J}$-parallel, 
\[
\nabla_X Y= \tfrac12 H(X,Y). 
\]
Therefore
\[
[X,Y]=\nabla_X Y-\nabla_Y X=\tfrac12 H(X,Y)+\tfrac12 H(Y,X)=0. 
\]
\end{proof}
\begin{lem} \label{lem:IJ_holomorphic}
Let $(M,I,J,g)$ be a connected compact GKBHE manifold. Then $V_I$ is $J$-holomorphic and $V_J$ is $I$-holomorphic.    
\end{lem}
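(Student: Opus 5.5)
The plan is to reduce $J$-holomorphicity of $V_I$ to a type condition on the $2$-form $\iota_{V_I}H$, and then prove that type condition with an integral argument on compact $M$. The statement for $V_J$ follows by the symmetry $I\leftrightarrow J$, $H\leftrightarrow -H$. Only the $V_I$ case is described below.

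\textbf{Step 1: compute $\nabla^{B,J}V_I$.} By Lemma~\ref{lem:a_priori_symmetries}, $V_I$ is Killing and $\nabla^{B,I}V_I=0$. By \eqref{eq:Bismut_connections}, $\nabla^{B,J}=\nabla^{B,I}-H$. Hence, as an endomorphism,
\[
\nabla^{B,J}_Y V_I=-H(Y,V_I)=H(V_I,Y).
\]
On the other hand, $g(\nabla_XV_I,Y)=\tfrac12 H(V_I,X,Y)$, so $d\eta=\iota_{V_I}H$, where $\eta=V_I^\flat$. This matches \eqref{eqn:torsionH}, since $H^T$ is horizontal.

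\textbf{Step 2: rewrite $\mathcal L_{V_I}J$.} Expand $\mathcal L_{V_I}J$ using the $J$-Hermitian connection $\nabla^{B,J}$ and its torsion $T^{B,J}=-H$. Because $\nabla^{B,J}J=0$, one gets, up to checking signs,
\[
(\mathcal L_{V_I}J)Y=J\nabla^{B,J}_YV_I-\nabla^{B,J}_{JY}V_I-H(V_I,JY)+JH(V_I,Y).
\]
By Step~1 this equals a multiple of $J\,d\eta(Y)-d\eta(JY)$, with $d\eta$ viewed as a skew endomorphism. So the lemma is equivalent to the following claim: the $J$-anti-invariant part $\beta:=\tfrac12(d\eta-J d\eta)$, i.e.\ the $(2,0)+(0,2)$-component of $d\eta$ with respect to $J$, vanishes.

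\textbf{Step 3 (main obstacle): show $\beta=0$.} I expect this to be the hard step, because $H$ is of type $(2,1)+(1,2)$ for $J$. The contraction $\iota_{V_I}H$ therefore has no obvious reason to be $J$-$(1,1)$ pointwise, and some global input is needed.

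My first attempt is a Bochner/integration argument on the compact manifold.
\begin{itemize}
\item Since $V_I$ is Killing and preserves $I$, $g$, $f$, it preserves $H$. So $d\eta=\iota_{V_I}H$ is closed, and by Cartan's formula $d(\iota_{V_I}H)=\mathcal L_{V_I}H=0$.
\item Its codifferential is $d^*d\eta=2\Ric^\nabla(V_I)^\flat$, because $\eta$ is Killing-dual. Using the soliton equation \eqref{eq:-pluriclosed-soliton} together with $V_I(f)=0$ and $\nabla^{B,I}V_I=0$, this should reduce to an expression in $H^2(V_I)$, with the gradient term dropping.
\item Next, pair $d\eta$ against the $J$-anti-invariant part of $H$-contractions. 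Use that $(J,g,f)$ is itself a gradient pluriclosed soliton with $\rho^{B,J}=0$, via the identity $\rho^{B,J}\circ J=\nabla^{B,J}(\theta_J-df)$ from \cite[Proposition 2.17]{Lee1}, as in Claim~\ref{claim:GRS}.
\item Aim for $\int_M|\beta|^2e^{-f}dV_g=0$.
\end{itemize}

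If that fails, the fallback is infinitesimal. The flow $\phi_t$ of $V_I$ gives a family $\phi_t^*J$ of complex structures compatible with $g$, with the same torsion $-H$ and still BHE. Its derivative $\mathcal L_{V_I}J$ is an infinitesimal GKBHE deformation of $J$ fixing $g$ and $H$, and I would try to show that the anti-invariant part of such a deformation is forced to be zero, again by integrating against $e^{-f}dV_g$.
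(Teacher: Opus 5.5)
\textbf{There is a genuine gap, and it comes from a sign slip in Step 2.} Everything after Step 2 depends on it.

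You correctly note that $T^{B,J}=-H$, but your expansion carries the torsion terms with the opposite sign. For any connection $D$ one has $[V,W]=D_VW-D_WV-T^D(V,W)$. Take $D=\nabla^{B,J}$, so that $DJ=0$ and $T^D(V_I,W)=-H(V_I,W)$, where $H(V_I,W)$ is the vector $g$-dual to $H(V_I,W,\cdot)$. The correct identity is then
\[
(\mathcal L_{V_I}J)Y=J\nabla^{B,J}_YV_I-\nabla^{B,J}_{JY}V_I+H(V_I,JY)-J\,H(V_I,Y).
\]
Substitute your Step 1, $\nabla^{B,J}_YV_I=H(V_I,Y)$. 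The four terms cancel in pairs, so $\mathcal L_{V_I}J=0$ identically.

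The same conclusion in Levi-Civita terms: write $g(H_XY,Z)=H(X,Y,Z)$. Then $\nabla V_I=\tfrac12H_{V_I}$ and $\nabla_{V_I}J=\tfrac12[H_{V_I},J]$. Hence $\mathcal L_{V_I}J=\nabla_{V_I}J-[\nabla V_I,J]=0$. Put differently, for a $\nabla^{B,I}$-parallel field one has $\mathcal L_{V_I}=\nabla^{B,J}_{V_I}$ on tensors.

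So the lemma is not equivalent to the vanishing of the $J$-$(2,0)+(0,2)$ part of $d\eta=\iota_{V_I}H$; that reduction is an artifact of the sign error. Your Step 3, which then carries all the weight, is also only a programme. No identity is derived that actually forces $\int_M|\beta|^2e^{-f}dV_g=0$, and the deformation fallback is not carried out. As written, the proposal does not prove the statement.

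\textbf{With the sign fixed, your Steps 1--2 already give a complete pointwise proof.} It uses only $\nabla^{B,I}V_I=0$ from Lemma~\ref{lem:a_priori_symmetries} and the generalized K\"ahler condition $H_J=-H$. The case of $V_J$ is symmetric: $\nabla V_J=-\tfrac12H_{V_J}$ and $\nabla_XI=-\tfrac12[H_X,I]$, so again $\mathcal L_{V_J}I=0$. This is the same mechanism as in the proof of Lemma~\ref{lem:commuting}.

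\textbf{The paper argues differently.} It invokes \cite[Proposition~4.2]{GS}: in a generalized K\"ahler structure with $\rho^{B,I}=0$ one has $\mathcal L_{\theta_J^\sharp-\theta_I^\sharp}J=0$. Since $\theta_J^\sharp-\theta_I^\sharp=2(V_J-V_I)$ and $V_J$ is already $J$-holomorphic, $\mathcal L_{V_I}J=0$ follows. Your corrected argument needs neither that external identity nor the $J$-holomorphicity of $V_J$.
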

\begin{proof}
This is essentially contained in \cite[Proposition 4.2]{GS}. Indeed since $\rho^{B,I}=0$ and $V_J$ is $J$-holomorphic (item \ref{item:V_J} in Lemma \ref{lem:a_priori_symmetries}),
\[
0= \mathcal{L}_{\theta_J^\sharp-\theta_I^\sharp} J=2 \mathcal{L}_{V_J-V_I}J= -2 \mathcal{L}_{V_I}J.
\]
Exchanging the role of $I$ and $J$ in \cite[Proposition 4.2]{GS}, the same proof gives $ \mathcal{L}_{V_J}I=0$. 
\end{proof}
\begin{lem} \label{lem:norm}
Let $(M,I,J,g)$ be a connected compact GKBHE manifold. Then $|V_I|^2=|V_J|^2$.    
\end{lem}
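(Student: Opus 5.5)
The plan is to derive the equality pointwise from the trace identity \eqref{eqn:torsion_trace}, applied once to $(I,g)$ and once to $(J,g)$. The point is that the two structures share the metric, the soliton potential $f$, and the norm of the torsion ($\pm H$). So the only term that can differ between the two applications is the one involving $V_I$ or $V_J$.

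\emph{Step 1.} Since $(I,g)$ is pluriclosed, $dH=0$, and \eqref{eqn:torsion_trace} gives
\[
8|\theta_I|^2+8d^*\theta_I=\tfrac43|H|^2 .
\]
The Bismut torsion of $(J,g)$ is $-H$, which is also closed, so the same identity holds for $J$:
\[
8|\theta_J|^2+8d^*\theta_J=\tfrac43|{-H}|^2=\tfrac43|H|^2 .
\]

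\emph{Step 2.} Substitute $\theta_K=df+2V_K^\flat$ for $K\in\{I,J\}$, which is the definition of $V_K$. Expanding the norm,
\[
|\theta_K|^2=|df|^2+4\,df(V_K)+4|V_K|^2=|df|^2+4|V_K|^2 .
\]
The cross term vanishes because $\mathcal L_{V_K}f=0$, by item~\ref{item:basic} of Lemma~\ref{lem:a_priori_symmetries}. For the codifferential, $d^*\theta_K=d^*df+2d^*V_K^\flat=-\Delta f$, using \eqref{eqn:laplacian}. Here $d^*V_K^\flat=-\div V_K=0$ because $V_K$ is Killing (items~\ref{item:V_I} and~\ref{item:V_J} of Lemma~\ref{lem:a_priori_symmetries}).

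\emph{Step 3.} Inserting these expressions into the two identities of Step 1 gives, pointwise on $M$ and for both $K=I$ and $K=J$,
\[
8|df|^2+32|V_K|^2-8\Delta f=\tfrac43|H|^2 .
\]
The function $f$ is the same for $I$ and $J$, because it is the common normalized soliton potential. Subtracting the two equations therefore yields $|V_I|^2=|V_J|^2$ pointwise, which in particular makes both norms constant.

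The only point needing care is the sign and normalization bookkeeping in \eqref{eqn:torsion_trace} for $J$. The identity is stated for an arbitrary Hermitian structure, with $\theta$ and $H$ being that structure's Lee form and Bismut torsion. For $(J,g)$ these are $\theta_J$ and $-H$, and the right-hand side depends only on $|H|^2$, so no sign issue arises. I do not expect a genuine obstacle beyond this.
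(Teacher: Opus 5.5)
Your proof is correct and is essentially the paper's argument: both apply \eqref{eqn:torsion_trace} to $(I,g,H)$ and $(J,g,-H)$, use $V_K(f)=0$ to get $|\theta_K|^2=|df|^2+4|V_K|^2$, and use the Killing property to match the codifferential terms. You show $d^*\theta_I=d^*\theta_J=-\Delta f$ from $\div V_K=0$, while the paper traces $\mathcal L_{V_I-V_J}g=0$, which is the same fact. One small aside: the subtraction gives only the equality $|V_I|=|V_J|$, not constancy of the norms, which instead follows from $V_K$ being parallel for the metric connection $\nabla^{B,K}$.
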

\begin{proof}
Let $K \in \{I,J\}$. Then $\theta_K^\sharp=2 V_K + \grad(f)$. Using item \ref{item:basic} in Lemma \ref{lem:a_priori_symmetries}, $|\theta_K|^2=4 |V_K|^2+|\grad(f)|^2$. Since $V_I$ and $V_J$ are Killing (items \ref{item:V_I}, \ref{item:V_J} in Lemma \ref{lem:a_priori_symmetries}), 
$0=\mathcal{L}_{V_I-V_J}g=\tfrac 12 \mathcal{L}_{\theta_I^\sharp-\theta_J^\sharp}g$. Tracing this equation with the metric, we get $d^*\theta_I=d^*\theta_J$. We apply \eqref{eqn:torsion_trace} to both $(I,g,H)$ and $(J,g,-H)$. This gives
\begin{align*}
    &2 |\theta_I|^2 + 2 d^*\theta_I-\tfrac13 |H|^2=0, \\
    &2 |\theta_J|^2 + 2 d^*\theta_J-\tfrac13 |H|^2=0.
\end{align*}
Using that $d^*\theta_I=d^*\theta_J$, the difference of the two equations above gives $|\theta_I|^2=|\theta_J|^2$, which in turn imply the claimed identity. 
\end{proof}
If $V_I=0$, then $(I,g)$ is K\"ahler; consequently $H=0$, and $(J,g)$ is
K\"ahler as well. Thus, outside the Ricci-flat K\"ahler case, the
common length in Lemma~\ref{lem:norm} is positive.
Henceforth we assume that when the GKBHE structure is non-K\"ahler,
\begin{equation}\label{eq:GKBHE-unit-normalization}
    |V_I|^2=|V_J|^2=1,
\end{equation}
up to rescaling. 

We use the BHE splitting associated with $(I,g)$. As in Subsection \ref{subsec:BHE-structures}, let $\mathcal F_I:=\operatorname{span}\{V_I,IV_I\}$ and $\mathcal H:=\mathcal F_I^\perp$.
Then
\begin{equation}\label{eqn:splitting}
    TM=\mathcal F_I\oplus^\perp\mathcal H.
\end{equation}
The Hermitian structure $(g,\omega_I)$ decomposes accordingly. Set $\eta:=V_I^\flat$ and $\eta_I:= (IV_I)^\flat$.
Define $Z_1:=V_J$, and $Z_2:=JV_J$.
With respect to \eqref{eqn:splitting}, write
\begin{equation}\label{eqn:Z_1Z_2}
    Z_i=a_iV_I+b_iIV_I+X_i,
    \qquad i=1,2,
\end{equation}
where $X_i\in\Gamma(\mathcal H)$ and $a_i=g(Z_i,V_I)$ and $b_i=g(Z_i,IV_I).$
\begin{prop}\label{prop:GKBHE-horizontal-symmetries}
Let $(M^{2n},I,J,g)$, be a connected compact GKBHE manifold. For all
$i,j\in\{1,2\}$, the following hold.
\begin{enumerate}[label=\textup{(\roman*)}, ref=(\roman*)]
    \item\label{item:GKBHE-coefficients-basic}
    The functions $a_i$ and $b_i$ are basic.

    \item\label{item:GKBHE-coefficients-invariant}
    One has $Z_i(a_j)=Z_i(b_j)=X_i(a_j)=X_i(b_j)=0.$

    \item\label{item:GKBHE-horizontal-projectable}
    The vector fields $X_i$ satisfy $[X_i,V_I]=[X_i,IV_I]=0.$

    \item\label{item:GKBHE-horizontal-commuting}
    The vector fields $X_1$ and $X_2$ commute.

    \item\label{item:GKBHE-horizontal-f-invariant}
    One has $X_i(f)=0$.

    \item\label{item:GKBHE-transverse-automorphisms}
    The vector fields $X_i$ project to real holomorphic Killing vector
    fields of the transverse Hermitian structure
    $(g^T,\omega_I^T,I^T)$. If $n \ge 3$, they also preserve the transverse conformally 
    balanced metric $(e^{-f/(n-2)}g^T, e^{-f/(n-2)}\omega_I^T)$.
    \item\label{item:GKBHE-curvature-potentials}
    The following identities hold:
    \begin{equation} \label{eqn:diff_curvatures}
        da_i=-\iota_{X_i}d\eta,
        \qquad
        db_i=-\iota_{X_i}d\eta_I.
    \end{equation}

    \item\label{item:GKBHE-curvature-orthogonality}
    One has $d\eta(X_1,X_2)=d\eta_I(X_1,X_2)=0.$
\end{enumerate}
\end{prop}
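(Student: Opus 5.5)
The whole proof rests on three ingredients established earlier. First, $V_I,IV_I,V_J,JV_J$ are Killing and pairwise commuting (Lemmas~\ref{lem:a_priori_symmetries} and~\ref{lem:commuting}). Second, $V_I$ and $IV_I$ are $\nabla^{B,I}$-parallel, so $\nabla_YV_I=-\tfrac12H(Y,V_I)$ and $\nabla_YIV_I=-\tfrac12 H(Y,IV_I)$; likewise $\nabla_X Z_i=\tfrac12 H(X,Z_i)$. Third, $H$ has the expression \eqref{eqn:torsionH}. I will prove the items roughly in the order \ref{item:GKBHE-coefficients-invariant}, \ref{item:GKBHE-coefficients-basic}, \ref{item:GKBHE-horizontal-projectable}, \ref{item:GKBHE-horizontal-commuting}, \ref{item:GKBHE-horizontal-f-invariant}, \ref{item:GKBHE-curvature-potentials}, \ref{item:GKBHE-curvature-orthogonality}, \ref{item:GKBHE-transverse-automorphisms}.

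\emph{Coefficients.} For $Y$ any of the four canonical fields, differentiate $a_j=g(Z_j,V_I)$:
\[
Y(a_j)=g(\nabla_YZ_j,V_I)+g(Z_j,\nabla_YV_I).
\]
If $Y\in\{V_I,IV_I\}$, then $\nabla_YZ_j=\nabla_{Z_j}Y$ by commutation, which equals $-\tfrac12H(Z_j,Y)$. The two terms become $-\tfrac12H(Z_j,Y,V_I)-\tfrac12H(Y,V_I,Z_j)$, and these cancel by skew-symmetry. If instead $Y=Z_i$, use $\nabla_{Z_i}Z_j=\nabla_{Z_j}Z_i$ together with the Killing property of $Z_i$ and $V_I$, and obtain cancellation in the same way. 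An alternative argument: each $Y$ is Killing, commutes with $V_I$ and $Z_j$, and so preserves $g(Z_j,V_I)$, because $\mathcal L_Y g=0$ and $\mathcal L_Y Z_j=\mathcal L_YV_I=0$. This cleaner argument gives $Y(a_j)=Y(b_j)=0$ for all four fields, and hence item~\ref{item:GKBHE-coefficients-basic}. Since $X_i=Z_i-a_iV_I-b_iIV_I$ and the coefficients are annihilated by $V_I,IV_I$, we also get $X_i(a_j)=Z_i(a_j)=0$, which gives item~\ref{item:GKBHE-coefficients-invariant}.

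\emph{Bracket relations.} For item~\ref{item:GKBHE-horizontal-projectable}, expand
\[
[V_I,X_i]=[V_I,Z_i]-V_I(a_i)V_I-V_I(b_i)IV_I=0,
\]
and similarly for $IV_I$. For item~\ref{item:GKBHE-horizontal-commuting}, expand $[X_1,X_2]$ using items~\ref{item:GKBHE-coefficients-invariant} and~\ref{item:GKBHE-horizontal-projectable} and $[Z_1,Z_2]=0$. All terms vanish, since every coefficient derivative $X_i(a_j)$, $Z_i(a_j)$ is zero. Item~\ref{item:GKBHE-horizontal-f-invariant} follows from Lemma~\ref{lem:a_priori_symmetries}\ref{item:basic}: $X_i(f)=Z_i(f)-a_iV_I(f)-b_iIV_I(f)=0$.

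\emph{Curvature potentials.} For item~\ref{item:GKBHE-curvature-potentials}, use Cartan's formula:
\[
\mathcal L_{X_i}\eta=\iota_{X_i}d\eta+d(\eta(X_i))=\iota_{X_i}d\eta .
\]
On the other hand, $\mathcal L_{Z_i}\eta=0$ because $Z_i$ is Killing and commutes with $V_I$. Writing $Z_i=a_iV_I+b_iIV_I+X_i$ gives
\[
0=\mathcal L_{Z_i}\eta=a_i\mathcal L_{V_I}\eta+b_i\mathcal L_{IV_I}\eta+da_i\,\eta(V_I)+db_i\,\eta(IV_I)+\mathcal L_{X_i}\eta .
\]
Here $\mathcal L_{V_I}\eta=\mathcal L_{IV_I}\eta=0$, since these fields are Killing and commute with $V_I$. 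Also $\eta(V_I)=1$ and $\eta(IV_I)=0$ by \eqref{eq:GKBHE-unit-normalization}. Hence $0=da_i+\iota_{X_i}d\eta$, and the same computation with $\eta_I$ gives $db_i=-\iota_{X_i}d\eta_I$. Item~\ref{item:GKBHE-curvature-orthogonality} then follows from
\[
d\eta(X_1,X_2)=-da_2(X_1)=-X_1(a_2)=0,
\]
and likewise $d\eta_I(X_1,X_2)=-X_1(b_2)=0$.

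\emph{Transverse automorphisms.} This step, item~\ref{item:GKBHE-transverse-automorphisms}, is the main obstacle. By item~\ref{item:GKBHE-horizontal-projectable}, $X_i$ is projectable, so it descends to local leaf spaces. It preserves $g^T$ because $g^T=g-\eta^2-\eta_I^2$, and $\mathcal L_{X_i}\eta=\iota_{X_i}d\eta=-da_i$ is basic, so
\[
\mathcal L_{X_i}g^T=\mathcal L_{X_i}g+2\,da_i\odot\eta+2\,db_i\odot\eta_I .
\]
Next, $\mathcal L_{X_i}g=\mathcal L_{Z_i}g-\mathcal L_{a_iV_I+b_iIV_I}g$, and $\mathcal L_{a V}g=2\,da\odot V^\flat$ for Killing $V$; the terms cancel. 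To show $I$-holomorphicity, recall that $Z_i$ is $I$-holomorphic (Lemma~\ref{lem:IJ_holomorphic} and Lemma~\ref{lem:a_priori_symmetries}), as are $V_I$ and $IV_I$. So $\mathcal L_{X_i}I=-da_i\otimes V_I-\dots$ has image in $\mathcal F_I$ and kills vertical vectors. Its horizontal-to-horizontal part therefore vanishes, which means $X_i$ preserves $I^T$. Finally, by item~\ref{item:GKBHE-horizontal-f-invariant}, $X_i$ preserves $f$, and hence it preserves the conformally balanced metric $e^{-f/(n-2)}g^T$. The main care here lies in the sign bookkeeping for $\mathcal L_{a V}$ of a tensor, but the cancellations are forced by basicity.
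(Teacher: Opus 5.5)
Your proof is correct and follows essentially the paper's route. You use the Killing-plus-commutation argument for the coefficients, expand $Z_i=a_iV_I+b_iIV_I+X_i$ inside the brackets, apply Cartan's formula to $\mathcal L_{Z_i}\eta=\mathcal L_{Z_i}\eta_I=0$, and descend $\mathcal L_{X_i}g^T$ and $\mathcal L_{X_i}I$ to the leaf space; your explicit cancellation replaces the paper's remark that $\mathcal L_{a_iV_I+b_iIV_I}$ annihilates horizontal basic tensors.

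Drop your first, connection-based computation of $Y(a_j)$. The two torsion terms there add up to $-H(Y,V_I,Z_j)$ instead of cancelling by skew-symmetry; they vanish only because $\iota_{V_I}\iota_{IV_I}H=0$ by \eqref{eqn:torsionH}. Also, the correct identity is $d\eta(X_1,X_2)=+X_1(a_2)$, though this sign slip is harmless.
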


\begin{proof} 
\begin{enumerate}
\renewcommand{\theenumi}{\roman{enumi}}
\renewcommand{\labelenumi}{\textup{(\theenumi)}}
\item
    Let $W\in\{V_I,IV_I\}$. Since $W$ is Killing (items \ref{item:V_I}, \ref{item:IV_I} in Lemma \ref{lem:a_priori_symmetries}) and commutes with
    $Z_j,V_I$, and $IV_I$ (Lemma \ref{lem:commuting}), we have
    \[
        W(a_j)
        =(\mathcal L_Wg)(Z_j,V_I)
          +g([W,Z_j],V_I)+g(Z_j,[W,V_I])=0.
    \]
    The same computation gives $W(b_j)=0$.

    \item
    Applying the same argument with $W=Z_i$ gives
    $Z_i(a_j)=Z_i(b_j)=0$. The decomposition
    \eqref{eqn:Z_1Z_2} and item~\ref{item:GKBHE-coefficients-basic}
    then give $X_i(a_j)=X_i(b_j)=0$.

    \item
    For $W\in\{V_I,IV_I\}$, pairwise commutativity (Lemma \ref{lem:commuting}) and
    item~\ref{item:GKBHE-coefficients-basic} give
    \[
        0=[Z_i,W]=[a_iV_I+b_iIV_I+X_i,W]=[X_i,W].
    \]

    \item
    Expanding $[Z_1,Z_2]=0$ (Lemma \ref{lem:commuting}) and using
    items~\ref{item:GKBHE-coefficients-basic}--
    \ref{item:GKBHE-horizontal-projectable}, all terms involving the
    vertical fields vanish. Hence $[X_1,X_2]=0$.

    \item
    This follows immediately from \eqref{eqn:Z_1Z_2}, since
    $Z_i(f)=V_I(f)=IV_I(f)=0$ (item \ref{item:basic} in Lemma \ref{lem:a_priori_symmetries}).

    \item
    Each $Z_i$ preserves $g,I,\eta$, and $\eta_I$; hence it preserves
    $g^T$ and $\omega_I^T$ (items \ref{item:V_J}, \ref{item:JV_J} in Lemma \ref{lem:a_priori_symmetries}, Lemma \ref{lem:IJ_holomorphic}
    and Lemma \ref{lem:commuting}). Since these transverse tensors are basic,
    the vertical part $a_iV_I+b_iIV_I$ has zero Lie derivative on
    $g^T$ and $\omega_I^T$. Therefore
    \[
        \mathcal L_{X_i}g^T=0,
        \qquad
        \mathcal L_{X_i}\omega_I^T=0.
    \]
    Thus $X_i$ is transversely Killing and real holomorphic. The same
    conclusion holds for the transverse conformally balanced metric by
    item~\ref{item:GKBHE-horizontal-f-invariant}.

    \item
    Since $Z_i$ is Killing and commutes with $V_I$ and $IV_I$ (items \ref{item:V_J} and \ref{item:JV_J} in Lemma \ref{lem:a_priori_symmetries} and Lemma \ref{lem:commuting}), it
    preserves both $\eta$ and $\eta_I$. Cartan's formula and the
    basicness of $d\eta,d\eta_I$ (see Subsection \ref{subsec:BHE-structures}) therefore give
    \[
        0=\mathcal L_{Z_i}\eta
          =\iota_{X_i}d\eta+da_i,
        \qquad
        0=\mathcal L_{Z_i}\eta_I
          =\iota_{X_i}d\eta_I+db_i.
    \]

    \item
    By items~\ref{item:GKBHE-coefficients-invariant} and
    \ref{item:GKBHE-curvature-potentials},
    \[
        d\eta(X_1,X_2)=-X_2(a_1)=0,
        \qquad
        d\eta_I(X_1,X_2)=-X_2(b_1)=0.
    \]
\end{enumerate}
\end{proof}

\begin{rmk} \label{rmk:changing_J_sign}
Let $(I,J,g)$ be a GKBHE structure. Then $(I,-J,g)$ is also GKBHE. As explained in Section~\ref{subs:GK}, the triple $(I,-J,g)$ remains generalized Kähler. It therefore remains only to verify that its Bismut Ricci form vanishes. Replacing $J$ by $-J$ leaves the Bismut connection unchanged, so $\mathrm{R}^{B,-J}=\mathrm{R}^{B,J}.$ Since the definition of the Bismut Ricci form is linear in the complex structure (see equation~\eqref{eqn:rho_B}), we obtain $\rho^{B,-J}=-\rho^{B,J}=0.$
\end{rmk}
\subsection{The horizontal rank $r$}
In this subsection, we introduce the \emph{horizontal rank} of a compact GKBHE structure. This notion should not be confused with the rank of a generalized K\"ahler--Ricci soliton introduced in \cite{ABSU}.

\begin{defn}
Let $(M,I,J,g)$ be a connected compact GKBHE manifold. Set $Z_1:=V_J$ and $Z_2:=JV_J$,
and let \(X_i\) denote the \(\mathcal H\)-component of \(Z_i\) with respect to the orthogonal splitting \eqref{eqn:splitting}. Thus $Z_i=a_iV_I+b_iIV_I+X_i$, for $i=1,2$. We define the \emph{horizontal rank} of the GKBHE structure by
\begin{equation}\label{eqn}
r
:=\max_{p\in M}
\dim_{\mathbb R}\big(\operatorname{span}
\{X_1(p),X_2(p)\}\big)
\in\{0,1,2\}.
\end{equation}
\end{defn}
\begin{rmk}
The definition above applies to non-K\"ahler GKBHE structures.
In the K\"ahler case, the canonical vector fields $V_I$ and $V_J$
vanish, and we set $r=0$ by convention.
\end{rmk}
\begin{rmk}
The GKBHE condition has a natural hypercomplex counterpart in generalized hyperK\"ahler geometry \cite{Bredthauer}. A generalized hyperK\"ahler structure consists of two strong HKT structures $(g,I,J,K)$ and $(g,\hat I,\hat J,\hat K)$ sharing the same metric and having opposite Bismut torsion forms. Since each strong HKT structure induces a BHE structure for every complex structure in its hypercomplex triple, choosing one complex structure from each triple yields a GKBHE structure. This suggests extending the present analysis to the hypercomplex setting. Indeed, in the compact, non-hyperK\"ahler case, the two strong HKT structures determine canonical quadruples of Killing vector fields
\[
V_I,\ IV_I,\ JV_I,\ KV_I,
\qquad
V_{\hat I},\ \hat I V_{\hat I},\ \hat J V_{\hat I},\ \hat K V_{\hat I},
\]
parallel with respect to their respective Bismut connections \cite{BFGV,Papadopoulos}. If the manifold is also simply connected and eight-dimensional, each quadruple spans a Lie algebra isomorphic to $\mathfrak{u}(1)\oplus\mathfrak{su}(2)$ \cite{BFGV}.
\end{rmk}
We conclude this section by considering the case of horizontal rank zero.

\begin{prop} \label{prop:rank0}
Let $(M,I,J,g)$ be a connected compact GKBHE manifold with \(r=0\). Then \((I,g)\) and $(J,g)$ are K\"ahler.
\end{prop}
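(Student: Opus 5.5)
The plan is to argue by contradiction. Suppose $(I,g)$ is not K\"ahler. Then, as observed after Lemma~\ref{lem:norm}, $V_I$ and $V_J$ are nonzero and we may normalize as in \eqref{eq:GKBHE-unit-normalization}, so that the splitting \eqref{eqn:splitting} and the decomposition \eqref{eqn:Z_1Z_2} are available. The hypothesis $r=0$ means $X_1\equiv X_2\equiv 0$. Hence $V_J=a_1V_I+b_1IV_I$ and $JV_J=a_2V_I+b_2IV_I$. By Proposition~\ref{prop:GKBHE-horizontal-symmetries}\ref{item:GKBHE-curvature-potentials}, $da_i=-\iota_{X_i}d\eta=0$ and $db_i=-\iota_{X_i}d\eta_I=0$. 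Since $M$ is connected, the coefficients $a_i,b_i$ are constants. Moreover $V_J$ and $JV_J$ are orthonormal, so $\operatorname{span}\{V_J,JV_J\}=\mathcal F_I$ at every point.

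The key step is to compare the two Bismut connections on this plane. Since $V_I$ and $IV_I$ are $\nabla^{B,I}$-parallel (Lemma~\ref{lem:a_priori_symmetries}) and the coefficients are constant, both $V_J$ and $JV_J$ are $\nabla^{B,I}$-parallel. They are also $\nabla^{B,J}$-parallel, by items~\ref{item:V_J} and~\ref{item:JV_J} of Lemma~\ref{lem:a_priori_symmetries}. By \eqref{eq:Bismut_connections}, for every vector field $Y$ and every $Z\in\{V_J,JV_J\}$ we get
\[
0=g(\nabla^{B,I}_YZ,\cdot)-g(\nabla^{B,J}_YZ,\cdot)=H(Y,Z,\cdot).
\]
So $\iota_ZH=0$ for $Z=V_J,JV_J$. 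Because these two fields span $\mathcal F_I$, this gives $\iota_{V_I}H=\iota_{IV_I}H=0$.

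The last step reads off the curvatures from \eqref{eqn:torsionH}. There $H=\eta\wedge d\eta+\eta_I\wedge d\eta_I+H^T$, where $d\eta$, $d\eta_I$ and $H^T$ are basic and horizontal, and $\eta,\eta_I$ are dual to the orthonormal pair $V_I,IV_I$. Contracting gives $\iota_{V_I}H=d\eta$ and $\iota_{IV_I}H=d\eta_I$. Therefore $d\eta_I=0$, which contradicts $\operatorname{tr}_{\omega^T}d\eta_I=-4$ from \eqref{eq:BHE-curvature-traces} in Proposition~\ref{prop:compact-BHE-transverse-equations}. Hence $(I,g)$ is K\"ahler, so $H=0$ and $(J,g)$ is K\"ahler as well. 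Both are then Ricci-flat, since the BHE condition reduces to the Ricci-flat K\"ahler condition.

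The only delicate point is checking the contraction identities $\iota_{V_I}H=d\eta$ and $\iota_{IV_I}H=d\eta_I$. This uses the horizontality of $d\eta$, $d\eta_I$ and $H^T$; these forms are basic, so they are annihilated by the vertical fields. Everything else is direct.
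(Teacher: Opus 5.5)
Your proof is correct and follows essentially the same route as the paper. Constancy of the coefficients makes the $J$-canonical fields parallel for both Bismut connections, which forces a vertical curvature form to vanish and contradicts $\operatorname{tr}_{\omega^T}d\eta_I=-4$.

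The differences are minor. You get constancy directly from item~\ref{item:GKBHE-curvature-potentials} of Proposition~\ref{prop:GKBHE-horizontal-symmetries} rather than from the Killing computation. You also use both $V_J$ and $JV_J$ to obtain $d\eta_I=\iota_{IV_I}H=0$ and then apply the trace identity for $(I,g)$. The paper instead uses only $JV_J$, deduces $d(JV_J)^\flat=0$, and invokes the trace identity for $(J,g)$.
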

\begin{proof}
Since \(r=0\), we have \(X_1=X_2=0\), and hence $\mathcal F_J
:=\operatorname{span}\{V_J,JV_J\}=\mathcal F_I.$
In particular,
\begin{equation}\label{eqn:Z_2}
Z_2=a_2V_I+b_2IV_I.
\end{equation}

By item \ref{item:GKBHE-coefficients-basic} in Proposition~\ref{prop:GKBHE-horizontal-symmetries},
the functions \(a_2\) and \(b_2\) are basic. We claim that they are in fact constant. Indeed, let \(Y\in\mathcal H\). Since \(Z_2,V_I\), and \(IV_I\) are Killing (items \ref{item:V_I}, \ref{item:IV_I}, \ref{item:JV_J} in Lemma \ref{lem:a_priori_symmetries}) and Lemma \ref{lem:commuting} holds,
\[
0=(\mathcal L_{Z_2}g)(Y,V_I)=Y(a_2),
\qquad
0=(\mathcal L_{Z_2}g)(Y,IV_I)=Y(b_2).
\]
Thus \(da_2\) and \(db_2\) vanish on \(\mathcal H\), and hence, using that \(a_2\) and \(b_2\) are basic, they are indeed constant.

Now \(Z_2=JV_J\) is \(\nabla^{B,J}\)-parallel by item \ref{item:JV_J} in Lemma~\ref{lem:a_priori_symmetries}. Since \(a_2,b_2\) are constant and \(V_I,IV_I\) are \(\nabla^{B,I}\)-parallel (items \ref{item:V_I}, \ref{item:IV_I} in Lemma \ref{lem:a_priori_symmetries}), Equation \eqref{eqn:Z_2} also implies that \(Z_2\) is \(\nabla^{B,I}\)-parallel. The generalized K\"ahler condition then gives that \(Z_2\) is Levi-Civita parallel. This implies that $Z_2^\flat$ is closed.

On the other hand, applying \eqref{eq:BHE-curvature-traces} to the BHE structure \((J,g)\), whose canonical vertical one-form associated with \(JV_J\) is \(Z_2^\flat\), gives $\operatorname{tr}_{\omega_J^T}dZ_2^\flat=-4$, contradicting \(dZ_2^\flat=0\). Then $V_J=JV_J=0$ and $(J,g)$ is K\"ahler. The generalized K\"ahler condition gives that $(I,g)$ is also K\"ahler, concluding the proof.
\end{proof}

\subsection{The horizontal rank $1$}

In general, the horizontal distribution $\mathcal H$ associated with $(I,g)$ need not be $J$-invariant. Equivalently, $J$ need not preserve the splitting $TM=\mathcal F_I\oplus\mathcal H$, but may mix its vertical and horizontal components. Thus, $J$ is not necessarily adapted to the local principal $\mathbb R^2$-bundle description determined by the canonical foliation $\mathcal F_I$ (cf. Subsection \ref{subsec:BHE-structures}). In the rank-zero case, $J$ is adapted to the above splitting and, as shown in Proposition \ref{prop:rank0}, the structure is necessarily K\"ahler. We therefore focus on the rank-one case. 
In this subsection, we collect some general results about the case $r=1$. Observe that $r=1$ implies that the structure is non-K\"ahler.

First, we prove that any compact non-K\"ahler GKBHE manifold with $r=1$ satisfies $V_I=\pm V_J$. 
\begin{rmk}\label{rmk:real_analiticity}
Standard analytic elliptic regularity applied to the  pluriclosed soliton system \eqref{eq:-pluriclosed-soliton} shows that every compact BHE metric is real analytic in suitable local coordinates. Since every Killing vector field of a real-analytic Riemannian metric is itself real analytic, the same holds for $V_I,IV_I,V_J$, and $JV_J$. Consequently, the horizontal projections $X_1$ and $X_2$ are real analytic as well.
\end{rmk}
\begin{lem} \label{lem:linear_combination}
Let $(M,I,J,g)$ be a connected compact GKBHE manifold with $r=1$. Then there exists $(c_1,c_2) \in \R^2 \setminus \{(0,0)\}$ such that $c_1 X_1+c_2 X_2=0$.
\end{lem}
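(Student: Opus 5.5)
Since $r=1$, at every point $p\in M$ the vectors $X_1(p),X_2(p)$ are linearly dependent; equivalently the horizontal bivector $X_1\wedge X_2$ vanishes identically on $M$. The content of the lemma is that the dependence coefficients can be taken \emph{constant}. The plan is to find an open set on which the dependence holds with functions, show those functions are locally constant using the Killing/commuting properties of $Z_1,Z_2$ from Lemma~\ref{lem:a_priori_symmetries}, Lemma~\ref{lem:commuting} and Proposition~\ref{prop:GKBHE-horizontal-symmetries}, and then propagate the constant relation to all of $M$ by real analyticity (Remark~\ref{rmk:real_analiticity}).

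\emph{Step 1 (local relation).} Since $r=1$, the open set $U:=\{X_1\neq0\}\cup\{X_2\neq0\}$ is nonempty. Say $X_1(p_0)\neq 0$ and work on a connected open neighbourhood $W$ of $p_0$ where $X_1\neq0$. There $X_2=\lambda X_1$ with $\lambda=g(X_2,X_1)/|X_1|^2$ smooth. \emph{Step 2 (constancy of $\lambda$).} By Proposition~\ref{prop:GKBHE-horizontal-symmetries}\ref{item:GKBHE-horizontal-commuting}, $0=[X_1,X_2]=[X_1,\lambda X_1]=X_1(\lambda)X_1$, so $X_1(\lambda)=0$. By item~\ref{item:GKBHE-horizontal-projectable}, $X_1,X_2$ commute with $V_I,IV_I$, hence $V_I(\lambda)=IV_I(\lambda)=0$, i.e.\ $\lambda$ is basic. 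It remains to kill the derivatives of $\lambda$ in the other horizontal directions. For this I would use that $X_1,X_2$ project to transverse Killing fields (item~\ref{item:GKBHE-transverse-automorphisms}): on the local leaf space $M^T$, $X_2=\lambda X_1$ with both Killing for $g^T$. Writing the Killing equation, $\mathcal L_{\lambda X_1}g^T=\lambda\mathcal L_{X_1}g^T+d\lambda\odot X_1^\flat=d\lambda\odot X_1^\flat$, which must vanish; since $X_1^\flat\neq0$ on $W$, a symmetric product $d\lambda\odot X_1^\flat=0$ forces $d\lambda=0$. Hence $\lambda$ is constant on $W$, say $\lambda\equiv\lambda_0$, giving $c_1X_1+c_2X_2=0$ on $W$ with $(c_1,c_2)=(\lambda_0,-1)\neq(0,0)$. (If instead only $X_2\neq0$ somewhere, swap roles.)

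\emph{Step 3 (globalization).} The vector field $c_1X_1+c_2X_2$ is real analytic on $M$ by Remark~\ref{rmk:real_analiticity}, and vanishes on the nonempty open set $W$; since $M$ is connected, it vanishes identically. The step I expect to need most care is Step 2: one must make sure the transverse Killing argument is legitimate (the computation can be done directly on $M$ with the basic tensor $g^T$, since $\mathcal L_{X_i}g^T=0$ was established on $M$ itself, so no leaf space is really needed), and that the vanishing of the symmetric product $d\lambda\odot X_1^\flat$ indeed forces $d\lambda=0$ — which holds because evaluating on $(X_1,X_1)$ gives $X_1(\lambda)|X_1|^2$ and on $(X_1,Y)$ with $Y\perp X_1$ gives $|X_1|^2Y(\lambda)$ (up to factor), so all horizontal derivatives vanish, while vertical ones vanish by basicness.
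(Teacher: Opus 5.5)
Your proof is correct and follows the paper's argument: proportionality on a neighbourhood where one of the horizontal fields is nonzero, constancy of the ratio from basicness together with the transverse Killing property (item~\ref{item:GKBHE-transverse-automorphisms}), and globalization by real analyticity. Your explicit identity $\mathcal L_{\lambda X_1}g^T=d\lambda\odot g^T(X_1,\cdot)$ spells out the step the paper only states in words, and your use of $[X_1,X_2]=0$ is redundant but harmless.
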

\begin{proof}
If $X_2\equiv0$, the conclusion holds with $(c_1,c_2)=(0,1)$. Suppose therefore that $X_2$ does not vanish identically, and choose $p\in M$ such that $X_2(p)\neq0$. Let $\mathcal U_p$ be a connected neighborhood of $p$ on which $X_2$ is nowhere vanishing. Since $r=1$, there exists $\lambda\in C^\infty(\mathcal U_p)$ such that $X_1=\lambda X_2$ on $\mathcal U_p$. By item \ref{item:GKBHE-horizontal-projectable} in Proposition \ref{prop:GKBHE-horizontal-symmetries}, both $X_1$ and $X_2$ commute with $V_I$ and $IV_I$. Applying these Lie derivatives to $X_1=\lambda X_2$ shows that $V_I(\lambda)=IV_I(\lambda)=0$, so $\lambda$ is basic. Moreover, by item \ref{item:GKBHE-transverse-automorphisms} in Proposition \ref{prop:GKBHE-horizontal-symmetries}, $X_1$ and $X_2$ are transversely Killing. Since a non-vanishing Killing vector field and a smooth multiple of it can both be Killing only if the multiplying function is constant, $\lambda$ is transversely constant. Together with its basicness, this shows that $\lambda\equiv\lambda_0$ on $\mathcal U_p$ for some $\lambda_0\in\R$.

By Remark \ref{rmk:real_analiticity}, the vector field $X_1-\lambda_0X_2$ is real analytic. Since it vanishes on the nonempty open set $\mathcal U_p$, it vanishes identically on $M$. The conclusion follows by taking $(c_1,c_2)=(1,-\lambda_0)$.
\end{proof}
Using the preceding lemma, we now prove the desired result.
\begin{prop}\label{prop:linear_dependence}
Let $(M,I,J,g)$ be a connected compact GKBHE manifold with $r=1$. Then $V_I=c V_J$, $c \in \{-1,1\}$.
\end{prop}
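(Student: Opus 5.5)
The plan is to use Lemma~\ref{lem:linear_combination} to produce a constant linear combination of $V_J$ and $JV_J$ that lies in $\mathcal F_I$. We then show that this combination is Levi-Civita parallel, following the argument of Proposition~\ref{prop:rank0}. Finally, the curvature trace identities \eqref{eq:BHE-curvature-traces}, applied to both $(J,g)$ and $(I,g)$, will force all the coefficients.

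\emph{Step 1.} Take $(c_1,c_2)\neq(0,0)$ with $c_1X_1+c_2X_2=0$, given by Lemma~\ref{lem:linear_combination}, and set $W:=c_1Z_1+c_2Z_2=c_1V_J+c_2JV_J$. By \eqref{eqn:Z_1Z_2},
\[
W=A\,V_I+B\,IV_I,\qquad A:=c_1a_1+c_2a_2,\quad B:=c_1b_1+c_2b_2 .
\]
The functions $A,B$ are basic by Proposition~\ref{prop:GKBHE-horizontal-symmetries}\ref{item:GKBHE-coefficients-basic}. They are also transversally constant, by exactly the computation in Proposition~\ref{prop:rank0}: $W$, $V_I$ and $IV_I$ are Killing and pairwise commute (Lemmas~\ref{lem:a_priori_symmetries} and~\ref{lem:commuting}), so for $Y\in\mathcal H$ we get $0=(\mathcal L_Wg)(Y,V_I)=Y(A)$, and similarly $Y(B)=0$. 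Hence $A,B$ are constants.

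\emph{Step 2.} With constant coefficients, $W$ is $\nabla^{B,I}$-parallel, because $V_I$ and $IV_I$ are. It is also $\nabla^{B,J}$-parallel, because $V_J$ and $JV_J$ are. Since $\nabla=\tfrac12(\nabla^{B,I}+\nabla^{B,J})$ by \eqref{eq:Bismut_connections}, $W$ is Levi-Civita parallel, and therefore
\[
0=dW^\flat=c_1\,dV_J^\flat+c_2\,d(JV_J)^\flat .
\]
We now apply the trace identities \eqref{eq:BHE-curvature-traces} to the BHE structure $(J,g)$ with its unit canonical fields: $\operatorname{tr}_{\omega_J^T}dV_J^\flat=0$ and $\operatorname{tr}_{\omega_J^T}d(JV_J)^\flat=-4$. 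Tracing the displayed identity gives $-4c_2=0$. So $c_2=0$, hence $c_1\neq0$, and
\[
V_J=\alpha V_I+\beta IV_I,\qquad \alpha=A/c_1,\ \beta=B/c_1\ \text{constants}.
\]

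\emph{Step 3.} The same reasoning shows that $V_J$ is Levi-Civita parallel, so $0=dV_J^\flat=\alpha\,d\eta+\beta\,d\eta_I$. Tracing with $\omega_I^T$ and using \eqref{eq:BHE-curvature-traces} for $(I,g)$, namely $\operatorname{tr}_{\omega_I^T}d\eta=0$ and $\operatorname{tr}_{\omega_I^T}d\eta_I=-4$, gives $\beta=0$. Thus $V_J=\alpha V_I$. The normalization $|V_I|=|V_J|=1$ from Lemma~\ref{lem:norm} and \eqref{eq:GKBHE-unit-normalization} yields $\alpha\in\{\pm1\}$. Writing $c:=\alpha^{-1}=\alpha$, we obtain $V_I=cV_J$ with $c\in\{-1,1\}$.

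\emph{Main obstacle.} The delicate point is the use of \eqref{eq:BHE-curvature-traces} for $(J,g)$ in Step~2. It requires the transverse structure of $\mathcal F_J$ and the unit-length normalization of the canonical fields of $(J,g)$, exactly as in the rank-zero proof, and it is valid as a pointwise identity of functions wherever the foliation is defined, which is everywhere since the structure is non-K\"ahler. Once one accepts, as Proposition~\ref{prop:rank0} already does, that Proposition~\ref{prop:compact-BHE-transverse-equations} applies verbatim to $(J,g)$, the remaining steps are linear algebra.
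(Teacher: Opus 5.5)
Your proposal is correct and follows the paper's proof essentially verbatim. You use the same constant combination $W=c_1V_J+c_2JV_J=AV_I+B\,IV_I$, the same Killing argument for the constancy of $A,B$, Levi-Civita parallelism from the two Bismut connections, and the two curvature trace identities. The only difference is that you apply the $(J,g)$ trace before the $(I,g)$ trace, which is immaterial.
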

\begin{proof}
Let $(c_1,c_2)\in\R^2\setminus{(0,0)}$ be as in Lemma \ref{lem:linear_combination}, and consider the Killing vector field $Z:=c_1Z_1+c_2Z_2$. Since $c_1X_1+c_2X_2=0$, we have
\begin{equation}\label{eqn:Z}
Z=(c_1a_1+c_2a_2)V_I+(c_1b_1+c_2b_2)IV_I.
\end{equation}
Set $A:=c_1a_1+c_2a_2$ and $B:=c_1b_1+c_2b_2$. By item \ref{item:GKBHE-coefficients-basic} in Proposition \ref{prop:GKBHE-horizontal-symmetries}, both $A$ and $B$ are basic. Moreover, for every $Y\in\mathcal H$, the fact that $Z,V_I$, and $IV_I$ are Killing gives $(\mathcal L_Zg)(Y,V_I)=Y(A)=0$ and $(\mathcal L_Zg)(Y,IV_I)=Y(B)=0$. Thus $dA$ and $dB$ vanish on $\mathcal H$, while their basicness implies that they also vanish on $\mathcal F_I$. Hence $A$ and $B$ are constant.

Since $V_I$ and $IV_I$ are $\nabla^{B,I}$-parallel, Equation \eqref{eqn:Z} shows that $Z$ is $\nabla^{B,I}$-parallel. On the other hand, $Z=c_1V_J+c_2JV_J$ is $\nabla^{B,J}$-parallel. The generalized K\"ahler condition therefore implies that $\nabla Z=0$. In particular, $dZ^\flat=0$.

Taking the trace of $0=dZ^\flat=A \,d \eta +B \, d \eta_I$ with respect to $\omega_I^T$ and using \eqref{eq:BHE-curvature-traces}, we obtain $0=-4B$. Thus $B=0$ and $Z=AV_I$. Applying the corresponding trace identity to the BHE structure $(J,g)$ and using $Z=c_1V_J+c_2JV_J$, we similarly obtain $0=-4c_2$. Hence $c_2=0$, and therefore $c_1\neq0$.

Since $(I,g)$ is non-K\"ahler and $|V_I|=|V_J|=1$ by Lemma \ref{lem:norm}, both vector fields are nonzero. Thus $Z=c_1V_J=AV_I$ implies that $A\neq0$. Taking norms and using again $|V_I|=|V_J|=1$, we obtain $|A|=|c_1|$. Consequently, $V_J=\frac{A}{c_1}V_I=\pm V_I$, as claimed.
\end{proof}

\begin{cor} \label{cor:V_Iparallel}
Let $(M,I,J,g)$ be a connected compact GKBHE manifold with $r=1$. Then $\nabla V_I=0$, and $d\eta=0$. Furthermore $\iota_{V_{I}}H=0$.
\end{cor}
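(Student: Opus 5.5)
The plan is to reuse the argument of Proposition~\ref{prop:linear_dependence} with the specific combination it produced. By that proposition, $V_J=cV_I$ for some $c\in\{\pm1\}$. Hence $V_I=cV_J$ is simultaneously $\nabla^{B,I}$-parallel, by item~\ref{item:V_I} of Lemma~\ref{lem:a_priori_symmetries}, and $\nabla^{B,J}$-parallel, by item~\ref{item:V_J} (a constant multiple of a parallel field is parallel). Using \eqref{eq:Bismut_connections}, we have $\nabla=\tfrac12(\nabla^{B,I}+\nabla^{B,J})$. Averaging the two parallelism conditions therefore gives $\nabla V_I=0$.

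Since $V_I$ is Levi-Civita parallel, $\nabla\eta=0$. In particular $\eta$ is Killing with skew-symmetric covariant derivative equal to zero, so $d\eta$ is twice the antisymmetrization of $\nabla\eta$, which vanishes. Thus $d\eta=0$.

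For the torsion statement, subtract the two parallelism conditions instead of adding them. We have $0=\nabla^{B,I}V_I-\nabla^{B,J}V_I=H(\cdot,V_I)$ in the sense that, for all $X,Z$,
\[
g(\nabla^{B,I}_XV_I,Z)-g(\nabla^{B,J}_XV_I,Z)=H(X,V_I,Z).
\]
Since both terms on the left vanish, $H(X,V_I,Z)=0$ for all $X,Z$, and by skew-symmetry $\iota_{V_I}H=0$. As a consistency check, \eqref{eqn:torsionH} gives $\iota_{V_I}H=d\eta-\eta\wedge\iota_{V_I}d\eta+\ldots$, which is compatible with $d\eta=0$.

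The only delicate point is making sure the identity $V_I=cV_J$ really transfers $\nabla^{B,J}$-parallelism to $V_I$. This is immediate because $c$ is a constant, so no real obstacle is expected; everything else is the elementary observation that $\nabla$ is the average of the two Bismut connections and their difference is $H$.
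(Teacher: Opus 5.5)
Your proof is correct and follows the paper's argument: the paper says $\nabla V_I=0$ and $d\eta=0$ are implicit in the proof of Proposition~\ref{prop:linear_dependence}, where the constant nonzero multiple $Z=AV_I$ is shown to be parallel for both $\nabla^{B,I}$ and $\nabla^{B,J}$ and hence for $\nabla$. You instead apply this directly to $V_I=cV_J$. The only difference is the last claim. The paper reads off $\iota_{V_I}H=0$ from the decomposition \eqref{eqn:torsionH}, using $d\eta=0$ and basicness; indeed that decomposition gives $\iota_{V_I}H=d\eta$. You obtain it from $\nabla^{B,I}-\nabla^{B,J}=g^{-1}H$ instead, which is equally valid and does not need the transverse decomposition.
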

\begin{proof}
The proof of this corollary is implicit in the proof of Proposition \ref{prop:linear_dependence}. The last claim follows by Equation \eqref{eqn:torsionH}. 
\end{proof}

\begin{rmk}
The horizontal rank-one case is closely related to Sasaki
with torsion geometry in the sense of \cite{BFFG}, the
odd-dimensional counterpart of K\"ahler with torsion geometry.
By Corollary~\ref{cor:V_Iparallel}, $V_I$ and $V_J$ are
Levi-Civita parallel, so $(M,g)$ locally splits isometrically
as $\mathbb R\times N$. The two Hermitian structures induce
on $N$ a pair of strong Sasaki with torsion structures with
opposite torsions and vanishing Ricci forms for their
respective characteristic connections.
This description suggests exploring  an odd-dimensional counterpart
of generalized K\"ahler geometry.
\end{rmk}

We show that GKBHE structures of horizontal rank one occur in naturally associated pairs corresponding to the two values \(c=\pm1\). More precisely, starting from a structure \((I,J,g)\) satisfying \(V_J=cV_I\), we construct a second GKBHE structure \((I,\widehat J,g)\) on the same manifold, with the same metric and first complex structure, for which \(V_{\widehat J}=-cV_I\). This correspondence is defined intrinsically by an orthogonal reflection and is involutive: applying it twice recovers the original structure. Consequently, the existence of a structure with either value of \(c\) implies the existence of one with the opposite value.
\begin{prop}\label{prop:GK-parallel-reflection}
Let $(M,g,J)$ be a connected Hermitian manifold with Bismut
torsion $H$. Suppose that there exists a unit vector field $V$
satisfying $\nabla V=0$ and  $\iota_VH=0.$ Denote $\eta:=V^\flat$ and set $$
Q:=\operatorname{Id}-2\eta\otimes V, \quad
\widehat J:=-QJQ.
$$
Then $(g,\widehat J)$ is Hermitian, $H_{\widehat J}=H,$ 
$
\theta_{\widehat J}=\theta_J(Q\cdot),
$ and $
\rho^{B,\widehat J}=-\rho^{B,J}.$
In particular, the CYT and BHE conditions are preserved.
\end{prop}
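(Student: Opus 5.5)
The whole argument rests on one observation: $Q$ is a parallel isometry of $g$ that preserves $H$. Since $\nabla V=0$, we get $\nabla\eta=0$ and hence $\nabla Q=0$. Clearly $Q^2=\mathrm{Id}$ and $Q$ is $g$-orthogonal. From $\iota_VH=0$, $H$ vanishes whenever any argument is $V$. So $H(QX,QY,QZ)=H(X,Y,Z)$, because $QX=X-2\eta(X)V$ differs from $X$ by a multiple of $V$.

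\textbf{Hermitian and Bismut torsion.} First, $\widehat J^2=QJQQJQ=QJ^2Q=-\mathrm{Id}$, and $\widehat J$ is $g$-orthogonal as a composition of isometries. Hence $\omega_{\widehat J}(X,Y)=-g(QJQX,Y)=-g(JQX,QY)=-\omega_J(QX,QY)$, that is, $\omega_{\widehat J}=-Q^*\omega_J$. Since $\nabla Q=0$, pullback by $Q$ commutes with $\nabla$, and hence with the antisymmetrised $d$, computed as the alternation of $\nabla$. Therefore $d\omega_{\widehat J}=-Q^*d\omega_J$. This gives
\[
H_{\widehat J}(X,Y,Z)=d\omega_{\widehat J}(\widehat JX,\widehat JY,\widehat JZ)=-d\omega_J(QQJQX,\dots)=-d\omega_J(JQX,JQY,JQZ),
\]
which equals $-H(QX,QY,QZ)=-H(X,Y,Z)$. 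That sign is wrong for the claim, so I must recheck the convention. With $\omega(X,Y)=g(IX,Y)$, I get $\omega_{\widehat J}(X,Y)=g(-QJQX,Y)=-g(JQX,QY)=-\omega_J(QX,QY)$. Then $H_{\widehat J}=d\omega_{\widehat J}(\widehat J\cdot,\dots)$. Here $\widehat J X=-QJQX$, and applying $Q$ gives $Q\widehat JX=-JQX$. So $d\omega_{\widehat J}(\widehat JX,\dots)=-d\omega_J(Q\widehat JX,\dots)=-d\omega_J(-JQX,-JQY,-JQZ)=+d\omega_J(JQX,\dots)$. Hence $H_{\widehat J}=Q^*H=H$, because the three minus signs cancel against the leading one. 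In the write-up I will track these signs carefully; this is the only delicate bookkeeping in this step. Equivalently, I will check that $\nabla^{B,\widehat J}=\nabla+\tfrac12 H$ preserves $\widehat J$. Indeed, $\nabla^{B,J}$ preserves $J$ and commutes with $Q$, since $\nabla Q=0$ and $H(\cdot,\cdot,V)=0$ gives $\tfrac12H(X,QY)=Q\,\tfrac12H(X,Y)$. So $\nabla^{B,J}\widehat J=0$, and uniqueness of the Bismut connection identifies $\nabla^{B,\widehat J}=\nabla^{B,J}$. This second route is cleaner, and I will make it the main argument.

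\textbf{Lee form and Ricci form.} By \eqref{eqn:Lee_form}, $\theta_{\widehat J}(X)=-\tfrac12\sum_i H(\widehat JX,e_i,\widehat Je_i)$. Use the orthonormal frame $\{Qe_i\}$ and substitute $\widehat J=-QJQ$ together with $Q^*H=H$. The two minus signs cancel, and we get $\theta_{\widehat J}(X)=-\tfrac12\sum H(JQX,e_i,Je_i)=\theta_J(QX)$. For the Ricci form, the curvature tensors agree, $\mathrm R^{B,\widehat J}=\mathrm R^{B,J}$, because the connections coincide. Then \eqref{eqn:rho_B} gives
\[
\rho^{B,\widehat J}(X,Y)=\tfrac12\sum\mathrm R(X,Y,\widehat Je_i,e_i)=-\tfrac12\sum \mathrm R(X,Y,QJQe_i,e_i).
\]
Changing the frame to $Qe_i$ turns this into $-\tfrac12\sum \mathrm R(X,Y,QJe_i,Qe_i)$. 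Since $\nabla^{B}$ commutes with $Q$, the curvature endomorphism $\mathrm R(X,Y)$ commutes with $Q$, and $Q$ is an isometry. Hence $\mathrm R(X,Y,QA,QB)=\mathrm R(X,Y,A,B)$, which gives $\rho^{B,\widehat J}=-\rho^{B,J}$.

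\textbf{Conclusion and main obstacle.} Integrability of $\widehat J$ (needed for "Hermitian") follows because $\widehat J$ is parallel for a connection with totally skew torsion $H$ of type compatible with $J$. Alternatively, the Nijenhuis tensor can be expressed through $\nabla\widehat J=-Q(\nabla J)Q$ together with $Q$ being a parallel isometry. More simply, $Q$ is a local isometry of the local product $\mathbb R\times N$, and $\widehat J=-Q_*JQ_*^{-1}$ is the pushforward of the integrable $-J$ under a local diffeomorphism. That diffeomorphism is the reflection $t\mapsto -t$ in the flat factor, whose differential is exactly $Q$ because $V$ is parallel. I expect this integrability step, namely realizing $Q$ as the differential of a local reflection, to be the main point; the rest is sign bookkeeping. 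Preservation of the CYT and BHE conditions is then immediate, since $H$ is unchanged (so $dH=0$ persists) and $\rho$ changes only by sign.
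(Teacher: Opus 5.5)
Apart from integrability, your argument is the paper's. The connection $D=\nabla^{B,J}$ satisfies $DQ=0$ and $D\widehat J=0$, so uniqueness of the skew-torsion Hermitian connection gives $H_{\widehat J}=H$. The curvature of $D$ commutes with $Q$, which yields $\rho^{B,\widehat J}=-\rho^{B,J}$, and the Lee form is computed in the frame $\{Qe_i\}$.

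\textbf{The gap: your reflection argument.} The gap lies in the step you yourself single out as the crux. In the reflection argument, $d\phi=Q$ only says how $\phi$ moves vectors. The pushforward is $(\phi_*J)_{(t,x)}=Q\,J_{(-t,x)}\,Q$, which involves $J$ at the mirror point. So $\widehat J=-\phi_*J$ holds only if $J$ is invariant under the flow of $V$, and parallelism of $V$ alone does not give this. You also cannot argue only on the slice $t=0$, since the Nijenhuis tensor sees the $t$-derivative of $J$ there.

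The missing fact is $\mathcal L_VJ=0$, and it does follow from the hypotheses. From $\nabla^{B,J}J=0$ one gets $\nabla_XJ=\tfrac12[J,H_X]$, where $g(H_XY,Z)=H(X,Y,Z)$. Hence $\iota_VH=0$ gives $\nabla_VJ=0$, and $\nabla V=0$ then gives $\mathcal L_VJ=\nabla_VJ=0$.

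The same fact is used silently in two other places:
\begin{itemize}
\item Your Levi-Civita variant needs it to replace $\nabla_{QJQX}J$ by $\nabla_{JQX}J$ when comparing $N_{\widehat J}$ with $N_J(Q\cdot,Q\cdot)$.
\item Your discarded first route needs it too: $d(Q^*\omega_J)=Q^*d\omega_J$ requires $\nabla_V\omega_J=0$, not just $\nabla Q=0$.
\end{itemize}

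\textbf{Your first suggestion and the paper's route.} Integrability does not follow merely because $\widehat J$ is parallel for a connection with skew torsion. On nearly K\"ahler $S^6$, the almost complex structure is parallel for the characteristic connection, which has totally skew torsion, yet it is not integrable. What is needed is that the torsion has no $(3,0)+(0,3)$ part with respect to $\widehat J$. This is exactly how the paper argues, using the Friedrich--Ivanov formula
\[
N_{\widehat J}(X,Y,Z)=H(X,Y,Z)-H(\widehat JX,\widehat JY,Z)-H(\widehat JX,Y,\widehat JZ)-H(X,\widehat JY,\widehat JZ).
\]
Substituting $\widehat J=-QJQ$ and using $Q^*H=H$ gives $N_{\widehat J}=N_J(Q\cdot,Q\cdot,Q\cdot)=0$. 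This route uses only what you already proved ($D\widehat J=0$ and $Q^*H=H$), and it is pointwise, with no local splitting.

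Once $\mathcal L_VJ=0$ is added, your reflection argument is also correct. It gives a nice geometric picture, namely that $\widehat J$ is locally $-\phi_*J$. However, it depends on the de Rham splitting and a neighborhood argument.
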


\begin{proof}
Using that $Q$ is orthogonal and $Q^2=\operatorname{Id}$, the endomorphism
$\widehat J$ is orthogonal and satisfies
$\widehat J^2=-\operatorname{Id}$. Therefore $(\widehat J, g)$ is almost Hermitian. 
Write $
D:=\nabla^{B,J}=\nabla+\tfrac12g^{-1}H.$
The hypotheses give $DV=0$. Since $D$ is metric, it follows that
$DQ=0$. This, together with $DJ=0$,
implies $D\widehat J=0$. Therefore $D$ is a metric connection with skew torsion that preserves the almost complex structure $\widehat J$. By \cite[Theorem~10.1 and its proof]{FI},
the tensor
$N_{\widehat J}(X,Y,Z):=g(N_{\widehat J}(X,Y),Z)$ is a three-form and
satisfies
\begin{equation} \label{eqn:Ninj}
\begin{split}
N_{\widehat J}(X,Y,Z)
&=H(X,Y,Z)-H(\widehat JX,\widehat JY,Z)-H(\widehat JX,Y,\widehat JZ)
-H(X,\widehat JY,\widehat JZ).
\end{split}
\end{equation}
Moreover, $Q$ reverses $V$ and fixes $V^\perp$, so
$\iota_VH=0$ implies
\begin{equation}\label{eqn:Q_invariance}
H(QX,QY,QZ)=H(X,Y,Z).
\end{equation}
Substituting $\widehat J=-QJQ$ in Equation \eqref{eqn:Ninj} and using
\eqref{eqn:Q_invariance}, we obtain
\[
\begin{aligned}
N_{\widehat J}(X,Y,Z)
&=H(QX,QY,QZ)-H(JQX,JQY,QZ)-H(JQX,QY,JQZ)-H(QX,JQY,JQZ)\\
&=N_J(QX,QY,QZ)=0.
\end{aligned}
\]
Thus $\widehat J$ is integrable. By the uniqueness of the Hermitian
connection with totally skew-symmetric torsion, $D$ is the Bismut
connection of $(\widehat J, g)$ and hence $H_{\widehat J}=H$. 

Let $\mathrm{R}^D$ denote the curvature of $D$. Since $DQ=0$, the endomorphism
$R^D(X,Y)$ commutes with $Q$. Therefore we may compute the Bismut Ricci form of $(\widehat J, g)$ as follows:
\[
\begin{aligned}
\rho^{D,\hat J}(X,Y)
&=\tfrac12\operatorname{tr}\bigl(\mathrm{R}^D(X,Y)\widehat J\bigr)=-\tfrac12\operatorname{tr}\bigl(\mathrm{R}^D(X,Y)QJQ\bigr)\\
&=-\tfrac12\operatorname{tr}\bigl(Q\mathrm{R}^D(X,Y)JQ\bigr)=-\tfrac12\operatorname{tr}\bigl(\mathrm{R}^D(X,Y)J\bigr)\\
&=-\tfrac12\operatorname{tr}\bigl(\mathrm{R}^{B,J}(X,Y)J\bigr)
=-\rho^{B,J}(X,Y),
\end{aligned}
\]
where we used the commutation with $Q$, the cyclicity of the trace,
$Q^2=\operatorname{Id}$ and the fact that $D=\nabla^{B,J}$. Hence the CYT condition is preserved.
Since the torsion is unchanged, its closedness is also preserved,
and consequently so is the BHE condition. 

Finally, let $\{e_i\}_{i=1}^{2n}$ be a local orthonormal frame.
Using the torsion formula for the Lee form in the orthonormal frame
$\{Qe_i\}$, together with \eqref{eqn:Q_invariance}, gives
\begin{align*}
\theta_{\widehat J}(X)
&=-\tfrac12\sum_i H(\widehat JX,Qe_i,\widehat JQe_i)\\
&=-\tfrac12\sum_i H(JQX,e_i,Je_i)
=\theta_J(QX).
\end{align*}
\end{proof}
\begin{cor}\label{cor:GKBHE-sign-involution}
Let $(M,I,J,g)$ be a connected compact GKBHE manifold with $r=1$. If $V_I=cV_J$ for some $c\in\{1,-1\}$, then $(I,\widehat J,g)$ is GKBHE and $V_{\widehat J}=-cV_I$.
\end{cor}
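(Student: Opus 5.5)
The plan is to apply Proposition~\ref{prop:GK-parallel-reflection} to $(J,g)$ with $V:=V_I$. Then check that $(I,\widehat J,g)$ is still generalized K\"ahler, and finally compute $V_{\widehat J}$.

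\emph{Step 1: the hypotheses hold.} By Corollary~\ref{cor:V_Iparallel}, $V_I$ is a unit vector field with $\nabla V_I=0$ and $\iota_{V_I}H=0$. The Bismut torsion of $(J,g)$ is $-H$, so also $\iota_{V_I}(-H)=0$. Proposition~\ref{prop:GK-parallel-reflection} then gives the following for $\widehat J=-QJQ$:
\begin{enumerate}
\item $(\widehat J,g)$ is Hermitian and integrable;
\item its Bismut torsion equals that of $J$, namely $-H$, so $d^c_{\widehat J}\omega_{\widehat J}=H=-d^c_I\omega_I$;
\item $\rho^{B,\widehat J}=-\rho^{B,J}=0$.
\end{enumerate}
Closedness of the torsion is unchanged, so $(I,g)$ and $(\widehat J,g)$ are pluriclosed with opposite torsions. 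Hence $(I,\widehat J,g)$ is generalized K\"ahler, and with $\rho^{B,I}=0$ it is GKBHE.

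\emph{Step 2: computing $V_{\widehat J}$.} Since the metric and $I$ are unchanged, the normalized soliton potential $f$ of $(I,g)$ is also the one for the new structure, by the discussion after Definition~\ref{def:GKBHE}. Proposition~\ref{prop:GK-parallel-reflection} gives $\theta_{\widehat J}=\theta_J\circ Q$, that is, $\theta_{\widehat J}^\sharp=Q\theta_J^\sharp$ because $Q$ is orthogonal and symmetric. Also $V_I(f)=0$ by Lemma~\ref{lem:a_priori_symmetries}, so $\grad f\perp V_I$ and $Q\grad f=\grad f$. Therefore
\[
V_{\widehat J}=\tfrac12\bigl(Q\theta_J^\sharp-\grad f\bigr)=\tfrac12 Q\bigl(\theta_J^\sharp-\grad f\bigr)=QV_J .
\]
Because $V_J=cV_I$ (equivalently $V_I=cV_J$ with $c=\pm1$) and $QV_I=-V_I$, we get $V_{\widehat J}=-cV_I$.

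\emph{Main obstacle.} The most delicate point is the generalized K\"ahler compatibility: Proposition~\ref{prop:GK-parallel-reflection} must give torsion exactly $-H$ for $\widehat J$, with the same sign convention $H_J=-d^c_J\omega_J$. This requires $\iota_{V_I}H=0$ so that $Q$ preserves the torsion. A second point is confirming that the soliton potential $f$ does not change, which holds because it is determined by $(I,g)$. If one also wants the new structure to have horizontal rank one, this follows because $\widehat JV_{\widehat J}=-cQJV_I$, whose horizontal part is the horizontal part of $-cJV_I$.
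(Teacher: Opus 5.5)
Your proposal is correct and follows essentially the same route as the paper. Corollary~\ref{cor:V_Iparallel} gives $\nabla V_I=0$ and $\iota_{V_I}H=0$; Proposition~\ref{prop:GK-parallel-reflection} applied to $(J,g)$ then gives the GKBHE property with the torsion $-H$ unchanged, and $f$ stays the soliton potential. The only cosmetic difference is that you obtain $V_{\widehat J}=QV_J$ from $\theta_{\widehat J}^\sharp=Q\theta_J^\sharp$ and $Q\grad f=\grad f$, whereas the paper writes $\theta_{\widehat J}=df-2c\eta$ directly; these are the same computation.
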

\begin{proof}
By Corollary~\ref{cor:V_Iparallel}, $V_I$ is Levi-Civita parallel,
and $\iota_{V_I}H=0$. Since $H_J=-H$,
Proposition~\ref{prop:GK-parallel-reflection} applies to $(J,g)$ and yields that $(I,\widehat J, g)$ is GKBHE.
The normalized soliton potential remains $f$, since $(g,\widehat J)$
has the same metric and Bismut torsion as $(J,g)$.
Since $V_J=cV_I$, we have $\theta_J=df+2c\eta$.
By Proposition \ref{prop:GK-parallel-reflection}, the basicness of $f$ gives $
\theta_{\widehat J}=df-2c\eta,$
and hence $
V_{\widehat J}
=\tfrac12\bigl(\theta_{\widehat J}^{\sharp}
-\operatorname{grad} f\bigr)
=-cV_I.
$
\end{proof}
\section{GKBHE $3$-folds: general theory in the horizontal rank $1$ case}
Starting from this section, we work in real dimension $6$.
Let $(M^6,I,J,g)$ be a connected compact GKBHE manifold with $r=1$; then $(I,g)$ is non-K\"ahler and the transverse geometry of the BHE structure $(I, g)$ is described in Theorem \ref{thm:BHE-six-to-four}. Our aim is to determine the additional constraints imposed on this local transverse geometry by the existence of a second complex structure $J$ such that $(J,g)$ is BHE and $(I,J,g)$ is generalized K\"ahler. We shall prove that, in this setting, the local transverse geometry is governed by a single nonnegative function $F$ such that $I^T(\grad_{g_K^T}F)$ is $I^T$-holomorphic and, on the dense open set where $F>0$, $\log F$ is a Ricci potential for the transverse K\"ahler metric.

By Proposition \ref{prop:linear_dependence}, we have $Z_1=cV_I$ for some $c\in\{\pm1\}$, while $Z_2=a_2V_I+b_2IV_I+X_2$. Since $Z_1$ and $Z_2$ are orthogonal, it follows that $a_2=0$. Henceforth, setting $b:=b_2$ and $X:=X_2$, we shall write
\begin{equation} \label{eqn:V_J/JV_J}
    V_J=cV_I,\qquad JV_J=bIV_I+X.
\end{equation}
Since this decomposition is orthogonal and $|JV_J|=|IV_I|=1$ (see Lemma \ref{lem:norm}), we have
\begin{equation} \label{eqn:norm_X}
 |X|^2=1-b^2.   
\end{equation}
In particular, for each $p \in M$, $b(p)\in[-1,1]$.
\subsection{The Poisson tensor $\sigma$}
In this subsection, we study the real Poisson tensor $\sigma=\tfrac12[I,J]g^{-1}$ (cf. Proposition \ref{prop:commutator-poisson}) associated with a connected, compact GKBHE manifold $(M^6,I,J,g)$ with $r=1$. Henceforth, we assume that $I$ and $J$ induce the same orientation on $M$. This entails no loss of generality, since otherwise we may replace $J$ by $-J$ (cf. Remark \ref{rmk:changing_J_sign}). This assumption will remain in force throughout the paper. 

We introduce the open set on which the Poisson tensor $\sigma$ does not vanish. Recall that, whenever $\sigma_p\neq0$,  $\dim_\R \mathrm{Im}[I,J]_p=4$ (cf. Subsection \ref{subs:GK}).

\begin{defn}
Let $(M^6,I,J,g)$ be a connected compact GKBHE manifold with $r=1$. We define
\begin{equation}\label{eqn:O}
\mathcal O:=\{p\in M\mid [I,J]_p\neq0\}=\{p\in M\mid \sigma_p\neq0\}.
\end{equation}
\end{defn}

Fix $p\in\mathcal O$ and set $\mathcal K_p:=\ker[I,J]_p,$ and $\mathcal {D}_p:=\operatorname{Im}[I,J]_p.$ Since $[I,J]_p$ is skew-adjoint, its kernel and image are orthogonal. Moreover, using that $\dim_\R \operatorname{Im}[I,J]_p=4$ (cf. Subection \ref{subs:GK}),
\begin{equation} \label{eqn:splitting2}
    T_pM=\mathcal K_p\oplus^\perp\mathcal {D}_p,
\qquad
\dim_{\mathbb R}\mathcal K_p=2,
\qquad
\dim_{\mathbb R}\mathcal {D}_p=4.
\end{equation}
The endomorphism $[I,J]_p$ is invertible on $\mathcal {D}_p$ and since $[I,J]$ anticommutes with $I$ and $J$, both $\mathcal K_p$ and $\mathcal {D}_p$ are invariant under $I_p$ and $J_p$.

The restrictions of $I_p$ and $J_p$ to $\mathcal {D}_p$ induce the same orientation. Indeed, suppose that they induce opposite orientations. With respect to the orientation induced by $I_p$, the restrictions of $\omega_I(p)$ and $\omega_J(p)$ to $\mathcal {D}_p$ would then be self-dual and anti-self-dual, respectively. However, it would follow that $I_p$ and $J_p$ commute on $\mathcal {D}_p$, contradicting the invertibility of $[I,J]_p$ there.

Finally, on the two-dimensional space $\mathcal K_p$, the orthogonal complex structures $I_p$ and $J_p$ differ by a sign, that is $I_p=\pm J_p$. Since they induce the same orientation on $T_pM$ and their restrictions to $\mathcal {D}_p$ induce the same orientation, their restrictions to $\mathcal K_p$ must also induce the same orientation. Therefore, $I_p|_{\mathcal K_p}=J_p|_{\mathcal K_p}.$

\begin{claim}  \label{claim:pi_pm}
Consider the Poisson tensors $\pi_\pm=\tfrac12(I\pm J)g^{-1}$ introduced in \eqref{eq:poisson-plus-minus}. For every $p\in\mathcal O$, $\dim_{\mathbb R}\operatorname{Im}\pi_{-}(p)=4,$ and $\dim_{\mathbb R}\operatorname{Im}\pi_{+}(p)=6.$
In particular, $I+J$ is invertible on $\mathcal O$.
\end{claim}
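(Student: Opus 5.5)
Fix $p\in\mathcal O$ and work pointwise with the orthogonal splitting \eqref{eqn:splitting2}, $T_pM=\mathcal K_p\oplus^\perp\mathcal D_p$. Both subspaces are invariant under $I_p$ and $J_p$, so they are invariant under $I\pm J$ as well. Since $g^{-1}$ is an isomorphism, $\operatorname{Im}\pi_\pm(p)=\operatorname{Im}(I\pm J)_p$. It therefore suffices to compute the rank of $I\pm J$ on each summand separately.

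On $\mathcal K_p$ we have shown that $I_p|_{\mathcal K_p}=J_p|_{\mathcal K_p}$. Hence $(I-J)|_{\mathcal K_p}=0$, while $(I+J)|_{\mathcal K_p}=2I_p|_{\mathcal K_p}$ is invertible. This summand contributes $0$ to the rank of $I-J$ and $2$ to the rank of $I+J$.

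On $\mathcal D_p$ we use the key fact that $[I,J]$ is invertible there. Suppose $v\in\mathcal D_p$ satisfies $(I\mp J)v=0$, i.e.\ $Jv=\pm Iv$. The subspace $\operatorname{span}\{v,Iv\}$ is then preserved by both $I$ and $J$, and on it $J=\pm I$, so $[I,J]$ vanishes on it. Invertibility forces $v=0$. Consequently both $I+J$ and $I-J$ are injective on $\mathcal D_p$, hence invertible there, and each contributes rank $4$.

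Adding the contributions gives $\dim\operatorname{Im}\pi_-(p)=0+4=4$ and $\dim\operatorname{Im}\pi_+(p)=2+4=6$. In particular $(I+J)_p$ is invertible for every $p\in\mathcal O$.

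The only substantive input is the identity $I|_{\mathcal K_p}=J|_{\mathcal K_p}$, which relies on the orientation argument given just before the claim; once that is in hand, every step above is linear algebra.
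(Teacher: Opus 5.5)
Your proof is correct and follows essentially the same route as the paper: you use the $I$- and $J$-invariant splitting $T_pM=\mathcal K_p\oplus^\perp\mathcal D_p$ and the identity $I_p=J_p$ on $\mathcal K_p$, and you observe that $(I\pm J)v=0$ forces $[I,J]v=0$, hence $v=0$ for $v\in\mathcal D_p$. The only cosmetic difference is that you count ranks summand by summand, whereas the paper directly identifies $\ker(I-J)_p=\mathcal K_p$ and $\ker(I+J)_p=0$.
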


\begin{proof}
Since $g^{-1}$ is an isomorphism, $\dim_{\mathbb R}\operatorname{Im}\pi_{\pm}=\dim_{\mathbb R}\operatorname{Im}(I\pm J)$. We first observe that both $I_p-J_p$ and $I_p+J_p$ are invertible on $\mathcal D_p$. Indeed, let $\varepsilon\in \{\pm1 \}$ and suppose that $Y\in\mathcal D_p$ satisfies $(I_p+\varepsilon J_p)Y=0$. Then $I_pY=-\varepsilon J_pY$, and applying $I_p$ and $J_p$, respectively, gives $I_pJ_pY=\varepsilon Y=J_pI_pY$. Thus $[I,J]_pY=0$, and hence $Y\in\mathcal K_p\cap\mathcal D_p={0}$. Therefore, $(I_p+\varepsilon J_p)|_{\mathcal D_p}$ is invertible.

On the other hand, $I_p=J_p$ on $\mathcal K_p$, so $(I_p-J_p)|_{\mathcal K_p}=0$, whereas $(I_p+J_p)|_{\mathcal K_p}=2I_p$ is invertible. Consequently, $\ker(I_p-J_p)=\mathcal K_p$ and $\ker(I_p+J_p)={0}$. Since $\dim_{\mathbb R}\mathcal K_p=2$, the claim follows.
\end{proof}

We are now in a position to prove the main result of this subsection.

\begin{prop}\label{prop:vanishin_poisson}
Let $(M^6,I,J,g)$ be a connected compact non-K\"ahler GKBHE manifold with $r=1$. If $c=1$, i.e., $V_I=V_J$, then $[I,J]$ vanishes identically.
\end{prop}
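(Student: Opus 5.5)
We argue by contradiction, assuming $\mathcal O\neq\emptyset$. The idea is to use the two expressions for $\operatorname{div}_{\mu_f}\pi_+$ in Lemma~\ref{lem:divergence} on the open set $\mathcal O$, where $I+J$ is invertible by Claim~\ref{claim:pi_pm}. This will force $V_I$ to be locally a gradient of an $V_I$-invariant function, which is incompatible with $|V_I|=1$.

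\emph{Step 1: the Lee forms.} Since $I$ and $J$ induce the same orientation, Lemma~\ref{lem:divergence} applies with $\mu_f=e^{-f}dV_g$, where $f$ is the common normalized soliton potential. By definition $\theta_I-df=2V_I^\flat=2\eta$. Since $c=1$, we have $V_J=V_I$, and hence also $\theta_J-df=2V_J^\flat=2\eta$.

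\emph{Step 2: the first divergence formula.} Substituting Step~1 into \eqref{eq:divergence} gives
\[
\operatorname{div}_{\mu_f}\pi_+=-\bigl(IV_I+JV_I\bigr)=-(I+J)V_I .
\]

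\emph{Step 3: comparing with the second formula.} On $\mathcal O\subset U_+$, formula \eqref{eq:divergence_mu_nu} gives
\[
\operatorname{div}_{\mu_f}\pi_+=\pi_+\bigl(d(f-\psi_+)\bigr)=\tfrac12(I+J)\operatorname{grad}(f-\psi_+).
\]
Equating this with Step~2 and inverting $I+J$ (possible on $\mathcal O$ by Claim~\ref{claim:pi_pm}) yields
\[
V_I=-\tfrac12\operatorname{grad}(f-\psi_+)\quad\text{on }\mathcal O .
\]
The sign and normalization conventions here need care, but any nonzero constant suffices for the argument that follows.

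\emph{Step 4: invariance of $f-\psi_+$.} We check that $f-\psi_+$ is $V_I$-invariant on $\mathcal O$.
\begin{itemize}
\item $V_I(f)=0$ by item~\ref{item:basic} of Lemma~\ref{lem:a_priori_symmetries}.
\item $V_I$ is Killing and $I$-holomorphic (Lemma~\ref{lem:a_priori_symmetries}), and $J$-holomorphic (Lemma~\ref{lem:IJ_holomorphic}). Its flow therefore preserves $g$, $I$ and $J$, hence the pointwise function $\det(I+J)$, and so $V_I(\psi_+)=0$.
\end{itemize}

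\emph{Step 5: the contradiction.} Combining Steps~3 and~4, on $\mathcal O$
\[
1=|V_I|^2=g\bigl(V_I,-\tfrac12\operatorname{grad}(f-\psi_+)\bigr)=-\tfrac12\,V_I(f-\psi_+)=0,
\]
a contradiction. Hence $\mathcal O=\emptyset$, that is, $[I,J]\equiv0$.

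The step requiring most care is Step~3. One must ensure that $\psi_+$ is well defined and smooth on all of $\mathcal O$, which holds since $\mathcal O\subset U_+$ by Claim~\ref{claim:pi_pm}. One must also track the sign conventions in $\pi_+=\tfrac12(I+J)g^{-1}$, so that the identity really reads $(I+J)V_I=(I+J)\,(\text{const})\operatorname{grad}(f-\psi_+)$ with a nonzero constant. Only the nonvanishing of this constant matters.

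The hypothesis $c=1$ enters only in Step~2, where it makes both Lee-form terms equal to $2\eta$. For $c=-1$ one instead obtains $(I-J)V_I$ on the right-hand side. This vector need not lie in the image of $I+J$ in a way that produces a gradient, which is consistent with the rank-four behaviour of $\sigma$ described for $c=-1$.
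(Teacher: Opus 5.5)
Your proof is correct and is essentially the paper's argument. Assuming $\mathcal O\neq\varnothing$, you equate the two expressions for $\operatorname{div}_{\mu_f}\pi_+$ from Lemma~\ref{lem:divergence} on $\mathcal O\subset U_+$, invert $I+J$ to get $2V_I=\grad(\psi_+-f)$ (with the same sign as the paper), and pair with $V_I$ using $V_I(f)=V_I(\psi_+)=0$ to contradict $|V_I|=1$; your closing remark about $c=-1$ is only heuristic and not needed for the proof.
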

\begin{proof}
Suppose, by contradiction, that $\mathcal O\neq\varnothing$. By Claim \ref{claim:pi_pm}, $\mathcal O\subset U_+$, where $U_+$ denotes the open set introduced in Subsection \ref{subs:GK} on which $I+J$ is invertible. We may therefore apply Lemma \ref{lem:divergence} on $\mathcal O$ to the normalized soliton potential $f$. Since $V_I=V_J$, it gives
$$
(I+J)V_I
=(IV_I+JV_J)
=\tfrac 12(I+J)(-\grad f+\grad\psi_+),
$$
where $\psi_+=\tfrac12\log\det(I+J)$. Since $I+J$ is invertible on $\mathcal O$, we obtain $
2 V_I=-\grad f+\grad\psi_+.$ Taking the inner product with $V_I$ yields $2|V_I|^2=-V_I(f)+V_I(\psi_+).$ By item \ref{item:basic} in Lemma \ref{lem:a_priori_symmetries}, $f$ is basic, and hence $V_I(f)=0$. Moreover, $V_I$ preserves both $I$ and $J$ by item \ref{item:V_I} in Lemma \ref{lem:a_priori_symmetries} and Lemma \ref{lem:IJ_holomorphic}. It therefore preserves $\det(I+J)$, so that $V_I(\psi_+)=0$. We conclude that $|V_I|^2=0$, contradicting $|V_I|=1$. Thus $\mathcal O=\varnothing$, and hence $[I,J]$ vanishes identically.
\end{proof}

We introduce the following definition 
\begin{defn}
Let $(M^6,I,J,g)$ be a connected compact GKBHE manifold with $r=1$. We define
\begin{equation}\label{eqn:U}
\mathcal U:=\{p\in M\mid |b(p)|<1\}=\{p\in M\mid X(p)\neq0\}.
\end{equation}
\end{defn}
The two descriptions in \eqref{eqn:U} agree because $|X|^2=1-b^2$ (cf. Equation \eqref{eqn:norm_X}). Moreover, since in the notation \eqref{eqn:Z_1Z_2} we have $X_1=0$ and $X_2=X$, the set $\mathcal U$ is precisely the locus where the pointwise horizontal rank is equal to one.
\begin{lem} \label{lem:density_U}
The open set $\mathcal U$ is dense in $M$.
\end{lem}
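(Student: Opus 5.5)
We must show that $\mathcal U=\{X\neq 0\}$ is dense in $M$. The plan is to use real analyticity, exactly as in the proof of Lemma~\ref{lem:linear_combination}. By Remark~\ref{rmk:real_analiticity}, $X=X_2$ is a real-analytic vector field on the connected real-analytic manifold $M$. Hence so is the function $|X|^2=1-b^2$, because the metric $g$ is real analytic in suitable coordinates.

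Suppose, by contradiction, that $\mathcal U$ is not dense. Then $M\setminus\mathcal U=\{X=0\}$ contains a nonempty open set $W$. On $W$ the real-analytic vector field $X$ vanishes identically. By the identity theorem for real-analytic objects on a connected manifold, it follows that $X\equiv 0$ on all of $M$.

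But $X\equiv0$ means $X_1=X_2=0$ everywhere, since $X_1=0$ already by Proposition~\ref{prop:linear_dependence}. By the definition of the horizontal rank, this gives $r=0$, contradicting the standing assumption $r=1$. (Equivalently, if $r=0$ then Proposition~\ref{prop:rank0} would force the structure to be Kähler, whereas $r=1$ entails non-Kähler.) Hence $\mathcal U$ is dense.

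The only point requiring care is the real analyticity of $X$. It is a horizontal projection $X=JV_J-b\,IV_I$, with $b=g(JV_J,IV_I)$, and each term is built from real-analytic Killing fields and the real-analytic metric. So $X$ is real analytic in the analytic atlas provided by Remark~\ref{rmk:real_analiticity}, and this is the step I would state explicitly. The rest is formal.
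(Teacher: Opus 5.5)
Your proof is correct and follows the paper's argument: the horizontal field $X$ is real analytic by Remark~\ref{rmk:real_analiticity}, so vanishing on an open subset of $M\setminus\overline{\mathcal U}$ forces $X\equiv 0$ on $M$. Together with $X_1=0$, this gives $r=0$, contradicting $r=1$.
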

\begin{proof}
Suppose, by contradiction, that $\mathcal U$ is not dense. Then $M\setminus\overline{\mathcal U}$ contains a nonempty open subset $\mathcal V$. By definition, $X$ vanishes identically on $\mathcal V$. Since $X$ is real analytic by Remark \ref{rmk:real_analiticity}, it follows that $X$ vanishes identically on $M$. Hence, $r=0$, a contradiction.
\end{proof}
We have shown that if $V_I=V_J$, then the Poisson tensor $\sigma$ vanishes identically (Proposition \ref{prop:vanishin_poisson}). We now assume that $c=-1$ and study $\operatorname{Im}\sigma_p$ for $p \in \mathcal O\cap\mathcal U$. We first record an observation valid on all of $\mathcal O$. 

Let $p \in \mathcal{O}$. By Claim \ref{claim:pi_pm}, the Poisson tensors $\pi_\pm$ satisfy $\dim_{\mathbb R}\operatorname{Im}\pi_{-}=4,$ and $\dim_{\mathbb R}\operatorname{Im}\pi_{+}=6$ on $\mathcal O$. Therefore, the two generalized complex structures associated with $(I,J,g)$ through the Gualtieri map \cite[Equation~(1.18)]{Gualtieri2} have constant type on $\mathcal O$ \cite[Remark 1.13]{Gualtieri2}. The generalized Darboux theorem \cite[Theorem 4.3]{Gualtieri1} then implies that there exists an open neighborhood $\mathcal{V}_p \subset \mathcal{O}$ around $p$ on which both generalized complex structures are defined by nonvanishing $d_H$-closed pure spinors. Hence \cite[Proposition 4.3]{AFSU} applies locally on $\mathcal{V}_p$ and yields
$
\theta_I^\sharp-\theta_J^\sharp (p)\in\operatorname{Im}\sigma_p.
$
On the other hand, the identity $V_J=-V_I$ gives
$
\theta_I^\sharp-\theta_J^\sharp
=2(V_I-V_J)
=4V_I.
$
Consequently,
\begin{equation} \label{eqn:im_sigma}
  V_I(p)\in\operatorname{Im}\sigma_p  
\end{equation}
for every $p\in\mathcal O$. 

On $\mathcal U$, $X\neq0$ and the vector fields $V_I,IV_I,JV_J=- JV_I$, and $IJV_I$ are linearly independent. Over $\mathcal{U}$, define the following rank $4$ subbundle
$$
\mathcal E:=\operatorname{span}\{V_I,IV_I,JV_I,IJV_I\}.
$$

We have shown above that for every $p\in\mathcal O\cap\mathcal U$, $V_I(p)\in\operatorname{Im}\sigma_p$ (Equation \eqref{eqn:im_sigma}). Since $\operatorname{Im}\sigma_p$ is $I_p$- and $J_p$-invariant and has dimension $4$ (cf. Subsection \ref{subs:GK}), it follows that
\begin{equation} \label{eqn:im_sigma2}
 \operatorname{Im}\sigma_p=\mathcal E_p,
\qquad p\in\mathcal O\cap\mathcal U.   
\end{equation}
\begin{lem} \label{lem:E_J_invariant}
For either $c=\pm1$, the rank-four distribution $\mathcal E$ on $\mathcal U$ is $J$-invariant.
\end{lem}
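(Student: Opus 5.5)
We need to show that $J$ maps each of $V_I, IV_I, JV_I, IJV_I$ into $\mathcal E$. Three of these are immediate. Since $J^2=-\mathrm{Id}$, we have $J(V_I)=JV_I\in\mathcal E$ and $J(JV_I)=-V_I\in\mathcal E$. Moreover $J(IJV_I)=JIJV_I$, and this lies in $\mathcal E$ as soon as $JIV_I$ does, because $J$ acts on $\mathcal E$ linearly. So the whole statement reduces to showing
\[
JIV_I\in\mathcal E \quad\text{on }\mathcal U .
\]

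To get this, we use the decomposition \eqref{eqn:V_J/JV_J}. From $V_J=cV_I$ we get $JV_J=cJV_I$, hence
\[
cJV_I=bIV_I+X, \qquad X=cJV_I-bIV_I .
\]
Thus $X\in\mathcal E$; on $\mathcal U$ it is the nonzero horizontal element of $\mathcal E$, so $\mathcal E=\mathcal F_I\oplus\operatorname{span}\{X,IX\}$.

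Applying $J$ gives $-cV_I=bJIV_I+JX$. It therefore suffices to show $JX\in\mathcal E$: then $JIV_I=b^{-1}(-cV_I-JX)$ wherever $b\neq0$.

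To show $JX\in\mathcal E$, first compute the projections of $JX$ onto $V_I$, $IV_I$ and $X$ by orthogonality, using that $J$ is $g$-orthogonal and skew:
\begin{itemize}
\item $g(JX,X)=0$;
\item $g(JX,V_I)=-g(X,JV_I)=-c|X|^2$;
\item $g(JX,IV_I)=-g(X,JIV_I)$.
\end{itemize}
The remaining task is to show that the component of $JX$ orthogonal to $\mathcal E$ vanishes.

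For this step I would use the norm identity $|JX|^2=|X|^2=1-b^2$ together with the expansion $JIV_I$ in terms of $IJ$. The cleanest route is a Cauchy--Schwarz saturation argument applied to the unit vector $JV_I$ and the unit vector $IV_I$. From $cJV_I=bIV_I+X$ we get $g(JV_I,IV_I)=cb$. Similarly, the vector $JIV_I$ has unit length, and $g(JIV_I,V_I)=-g(IV_I,JV_I)=-cb$.

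The constraint that should force the orthogonal component to vanish is the $I$-holomorphy of $JV_J$ (item \ref{item:JV_J} of Lemma \ref{lem:a_priori_symmetries}), combined with the vanishing of $d\eta$ from Corollary \ref{cor:V_Iparallel}. By Proposition \ref{prop:GKBHE-horizontal-symmetries}\ref{item:GKBHE-curvature-potentials} we have $db=-\iota_X d\eta_I$. Also $\nabla V_I=0$, and $V_I$ preserves $J$ by Lemma \ref{lem:IJ_holomorphic}, so $\nabla_{V_I}J=-\nabla_{JV_I}$ type terms reduce to $\mathcal L_{V_I}J=0$.

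The main obstacle is exactly this: showing that the $\mathcal E^\perp$-component of $JIV_I$ vanishes. The reflection symmetry of Corollary \ref{cor:GKBHE-sign-involution} lets us treat $c=1$ and $c=-1$ simultaneously, since $Q$ preserves $\mathcal E$.

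For the case $c=-1$, equation \eqref{eqn:im_sigma2} settles the claim on $\mathcal O\cap\mathcal U$, since $\operatorname{Im}\sigma$ is $J$-invariant. On $\mathcal U\setminus\mathcal O$ we have $[I,J]=0$, so $JIV_I=IJV_I\in\mathcal E$ directly. The same commutator dichotomy works for $c=1$, because then $[I,J]\equiv0$ by Proposition \ref{prop:vanishin_poisson}. For $c=-1$ one can alternatively pass from $c=1$ to $c=-1$ via $\widehat J=-QJQ$: since $Q$ preserves $\mathcal E$, this transfers invariance between the two cases. This dichotomy argument is what I would carry out, treating the points with $b=0$ through it as well.
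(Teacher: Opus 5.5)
Your closing dichotomy argument is correct and complete. For $c=1$ it matches the paper (commutation of $I$ and $J$); for $c=-1$ it takes a different route.

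\textbf{The two routes for $c=-1$.} The paper first proves that $\mathcal O$ is dense in $M$. Otherwise holomorphicity of $\sigma^{2,0}$ forces $\sigma\equiv0$, and then the smooth splitting $TM=TM_+\oplus TM_-$, generalized Darboux coordinates and the result $\theta_I^\sharp-\theta_J^\sharp\in\operatorname{Im}\sigma$ give $4V_I\in\operatorname{Im}\sigma=0$, a contradiction. It then extends $J$-invariance from $\mathcal O\cap\mathcal U$ to $\mathcal U$ by continuity. You argue pointwise instead:
\begin{itemize}
\item at $p\in\mathcal U\cap\mathcal O$, \eqref{eqn:im_sigma2} gives $\mathcal E_p=\operatorname{Im}\sigma_p$, which is $J_p$-invariant;
\item at $p\in\mathcal U\setminus\mathcal O$ one has $[I,J]_p=0$, so $J(IV_I)=IJV_I$ and $J(IJV_I)=-IV_I$ lie in $\mathcal E_p$.
\end{itemize}

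\textbf{What each route buys.} Your split proves this lemma more cheaply. The paper's detour yields the density of $\mathcal O\cap\mathcal U$ in $\mathcal U$, which is used immediately afterwards to get $Ju=e_3$ on all of $\mathcal U$. No pointwise split works there: at a point of $\mathcal U\setminus\mathcal O$ with $c=-1$ one would get $Ju=-e_3$.

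\textbf{The reflection alternative.} Your route via $\widehat J=-QJQ$ also works and is shorter still, provided you record two checks.
\begin{itemize}
\item $\widehat JV_I=QJV_I=JV_I$, since $JV_I\perp V_I$. Hence $J$ and $\widehat J$ define the same $\mathcal E$.
\item $(I,\widehat J,g)$ meets the hypotheses of Proposition~\ref{prop:vanishin_poisson}. It is GKBHE with $V_{\widehat J}=V_I$. It has horizontal rank one, since $\widehat JV_{\widehat J}=-JV_J$ has horizontal part $-X$. And $\widehat J$ induces the same orientation as $J$: conjugation by the reflection $Q$ and the sign change each reverse orientation in complex dimension three.
\end{itemize}
Then $[I,\widehat J]=0$ together with $Q\mathcal E=\mathcal E$ gives the claim.

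\textbf{What to drop.} Everything before the dichotomy is unused and should be cut: the components of $JX$, the Cauchy--Schwarz idea, and the holomorphy of $JV_J$. Also, deducing $JIJV_I\in\mathcal E$ ``because $J$ acts on $\mathcal E$ linearly'' presupposes the invariance you are proving. The correct reason is that $\operatorname{span}\{V_I,JV_I,IV_I,JIV_I\}$ is $J$-invariant, lies in $\mathcal E$, and has dimension at least three, hence equals $\mathcal E$.
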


\begin{proof}
If $c=1$, then $[I,J]=0$ by Proposition \ref{prop:vanishin_poisson}. The $J$-invariance of $\mathcal E$ follows immediately from $IJ=JI$.
Suppose now that $c=-1$. We first show that $\mathcal O$ is dense in $M$. Otherwise, there would exist a nonempty open subset $\mathcal V\subset M\setminus\overline{\mathcal O}$ on which $\sigma$ vanishes identically, then since its $(2,0)$ component with respect to $I$, for instance, is holomorphic, $\sigma$ would vanish everywhere on $M$. Thus $I$ and $J$ commute, and $IJ$ defines a smooth involution of $TM$ such that
\[
TM= TM_+ \oplus TM_-
\]
where $TM_\pm= \ker(IJ \mp \mathrm{Id}) = \ker(I \pm J)=\mathrm{Im}(I \mp J)$ are smooth subbundles \cite[Theorem 4]{AG}. Therefore $\dim_{\mathbb R}\operatorname{Im}(\pi_\pm)$ is constant on $M$, and with the same argument as before, the generalized Darboux theorem applies. Therefore, locally, the generalized K\"ahler structure is defined by $d_H$-closed pure spinors. Hence \cite[Proposition 4.3]{AFSU} applies locally and yields
$
4V_I=\theta_I^\sharp-\theta_J^\sharp\in \mathrm{Im}\sigma=0,
$
contradicting $|V_I|=1$. Therefore, $\mathcal O$ is dense in $M$ and hence $\mathcal{O} \cap \mathcal{U}$ is dense in $\mathcal{U}$. Since $\mathcal{E}$ is $J$-invariant on the dense subset $\mathcal{O} \cap \mathcal{U}$, it is $J$-invariant on $\mathcal{U}$ by continuity.
\end{proof}
\subsection{The normal form of $J$}
In this subsection, we show that every point of $\mathcal U$ admits an open neighborhood contained in $\mathcal U$ and an adapted orthonormal frame in which $J$ takes a canonical form. We then extend this expression pointwise to $M\setminus\mathcal U$.

Recall that on $\mathcal{U}$ the vector field $X=JV_J-b IV_I=c JV_I -b IV_I$ is nonzero. Consequently, its norm satisfies $|X|=\sqrt{1-b^2}>0$ (cf. Equation \eqref{eqn:norm_X}). Set $s:=|X|$ and
\begin{equation} \label{eqn:basis}
e_0:=V_I, \ \ e_1:=IV_I, \ \ e_2:=\frac{cX}{s}, \ \ e_3:=Ie_2,
\end{equation}
where $c\in \{\pm1 \}$ is determined by $V_J=c V_I$ (Proposition \ref{prop:linear_dependence}). The vector fields $\{e_0,\dots,e_3\}$ are orthonormal, and
$$
V_I= e_0, \ \ IV_I= e_1, \, \, 
J V_I= Je_0=  c JV_J= c (b IV_I +X)=cb e_1 + s e_2, \ \, IJV_I=IJe_0=-cb\,e_0+s\,e_3.
$$
Thus, $\mathcal{E}=\mathrm{span}\{e_0,\dots,e_3\}$, which is both $I$- and $J$-invariant by Lemma \ref{lem:E_J_invariant}. We can locally choose a unit vector field $e_4\in\mathcal E^\perp$ and set $e_5:=Ie_4$. Thus, $\{e_0,\ldots,e_5\}$ is a local orthonormal frame on $\mathcal U$. Define $u:=-s\,e_1+cb\,e_2.$ Then $u$ is a unit vector field orthogonal to both $e_0$ and $Je_0$. Therefore, we may write
$\mathcal E=\operatorname{span}\{e_0,Je_0=cb\, e_1 + s \,e_2,u=-s\,e_1+cb\,e_2,e_3\}.
$
The $J$-invariance of $\mathcal E$ (Lemma \ref{lem:E_J_invariant}), together with the fact that $g$ is $J$-Hermitian, implies that
$Ju=\pm e_3.$
\begin{lem}
On $\mathcal{U}$, $Je_3=c u$.
\end{lem}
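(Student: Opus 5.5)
The plan is to fix the sign in $Ju=\pm e_3$ and then apply $J$ once more. Write $Ju=\varepsilon e_3$ with $\varepsilon\in\{\pm1\}$. Applying $J$ gives $-u=\varepsilon Je_3$, so $Je_3=-\varepsilon u$. The lemma is therefore equivalent to $\varepsilon=-c$. The function $\varepsilon=g(Ju,e_3)$ is continuous on the domain of the frame and takes values in $\{\pm1\}$, so it is locally constant. Hence it is enough to prove $\varepsilon=-c$ on a dense subset of $\mathcal U$. I treat the two values of $c$ separately.

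\emph{Case $c=1$.} By Proposition~\ref{prop:vanishin_poisson}, $IJ=JI$. First compute $Je_1$ directly:
\[
Je_1=JIe_0=IJe_0=I(b\,e_1+s\,e_2)=-b\,e_0+s\,e_3 .
\]
Next compute it using the basis $\{e_0,Je_0,u,e_3\}$. Inverting the rotation gives $e_1=cb\,Je_0-s\,u$, hence
\[
Je_1=-cb\,e_0-s\varepsilon\, e_3=-b\,e_0-s\varepsilon\, e_3 .
\]
Comparing the $e_3$-components and using $s>0$ on $\mathcal U$ yields $\varepsilon=-1=-c$.

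\emph{Case $c=-1$.} Here I use orientations. The basis $\{e_0,e_1,e_2,e_3\}$ of $\mathcal E$ is $I$-complex, since $e_1=Ie_0$ and $e_3=Ie_2$, so it is positively oriented for $I|_{\mathcal E}$. The basis $\{e_0,Je_0,u,Ju\}=\{e_0,\ cb\,e_1+s\,e_2,\ -s\,e_1+cb\,e_2,\ \varepsilon e_3\}$ is $J$-complex. Its change-of-basis determinant relative to $\{e_i\}$ is $\varepsilon(b^2+s^2)=\varepsilon$. Thus $I|_{\mathcal E}$ and $J|_{\mathcal E}$ induce the same orientation exactly when $\varepsilon=1$.

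On $\mathcal O\cap\mathcal U$ we have $\mathcal E=\operatorname{Im}\sigma=\mathcal D$ by \eqref{eqn:im_sigma2}, and it was shown before Claim~\ref{claim:pi_pm} that $I$ and $J$ induce the same orientation on $\mathcal D_p$. Hence $\varepsilon=1=-c$ on $\mathcal O\cap\mathcal U$. In the proof of Lemma~\ref{lem:E_J_invariant} it was shown that $\mathcal O$ is dense when $c=-1$, so $\mathcal O\cap\mathcal U$ is dense in $\mathcal U$. Local constancy of $\varepsilon$ then gives $\varepsilon=-c$ on all of $\mathcal U$.

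The only delicate point is the case $c=-1$. There one must relate the sign to the orientation on $\operatorname{Im}\sigma$, which needs the identification \eqref{eqn:im_sigma2} and is valid only on the dense set $\mathcal O\cap\mathcal U$. Continuity of $\varepsilon$ then closes the argument.
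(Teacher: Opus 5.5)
Your proof is correct. For $c=1$ it is essentially the paper's argument: the paper solves $Je_1=-b\,e_0+s\,e_3$ for $e_3$ and applies $J$, while you compare the $e_3$-coefficients of two expressions for $Je_1$.

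For $c=-1$ you take a different route. You read $\varepsilon=g(Ju,e_3)$ as the relative orientation of $I|_{\mathcal E}$ and $J|_{\mathcal E}$. You then use the fact, proved before Claim~\ref{claim:pi_pm}, that $I$ and $J$ induce the same orientation on $\mathcal D_p=\operatorname{Im}[I,J]_p$, together with the identification $\mathcal E=\mathcal D$ on $\mathcal O\cap\mathcal U$ from \eqref{eqn:im_sigma2}.

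The paper instead argues by contradiction with a one-line computation. If $Ju=-e_3$, then $e_1=-b\,Je_0-s\,u$ gives $JIe_0=b\,e_0+s\,e_3=IJe_0$. Hence $V_I=e_0\in\ker[I,J]$, which contradicts $V_I\in\operatorname{Im}[I,J]$ from \eqref{eqn:im_sigma}.

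The paper's route is more economical. It needs only the membership $V_I\in\operatorname{Im}\sigma$, not the full identification of $\mathcal E$ with $\operatorname{Im}\sigma$, and not the four-dimensional fact that oppositely oriented orthogonal complex structures commute.

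Your route is more conceptual. It is exactly the determinant computation the paper performs later in the proof of Theorem~\ref{thm:normal_form_of_J}, run in the opposite direction. It also shows that the sign is nothing but the relative orientation of $I$ and $J$ on $\mathcal E$. The same viewpoint would even handle $c=1$ uniformly: there $I$ and $J$ commute on $\mathcal E$ while $J\neq\pm I$ on $\mathcal E$ (because $s>0$), which forces opposite orientations, i.e.\ $\varepsilon=-1$.
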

\begin{proof}
If $c=1$, Proposition \ref{prop:vanishin_poisson} gives $[I,J]=0$. Since $[I,J]e_0=0$, we have $e_3=\tfrac{1}{s} Je_1+\tfrac{b}{s}\, e_0$. Applying $J$ once more gives $Je_3=- s\, e_1+b\, e_2=u$, and hence $Je_3=u$.

Suppose instead that $c=-1$. At every point of the dense subset $\mathcal O\cap\mathcal U$, we must have $Ju=e_3$. Indeed, assume by contradiction that $Ju=-e_3$. Since $c=-1$, $Je_0=-be_1+se_2$ and $u=-s e_1-b e_2$, which imply that $e_1=-b Je_0-su$. Therefore,
$$
JIe_0=Je_1=b e_0+s e_3=IJe_0,
$$
and hence $[I,J]e_0=0$. This is impossible because, on $\mathcal{O}\cap \mathcal{U}$, $e_0=V_I \in \Im([I,J])$ (Equation \eqref{eqn:im_sigma}) and $\Im([I,J])\cap \ker ([I,J])=0$ (Equation \ref{eqn:splitting2}). Since $\mathcal{U}\cap\mathcal{O}$ is dense in $\mathcal U$, continuity yields $Ju=e_3$ throughout $\mathcal U$.
\end{proof}
Since $e_1=IV_I=cb\,Je_0-s\,u,$ and $e_2=s\,Je_0+cb\,u,$
we obtain
\begin{equation} \label{eqn:J_normal_form}
\begin{split}
Je_0&=cb\,e_1+s\,e_2, \\
Je_1&=-cb\,e_0+cs\,e_3,\\
Je_2&=-s\,e_0-b\,e_3, \\
Je_3&=-cs\,e_1+b\,e_2.
\end{split} 
\end{equation}
\begin{thm} \label{thm:normal_form_of_J}
Let $(M^6,I,J,g)$ be a connected compact GKBHE manifold with $r=1$ and $V_I=c V_J$. Every point $p\in\mathcal U$ admits an open neighborhood contained in $\mathcal U$ carrying an $I$-unitary orthonormal frame $\{e_0,\dots,e_5\}$, where $\{e_0,\dots,e_3\}$ is the $I$-unitary orthonormal frame of $\mathcal E$ defined in \eqref{eqn:basis}, while $\{e_4,e_5=Ie_4\}$ is an orthonormal frame of $\mathcal E^\perp$, such that
\begin{equation} \label{eqn:matrix_J}
J=\begin{pmatrix}
0 & -cb & -s & 0 & 0 & 0\\
cb & 0 & 0 & -cs & 0 & 0\\
s & 0 & 0 & b & 0 & 0\\
0 & cs & -b & 0 & 0 & 0\\
0 & 0 & 0 & 0 & 0 & c\\
0 & 0 & 0 & 0 & -c & 0
\end{pmatrix},
\end{equation}
where $b= g(JV_J,IV_I)$ and $s=\sqrt{1-b^2}$. Furthermore, for every $p\in M\setminus\mathcal U$, there exists an $I$-unitary orthonormal basis of $T_pM$, with $e_0=V_I$, in which $J_p$ is represented by the same matrix \eqref{eqn:matrix_J}, with $s(p)=0$ and $b(p)=g_p(JV_J(p),IV_I(p))\in \{\pm1\}$.
\end{thm}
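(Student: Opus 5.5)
On $\mathcal U$ the argument is essentially assembled from the computations preceding the statement. I would first fix $p\in\mathcal U$ and a neighborhood inside $\mathcal U$ on which $e_0,\dots,e_3$ from \eqref{eqn:basis} are defined and orthonormal. By construction $e_1=Ie_0$ and $e_3=Ie_2$. Since $\mathcal E$ is $I$-invariant, $\mathcal E^\perp$ is a rank-two $I$-invariant subbundle, so after shrinking the neighborhood I can pick a unit section $e_4$ of $\mathcal E^\perp$ and set $e_5=Ie_4$; the resulting frame is $I$-unitary.

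The first four columns of \eqref{eqn:matrix_J} are just \eqref{eqn:J_normal_form}, which was derived from $Je_0=cb\,e_1+s\,e_2$, the preceding lemma $Je_3=cu$, and the relations $e_1=cb\,Je_0-s\,u$, $e_2=s\,Je_0+cb\,u$. I would check this by a short linear-algebra step: apply $J$ to these two relations and use $Ju=-c^{-1}\cdot(\dots)$, i.e. $Ju=ce_3$ from $J^2=-1$.

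For the $\mathcal E^\perp$ block, Lemma \ref{lem:E_J_invariant} together with $J$ being $g$-orthogonal shows that $\mathcal E^\perp$ is $J$-invariant. On an oriented real $2$-plane there are only two orthogonal complex structures, so $J|_{\mathcal E^\perp}=\pm I|_{\mathcal E^\perp}$. To decide the sign, I would compare orientations. $I$ and $J$ induce the same orientation on $TM$, so it suffices to compute the orientation $J$ induces on $\mathcal E$. Using the $J$-complex basis $\{e_0,Je_0,u,Ju=ce_3\}$ and the expression $Je_0\wedge u=(cb\,e_1+s e_2)\wedge(-s e_1+cb e_2)=c\,e_1\wedge e_2$ (since $b^2+s^2=1$), one gets $e_0\wedge Je_0\wedge u\wedge Ju=c^2 e_0\wedge e_1\wedge e_2\wedge e_3$ up to a fixed ordering sign, to be computed carefully. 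This should show that $J$ and $I$ agree in orientation on $\mathcal E$ exactly when $c=1$. Hence $J=cI$ on $\mathcal E^\perp$, so $Je_4=-c e_5$ and $Je_5=c e_4$, matching the last two columns. This orientation bookkeeping is the step most likely to go wrong; as a sanity check, for $c=1$ the relation $[I,J]=0$ together with $\mathcal E$ being spanned by $I$-/$J$-eigen-type combinations should force $J=I$ on $\mathcal E^\perp$.

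For $p\in M\setminus\mathcal U$ we have $s(p)=0$, so $X(p)=0$ and $JV_J(p)=b(p)IV_I(p)$ with $b(p)=\pm1$. Hence $J_pV_I=cb\,IV_I$, so $\operatorname{span}\{e_0,e_1\}$ is $J_p$-invariant with $J_p=cb\,I_p$ there. Its orthogonal complement $W$ is then $I_p$- and $J_p$-invariant and $4$-dimensional. On $W$, I would choose an $I$-unitary orthonormal basis $e_2,e_3=Ie_2,e_4,e_5=Ie_4$ realizing the required block: $Je_2=-b\,e_3$, $Je_3=b\,e_2$, and $J=cI^{-1}$-type on $\{e_4,e_5\}$. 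This requires $J_p|_W$ to have a common $I$-/$J$-invariant $2$-plane decomposition on which $J=-bI$ and $J=-cI$ respectively. The cleanest route is continuity: $M\setminus\mathcal U$ lies in the closure of $\mathcal U$ by Lemma \ref{lem:density_U}. The frames on $\mathcal U$ live in the compact group $O(6)$, so along a sequence $p_k\to p$ a subsequence of frames converges to an $I_p$-unitary orthonormal basis with $e_0=V_I(p)$. The matrix identity \eqref{eqn:matrix_J} passes to the limit, since $b$ and $s$ are continuous. The main obstacle in this part is making the limiting argument rigorous, which is resolved by the compactness of the frame bundle.
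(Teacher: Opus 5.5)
Your overall route is the paper's: \eqref{eqn:J_normal_form} for the $\mathcal E$-block, an orientation comparison to fix $J$ on $\mathcal E^\perp$, and a compactness-of-frames limit along $p_k\to p$ for points of $M\setminus\mathcal U$. The last part is fine and matches the paper, since $I$-unitarity and $e_0=V_I(p)$ pass to the limit. The gap is in the sign bookkeeping for the $\mathcal E^\perp$ block, which is the one genuinely new step on $\mathcal U$. As written it is wrong, and your final formula does not follow from it.

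\begin{itemize}
\item From $Je_3=cu$ and $J^2=-\mathrm{Id}$ you get $cJu=-e_3$, i.e.\ $Ju=-c\,e_3$, not $c\,e_3$. With your sign, applying $J$ to $e_1=cb\,Je_0-s\,u$ and $e_2=s\,Je_0+cb\,u$ would give $Je_1=-cb\,e_0-cs\,e_3$ and $Je_2=-s\,e_0+b\,e_3$. Both contradict the matrix.
\item There is no factor $c$ in $(cb\,e_1+s\,e_2)\wedge(-s\,e_1+cb\,e_2)=(b^2+s^2)\,e_1\wedge e_2=e_1\wedge e_2$.
\end{itemize}

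Hence
\[
e_0\wedge Je_0\wedge u\wedge Ju=-c\,e_0\wedge e_1\wedge e_2\wedge e_3 ,
\]
so $I$ and $J$ induce the same orientation on $\mathcal E$ exactly when $c=-1$. Your version produces $c^2=1$, which cannot detect $c$ at all, and you then assert the opposite conclusion. Since $I$ and $J$ induce the same orientation on $TM$, the relative sign on $\mathcal E^\perp$ must also be $-c$. That means $J=-cI$ on $\mathcal E^\perp$, which is what gives $Je_4=-c\,e_5$ and $Je_5=c\,e_4$. Your intermediate claim $J=cI$ on $\mathcal E^\perp$ would instead give $Je_4=c\,e_5$, the wrong block. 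The final entries you write are correct, but they contradict the line just before them.

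Your proposed sanity check is also backwards. For $c=1$ one has $J=-I$ on $\mathcal E^\perp$, not $J=I$. In the Hopf $\times$ elliptic-curve example, $\mathcal E^\perp=TE$ and $J_1=-I_E$ there; consistently, $\ker(I+J_1)$ has rank four in Lemma~\ref{lem:splittingL}. Moreover, $[I,J]=0$ alone cannot decide between $J=\pm I$ on the $J$-invariant $2$-plane $\mathcal E^\perp$, since both signs commute with $I$; only the orientation argument does. Once you correct $Ju=-c\,e_3$ and the determinant to $-c$, your argument becomes exactly the paper's.
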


\begin{proof}
The expression of $J$ on $\mathcal E$ follows from \eqref{eqn:J_normal_form}. Since $\mathcal E$ is invariant under both $I$ and $J$, so is $\mathcal E^\perp$. Hence $Je_4=\pm e_5$. The orientations induced by $I$ and $J$ on $\mathcal E$ differ by the sign $-c$. Indeed, the $J$-oriented frame $\{e_0,Je_0,u,Ju\}$ is related to the $I$-oriented frame ${e_0,e_1,e_2,e_3}$ by
$$
Je_0=cb\,e_1+s\,e_2,\qquad u=-s\,e_1+cb\,e_2,\qquad Ju=-c\,e_3,
$$
and the corresponding change-of-basis determinant is $-c$. Since $I$ and $J$ induce the same orientation on $M$, 
$$
Je_4=-c\,e_5,\qquad Je_5=c\,e_4.
$$
Together with \eqref{eqn:J_normal_form}, this gives \eqref{eqn:matrix_J}.

We now prove the statement at a point $p\in M\setminus\mathcal U$. Since $\mathcal U$ is dense, there exists a sequence $(p_k)_k\subset\mathcal U$ converging to $p$. For each $k$, let $B_k$ be a $I$-unitary orthonormal basis of $T_{p_k}M$ in which $J_{p_k}$ has the form \eqref{eqn:matrix_J}. Since $p_k\to p$, all but finitely many $p_k$ lie in a trivializing neighborhood of the unitary frame bundle of $(TM,I,g)$. Since its fiber $U(3)$ is compact, after passing to a subsequence, the frames $(p_k,B_k)$ converge in the unitary frame bundle to a frame $(p,B)$, where $B=\{e_0,\ldots,e_5\}$ is an $I$-unitary orthonormal basis of $T_pM$. Since the first vector of $B_k$ is $V_I(p_k)$, the smoothness of $V_I$ implies that $e_0=V_I(p)$. Writing $B_k=\{e_0^k,\ldots,e_5^k\}$, the convergence of the frames and the smoothness of $(J,g)$ imply that $g_{p_k}(J_{p_k}e_b^k,e_a^k)\to g_p(J_pe_b,e_a)$ for every $a,b$. Thus, the matrices of $J_{p_k}$ with respect to $B_k$ converge entrywise to the matrix of $J_p$ with respect to $B$. Moreover, since $b$ and $X$ are smooth,
$$
b(p_k)\longrightarrow b(p),\qquad s(p_k)=|X(p_k)|\longrightarrow |X(p)|=0.
$$
By \eqref{eqn:norm_X}, $|X(p)|^2=1-b(p)^2$, and hence $b(p)=\pm 1$. Passing to the limit in \eqref{eqn:matrix_J}, we conclude that the matrix of $J_p$ with respect to $B$ is given by the same expression, with $s=0$ and $b=\pm1$.
\end{proof}
Proposition \ref{prop:vanishin_poisson} shows that $\sigma$ vanishes identically when $c=1$. Suppose now that $c=-1$. We have already shown that $V_I\in\operatorname{Im}(\sigma)$ on $\mathcal O$ (Equation \eqref{eqn:im_sigma}) and that $\operatorname{Im}(\sigma)=\mathcal E$ on $\mathcal O\cap\mathcal U$ (Equation \eqref{eqn:im_sigma2}). The following corollary identifies $\mathcal O$ with $\mathcal U$; in particular, the latter description of $\operatorname{Im}(\sigma)$ holds throughout $\mathcal U$.
\begin{cor}
Let $(M^6,I,J,g)$ be a connected compact GKBHE manifold with $r=1$. If $c=-1$, then $\mathcal O=\mathcal U$ and $\operatorname{Im}(\sigma)=\mathcal E$ on $\mathcal U$.
\end{cor}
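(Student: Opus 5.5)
The plan is to compute $[I,J]$ explicitly from the normal form in Theorem~\ref{thm:normal_form_of_J}, with $c=-1$, and then read off both inclusions $\mathcal U\subseteq\mathcal O$ and $\mathcal O\subseteq\mathcal U$.

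\textbf{Computing the commutator.} In the $I$-unitary frame $\{e_0,\dots,e_5\}$, $I$ is block diagonal with blocks $\begin{pmatrix}0&-1\\1&0\end{pmatrix}$. The matrix \eqref{eqn:matrix_J} splits into a $4\times4$ block on $\mathcal E$ and a $2\times 2$ block on $\mathcal E^\perp$. On $\mathcal E^\perp$, $J=\pm I$ restricted there, so $[I,J]$ vanishes on $\mathcal E^\perp$. On $\mathcal E$, one uses \eqref{eqn:J_normal_form} with $c=-1$. For instance, $IJe_0=I(-be_1+se_2)=be_0+se_3$ and $JIe_0=Je_1=be_0-se_3$, so
\[
[I,J]e_0=2s\,e_3 .
\]
Similar computations for $e_1,e_2,e_3$ should show that $[I,J]|_{\mathcal E}$ equals $2s$ times an invertible (indeed orthogonal) endomorphism of $\mathcal E$. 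In particular, the entries proportional to $b$ should cancel, since the $b$-part of $J$ on $\mathrm{span}\{e_0,e_1\}$ commutes with $I$. Checking this cancellation is the only real computation, and it is routine.

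\textbf{Conclusion on $\mathcal U$ and on its complement.} On $\mathcal U$ we have $s>0$, so $[I,J]_p\neq0$ and $\operatorname{Im}[I,J]_p=\mathcal E_p$. This gives $\mathcal U\subseteq\mathcal O$, and hence \eqref{eqn:im_sigma2} holds on all of $\mathcal U$. At a point $p\in M\setminus\mathcal U$, the second part of Theorem~\ref{thm:normal_form_of_J} provides an $I$-unitary basis in which $J_p$ has the form \eqref{eqn:matrix_J} with $s=0$ and $b=\pm1$. The same computation with $s=0$ then gives $[I,J]_p=0$, so $p\notin\mathcal O$. This proves $\mathcal O\subseteq\mathcal U$.

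\textbf{Alternative for the reverse inclusion.} One could instead argue via \eqref{eqn:im_sigma}. At $p\in\mathcal O\setminus\mathcal U$ we would have $JV_I=\pm IV_I$, hence $IJV_I=\mp V_I=JIV_I$, so $V_I\in\ker[I,J]_p$. But \eqref{eqn:im_sigma} gives $V_I\in\operatorname{Im}[I,J]_p$, and since kernel and image are orthogonal by \eqref{eqn:splitting2}, $V_I(p)=0$, a contradiction. Either route works; the matrix computation is the most direct.
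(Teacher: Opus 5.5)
Your proposal is correct and follows essentially the paper's own argument. The paper likewise reads off from the normal form of Theorem~\ref{thm:normal_form_of_J}, including its limiting version on $M\setminus\mathcal U$ with $s=0$, that $|[I,J]|^2=16s^2=16|X|^2$ when $c=-1$; this gives $\mathcal O=\mathcal U$, and the paper then invokes \eqref{eqn:im_sigma2}. Your explicit computation ($[I,J]e_0=2s\,e_3$, $[I,J]e_1=2s\,e_2$, $[I,J]e_2=-2s\,e_1$, $[I,J]e_3=-2s\,e_0$, and $[I,J]=0$ on $\mathcal E^\perp$) checks out, yields $\operatorname{Im}\sigma=\mathcal E$ on $\mathcal U$ directly, and your alternative argument for $\mathcal O\subseteq\mathcal U$ via \eqref{eqn:im_sigma} is also valid.
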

\begin{proof}
By Theorem \ref{thm:normal_form_of_J}, when $c=-1$, $|[I,J]|^2=16s^2=16(1-b^2)=16|X|^2.$ Thus $[I,J]_p\neq0$ if and only if $X(p)\neq0$. By the definitions of $\mathcal O$ and $\mathcal U$, this proves that $\mathcal O=\mathcal U$. The description of $\operatorname{Im}(\sigma)$ then follows from Equation \eqref{eqn:im_sigma2}.
\end{proof}

\subsection{Transverse geometry}
In this subsection, we introduce a smooth basic function $F$ and derive several identities that it satisfies. In the next section, we shall show that these identities are sufficient to construct a GKBHE structure in the converse direction.

Let $(M^6,I,J,g)$ be a connected compact GKBHE manifold with $r=1$. Define
\begin{equation}\label{eqn:F}
F:=e^{-f}(1+cb),
\end{equation}
where $b=g(IV_I,JV_J)$, $f$ is the normalized soliton potential and $c=\pm1$ is defined by the relation $V_J=c V_I$ (see Proposition \ref{prop:linear_dependence}). Since $e^{-f}>0$, $b(p)\in[-1,1]$, and $c=\pm 1$, the function $F$ is nonnegative and it is strictly positive on $\mathcal U$. Since $f$ is basic by item \ref{item:basic} in Lemma \ref{lem:a_priori_symmetries} and $b$ is basic by item \ref{item:GKBHE-coefficients-basic} in Proposition \ref{prop:GKBHE-horizontal-symmetries}, the function $F$ is basic with respect to the foliation $\mathcal F_I$.
\begin{lem} \label{claim:db}
The function $b$ satisfies $db=(b+c)df+2(IX)^\flat$.
\end{lem}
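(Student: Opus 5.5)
The plan is to differentiate $b=g(JV_J,IV_I)$ with the Levi-Civita connection and to turn the resulting torsion term into Lee forms, which are known: $\theta_I=df+2\eta$ and $\theta_J=df+2c\eta$.

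\emph{Step 1: a torsion formula for $db$.} By item \ref{item:JV_J} and item \ref{item:IV_I} of Lemma~\ref{lem:a_priori_symmetries}, and by \eqref{eq:Bismut_connections}, we have
\[
g(\nabla_Y JV_J,W)=\tfrac12H(Y,JV_J,W),\qquad g(\nabla_Y IV_I,W)=-\tfrac12H(Y,IV_I,W).
\]
Hence
\[
db(Y)=\tfrac12H(Y,JV_J,IV_I)-\tfrac12H(Y,IV_I,JV_J)=H(Y,JV_J,IV_I)=H(Y,X,IV_I),
\]
using \eqref{eqn:V_J/JV_J} and skew-symmetry. By \eqref{eqn:torsionH} together with $d\eta=0$ (Corollary~\ref{cor:V_Iparallel}), this equals $-(\iota_Xd\eta_I)(Y)$. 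This only recovers item \ref{item:GKBHE-curvature-potentials} of Proposition~\ref{prop:GKBHE-horizontal-symmetries}, so the real content lies in evaluating $H(IV_I,X,\cdot)$ through the second complex structure.

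\emph{Step 2: use the two Lee form identities.} I will work on $\mathcal U$, in the frame of Theorem~\ref{thm:normal_form_of_J}. The identity extends to $M$ by density of $\mathcal U$ (Lemma~\ref{lem:density_U}) and smoothness of both sides. Applying \eqref{eqn:Lee_form} to $(I,g,H)$ and to $(J,g,-H)$ gives
\[
\theta_I(W)=-\tfrac12\sum_iH(IW,e_i,Ie_i),\qquad \theta_J(W)=\tfrac12\sum_iH(JW,e_i,Je_i).
\]
I will take $W$ horizontal and expand both traces using \eqref{eqn:matrix_J}, together with $\iota_{e_0}H=0$ and the decomposition $H=\eta_I\wedge d\eta_I+H^T$. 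In the $J$-trace, the pairs $(e_1,Je_1)$, $(e_2,Je_2)$, $(e_3,Je_3)$ mix $e_1=IV_I$ with $e_2,e_3\propto X, IX$. As a result, $\theta_J(W)$ should equal a combination of the transverse trace $\tfrac12\sum H^T(IW,\cdot,I\cdot)$, which is $\theta_I(W)$ restricted horizontally and hence $df(W)$ by \eqref{eq:BHE-transverse-Lee}, plus terms of the form $s\,d\eta_I(\cdot,W)$ evaluated on $e_2,e_3$ and terms $b\,H^T(\ldots)$.

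\emph{Step 3: solve for the unknown.} Subtracting $\theta_J(W)=df(W)$ from this expansion gives one linear equation. Its unknowns are $d\eta_I(X,W)$, $d\eta_I(IX,IW)$ and the transverse term $H^T(IX,W,\cdot)$. Two further inputs should isolate $d\eta_I(X,\cdot)$:
\begin{itemize}
\item the $I$-invariance of $d\eta_I$;
\item the conformally Kähler form of $H^T$, namely $H^T=-d^c_{I^T}(e^f\omega_K)=-e^f\,d^cf\wedge\omega_K$ on the four-dimensional transverse space. This makes $H^T(IX,W,\cdot)$ expressible through $df$ and $g(X,\cdot)$.
\end{itemize}
The expected outcome is $-\iota_Xd\eta_I=(b+c)\,df+2(IX)^\flat$. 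As sanity checks, on the horizontal part the coefficient $(b+c)$ should vanish exactly where $F=0$, consistent with $X=0$ at $b=-c$; and on vertical $Y$ both sides vanish, since $f$ is basic and $IX$ is horizontal.

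The main obstacle is the sign and index bookkeeping in Step 2: correctly accounting for the $c$-dependent entries of \eqref{eqn:matrix_J} and the orientation conventions. If the Lee-form trace does not close up cleanly, I will instead differentiate $b=-c\,g(V_I,JIV_I)$ directly, using $\nabla_YJ=-\tfrac12[H_Y,J]$ from $\nabla^{B,J}J=0$, and then use the same torsion decomposition.
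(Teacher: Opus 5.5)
Your route is the paper's in outline: expand $\theta_J$ via \eqref{eqn:Lee_form} with torsion $-H$ in the frame of Theorem~\ref{thm:normal_form_of_J}, use $H=\eta_I\wedge d\eta_I+H^T$ with $H^T=Idf\wedge\omega_I^T$, compare with $\theta_J|_{\mathcal H}=df$, convert via item \ref{item:GKBHE-curvature-potentials}, and extend by density. However, the inputs you list for Step 3 do not suffice, and the failure sits exactly in the $IX$-direction.

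Carrying out the expansion gives $\theta_J(Y)=cs\,H(e_1,e_3,JY)-b\,H(e_2,e_3,JY)-c\,H(e_4,e_5,JY)$. At $Y=e_3$, the vector $Je_3=-cs\,e_1+b\,e_2$ has a vertical part, and one finds $\theta_J(e_3)=s\,d\eta_I(e_4,e_5)-cb\,df(e_3)$. On the other hand, $c\,db(e_3)=-s\,d\eta_I(e_2,e_3)$. So the unknown that actually appears is $d\eta_I(e_4,e_5)$, the component of $d\eta_I$ on $\mathcal E^\perp$. It involves neither $X$ nor $IX$, so it is not among the quantities you listed. $I$-invariance only ties off-diagonal entries ($d\eta_I(e_3,e_5)=d\eta_I(e_2,e_4)$, $d\eta_I(e_3,e_4)=-d\eta_I(e_2,e_5)$), and the conformally K\"ahler form of $H^T$ only produces the $-cb\,df$ terms. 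Neither links $d\eta_I(e_4,e_5)$ to $d\eta_I(e_2,e_3)$. Structurally, every relation you invoke is homogeneous linear in $(d\eta_I,df)$: the horizontal parts of both Lee form identities, $I$-invariance, the form of $H^T$, and item \ref{item:GKBHE-curvature-potentials}. The target, however, contains $2(IX)^\flat=2cs\,e^3$, which carries no such factor, so no combination of these relations can produce it.

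The missing ingredient is the BHE trace identity \eqref{eq:BHE-curvature-traces}, namely $d\eta_I(e_2,e_3)+d\eta_I(e_4,e_5)=-2$. It is in fact contained in the identity $\theta_I=df+2\eta$ that you quote, but only in its vertical part. Applying \eqref{eqn:Lee_form} at $V_I$ gives $\theta_I(V_I)=-\bigl(d\eta_I(e_2,e_3)+d\eta_I(e_4,e_5)\bigr)=2$. Your Step 2 evaluates $\theta_I$ only on horizontal $W$, where it carries no information about $d\eta_I$. With the trace identity, $\theta_J(e_3)=df(e_3)$ becomes $df(e_3)=c\,db(e_3)-2s-cb\,df(e_3)$, that is, $db(e_3)=(b+c)df(e_3)+2cs$. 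Since $(IX)^\flat=cs\,e^3$, this is exactly the $e_3$-component of the claim. The components along $e_2,e_4,e_5$ close as you anticipated. This is precisely how the paper's proof proceeds.
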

\begin{proof}
We first prove the identity on $\mathcal U$. Fix $p\in\mathcal U$ and choose an open neighborhood $\mathcal W\subset\mathcal U$ of $p$ on which the adapted frame $\{e_0,\dots,e_5\}$ of Theorem \ref{thm:normal_form_of_J} is defined. We denote its dual coframe by $\{e^0,\dots,e^5\}$. Let $H_J$ denote the Bismut torsion of $(J,g)$. Since $H_J=-H$, the trace identity for the Lee form (Equation \eqref{eqn:Lee_form}) gives, for every vector field $Y$,
\begin{equation}\label{eqn:lee_form1}
\theta_J(Y)
=-\frac12\sum_{i=0}^5H_J(JY,e_i,Je_i)
=\frac12\sum_{i=0}^5H(e_i,Je_i,JY).
\end{equation}
Furthermore, by Equation \eqref{eqn:torsionH}, $H$ is given by 
$$
H=H^T+\eta_I\wedge d\eta_I
  =I df\wedge\omega_I^T+\eta_I\wedge d\eta_I.
$$
In the adapted frame, $\eta_I=e^1$ and $\omega_I^T=e^{23}+e^{45}.$ Since $f$ is basic and $X(f)=0$ by item \ref{item:GKBHE-horizontal-f-invariant} in Proposition \ref{prop:GKBHE-horizontal-symmetries}, we have $df(e_0)=df(e_1)=df(e_2)=0.$ Consequently, $Idf=df(e_3)e^2+df(e_5)e^4-df(e_4)e^5,$ and hence
$$
H=e^1\wedge d\eta_I
  +df(e_3)e^{245}
  -df(e_4)e^{235}
  +df(e_5)e^{234}.
$$
Substituting the normal form of $J$ into \eqref{eqn:lee_form1}, and using that $H$ has no component involving $e_0$, we obtain
$$
\theta_J(Y)
=cs\,H(e_1,e_3,JY)
-b\,H(e_2,e_3,JY)
-c\,H(e_4,e_5,JY).
$$
Evaluating the preceding expression for $\theta_J$ on the horizontal frame vectors therefore gives
\begin{equation}\label{eqn:Theta_J}
\begin{split}
\theta_J(e_2)&=0,\\
\theta_J(e_3)&=s\,d\eta_I(e_4,e_5)-cb\,df(e_3),\\
\theta_J(e_4)&=-s\,d\eta_I(e_3,e_5)-cb\,df(e_4),\\
\theta_J(e_5)&=s\,d\eta_I(e_3,e_4)-cb\,df(e_5).\\
\end{split}
\end{equation}
Since $d\eta_I$ is $I$-invariant, $d\eta_I(e_3,e_5)=d\eta_I(e_2,e_4),
$ and $d\eta_I(e_3,e_4)=-d\eta_I(e_2,e_5).$ Furthermore, Equation \eqref{eq:BHE-curvature-traces} gives $d\eta_I(e_2,e_3)+d\eta_I(e_4,e_5)=-2.$
Combining this identity with \eqref{eqn:Theta_J}, we obtain
\begin{equation}\label{eqn:theta_J2}
\begin{split}
\left.\theta_J\right|_{\mathcal H}
=&\bigl(-2s-s\,d\eta_I(e_2,e_3)-cb\,df(e_3)\bigr)e^3+\bigl(-s\,d\eta_I(e_2,e_4)-cb\,df(e_4)\bigr)e^4+\bigl(-s\,d\eta_I(e_2,e_5)-cb\,df(e_5)\bigr)e^5.
\end{split}
\end{equation}
On the other hand, item \ref{item:GKBHE-curvature-potentials} in Proposition \ref{prop:GKBHE-horizontal-symmetries} gives $db=-\iota_Xd\eta_I.$ Since $X=cs \, e_2$ and $IX=cs\,e_3$, it follows that $
c\,db=-s\,\iota_{e_2}d\eta_I,$ and $-2c(IX)^\flat=-2s\,e^3.$
Thus \eqref{eqn:theta_J2} can be written as
\begin{equation}\label{eqn:horizontal_lee_J}
\left.\theta_J\right|_{\mathcal H}
=c\,db-2c(IX)^\flat-cb\,df.
\end{equation}
Finally, by the definition of the canonical vector field and Proposition \ref{prop:linear_dependence}, $\theta_J=df+2V_J^\flat=df+2c \eta.$ Since $\mathcal H$ is orthogonal to $V_I$ and $f$ is basic, $
\left.\theta_J\right|_{\mathcal H}=df.$ Comparing this identity with \eqref{eqn:horizontal_lee_J}, we obtain
$
df=c\,db-2c(IX)^\flat-cb\,df.
$
Since $c^2=1$, this is equivalent to the claimed identity.
The $1$-form $db-(b+c)df-2(IX)^\flat$ is smooth on $M$ and vanishes on the dense subset $\mathcal U$. It therefore vanishes identically on $M$.
\end{proof}
\begin{lem} \label{claim:gradient}
It holds that
$
X=-\frac{c}{2}I^T\grad_{g_K^T}F.
$
\end{lem}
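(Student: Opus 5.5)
The plan is a direct computation: differentiate the basic function $F=e^{-f}(1+cb)$ of \eqref{eqn:F}, feed in the expression for $db$ from Lemma~\ref{claim:db}, and then convert the resulting $1$-form into a gradient for the transverse K\"ahler metric $g_K^T=e^{-f}g^T$ of Theorem~\ref{thm:BHE-six-to-four}.

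\textbf{Differentiating $F$.} First,
\[
dF=e^{-f}\bigl(c\,db-(1+cb)\,df\bigr).
\]
Lemma~\ref{claim:db} gives $db=(b+c)df+2(IX)^\flat$. Using $c^2=1$,
\[
c\,db=(cb+1)\,df+2c\,(IX)^\flat .
\]
The $df$ terms therefore cancel exactly, leaving
\[
dF=2c\,e^{-f}(IX)^\flat ,
\]
where $\flat$ is taken with respect to $g$. This identity holds on all of $M$, since Lemma~\ref{claim:db} does.

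\textbf{Identifying the gradient.} Next I check that $dF$ is a horizontal basic form, so that it can be read transversally. The vector field $X$ lies in $\mathcal H$, and $\mathcal H=\mathcal F_I^\perp$ is $I$-invariant, so $IX\in\mathcal H$. Hence $(IX)^\flat$ vanishes on $\mathcal F_I$, and on $\mathcal H$ it coincides with $g^T(IX,\cdot)$. Moreover $F$ is basic, as noted after \eqref{eqn:F}, so $dF$ descends to any local leaf space and
\[
\grad_{g^T}F=2c\,e^{-f}\,IX \quad\text{(viewed transversally).}
\]
Since $g_K^T=e^{-f}g^T$, rescaling gives $\grad_{g_K^T}F=e^{f}\grad_{g^T}F=2c\,IX$. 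Applying $I^T$ and using $I^2=-\mathrm{Id}$ yields
\[
I^T\grad_{g_K^T}F=-2c\,X,
\]
which is the claimed identity $X=-\tfrac{c}{2}I^T\grad_{g_K^T}F$.

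\textbf{Expected obstacle.} There is no real obstacle beyond the cancellation of the $df$ terms, which Lemma~\ref{claim:db} has already arranged. The only point needing care is bookkeeping: $X$ and $IX$ must be interpreted as their projections to the local leaf space. This is legitimate because $[X,V_I]=[X,IV_I]=0$ by item~\ref{item:GKBHE-horizontal-projectable} of Proposition~\ref{prop:GKBHE-horizontal-symmetries}, and because $I$ preserves both the foliation $\mathcal F_I$ and $\mathcal H$.
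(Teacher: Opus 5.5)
Your proposal is correct and follows the paper's proof essentially verbatim. You differentiate $F$ and substitute Lemma~\ref{claim:db}, so that the $df$ terms cancel and $dF=2c\,e^{-f}(IX)^\flat$; you then read this off as $\grad_{g_K}F=2c\,IX$ and apply $I^T$. The only cosmetic difference is that you pass through $\grad_{g^T}F$ and then rescale, whereas the paper writes $dF=2c\,\iota_X\omega_K$ directly.
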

\begin{proof}
Since $F$ is basic, its transverse gradient is well defined. Differentiating \eqref{eqn:F} and using Lemma \ref{claim:db}, we obtain
\begin{equation} \label{eqn:dF}
\begin{split}
dF
&=d\bigl(e^{-f}(1+cb)\bigr)\\
&=-e^{-f}(1+cb)df+ce^{-f}db\\
&=-e^{-f}(1+cb)df
+ce^{-f}\bigl((b+c)df+2(IX)^\flat\bigr)\\
&=2ce^{-f}(IX)^\flat.
\end{split}
\end{equation}
Since $\iota_X\omega_I^T=(IX)^\flat$ and $\omega_K=e^{-f}\omega_I^T$, this identity can be written as
$dF=2c\,\iota_X\omega_K^T
   =2c\,g_K(I^TX,\cdot).$
Therefore
$
\grad_{g_K}F=2cI^TX.
$
Applying $I^T$ and using $(I^T)^2=-\operatorname{Id}$ and $c^2=1$, we conclude that $X=-\frac{c}{2}I^T\grad_{g_K}F.$
\end{proof}
\begin{lem} \label{claim:laplacian}
It holds that $\Delta_{g_K}F=-8cb.$ Moreover, on the open set $\{F>0\}$, $\Delta_{g_K}\log F=-\frac{R_K}{2}.$
\end{lem}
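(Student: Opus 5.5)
The second identity will follow by pure algebra from the first one, the previous lemmas and Theorem~\ref{thm:BHE-six-to-four}. So the real work is computing $\Delta_{g_K}F$.

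\emph{Reduction to a trace.} By Lemma~\ref{claim:gradient}, $\grad_{g_K}F=2cI^TX$, so $\Delta_{g_K}F=2c\,\operatorname{div}_{g_K}(I^TX)$. On the transverse Kähler surface, the divergence of $I^TY$ is, up to a universal sign and factor, $\operatorname{tr}_{\omega_K}d(Y^{\flat_K})$. Here $Y^{\flat_K}=g_K(Y,\cdot)$, and equivalently $d^c F$ is a multiple of $X^{\flat_K}$, so $dd^cF$ is a multiple of $d(X^{\flat_K})$. I will fix these constants once with the conventions of Section~2, for instance by testing on flat $\mathbb C^2$. Since $g_K=e^{-f}g^T$ on basic horizontal tensors, we have $X^{\flat_K}=e^{-f}X^{\flat}|_{\mathcal H}$. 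Hence
\[
d(X^{\flat_K})=e^{-f}\bigl(dX^\flat-df\wedge X^\flat\bigr)\big|_{\mathcal H},
\]
and it remains to compute $\operatorname{tr}_{\omega^T}$ of the horizontal part of $dX^\flat$ and of $df\wedge X^\flat$. The second term is easy: $X(f)=0$ and $IX\perp X$ give $\operatorname{tr}_{\omega^T}(df\wedge X^\flat)=\pm 2\,df(IX)$. Then Lemma~\ref{claim:db} lets us rewrite $df(IX)$ in terms of $db(IX)$, which in turn equals $\pm d\eta_I(X,IX)$ by \eqref{eqn:diff_curvatures}.

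\emph{Computing $dX^\flat$.} Write $X^\flat=Z_2^\flat-b\,\eta_I$ with $Z_2=JV_J$. Since $Z_2$ is Killing and $\nabla^{B,J}$-parallel, with $\nabla^{B,J}=\nabla-\tfrac12H$, we get $\nabla Z_2=\tfrac12H(\cdot,Z_2)$ and hence $dZ_2^\flat=\pm\iota_{Z_2}H$. Therefore
\[
dX^\flat=\pm\iota_{Z_2}H-db\wedge\eta_I-b\,d\eta_I .
\]
The term $db\wedge\eta_I$ has no horizontal part. By \eqref{eq:BHE-curvature-traces}, $\operatorname{tr}_{\omega^T}(b\,d\eta_I)=-4b$, and this is the source of the $-8cb$. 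For $\operatorname{tr}_{\omega^T}\iota_{Z_2}H$ I will use the explicit expression $H=e^1\wedge d\eta_I+I df\wedge\omega_I^T$ in the adapted frame of Theorem~\ref{thm:normal_form_of_J}, exactly as in the proof of Lemma~\ref{claim:db}, with $Z_2=b\,e_1+cs\,e_2$. The $e_1$-part contributes $b\operatorname{tr}_{\omega^T}d\eta_I=-4b$, or cancels against the previous term depending on the sign of $dZ_2^\flat=\pm\iota_{Z_2}H$. The $e_2$-part produces $s\,df(e_3)$-type terms. These should cancel exactly the $df\wedge X^\flat$ contribution once Lemma~\ref{claim:db} is used. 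I expect this sign and constant bookkeeping to be the main obstacle. My first check is that the $df$-terms cancel, since $\Delta F$ must be expressible through $b$ alone. I would do the computation on $\mathcal U$ and extend to $M$ by density (Lemma~\ref{lem:density_U}), because both sides are smooth.

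\emph{The logarithmic identity.} On $\{F>0\}$ we have
\[
\Delta_{g_K}\log F=\frac{\Delta_{g_K}F}{F}-\frac{|\grad_{g_K}F|_{g_K}^2}{F^2}.
\]
Moreover $|\grad_{g_K}F|^2_{g_K}=4|X|^2_{g_K}=4e^{-f}(1-b^2)$ by \eqref{eqn:norm_X}, and $1-b^2=(1-cb)(1+cb)$. Substituting $F=e^{-f}(1+cb)$ gives
\[
\Delta_{g_K}\log F=e^f\,\frac{-8cb-4(1-cb)}{1+cb}=-4e^f=-\frac{R_K}{2},
\]
by \eqref{eq:BHE-transverse-scalar-curvature}.
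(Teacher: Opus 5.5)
Your proposal is correct, but it organizes the computation differently from the paper.

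The paper stays on $M$. It combines $\nabla^{B,J}(JV_J)=0$ with $\nabla^{B,I}I=0$ to compute $\nabla(IJV_I)$, and takes the divergence so that the torsion trace collapses to the Lee form, $\operatorname{div}(IJV_I)=2\theta_I(IJV_I)$. From $\theta_I=df+2\eta$ it reads off $\operatorname{div}(IX)=2\,df(IX)-4b$, and it then passes to $g_K$ using the conformal transformation law of the four-dimensional Laplacian.

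You instead use the skew part of the same covariant derivative, $d(JV_J)^\flat=-\iota_{JV_J}H$, and push down to the leaf space. There you use the K\"ahler identity $\operatorname{div}(I^TY)=-\tfrac12\operatorname{tr}_{\omega}dY^\flat$ together with the rescaling $X^{\flat_K}=e^{-f}X^\flat$. In your route, the role of $\theta_I=df+2\eta$ is played by the explicit torsion $H=\eta_I\wedge d\eta_I+Idf\wedge\omega_I^T$ and the trace $\operatorname{tr}_{\omega^T}d\eta_I=-4$.

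What each route buys: yours avoids computing $\nabla I$ and the conformal law for the Laplacian. It also produces the whole two-form $d(X^{\flat_K})$, which is a constant multiple of $dd^cF$ since $d^cF=-2c\,X^{\flat_K}$, not just its trace. The paper's route is shorter, because the Lee-form trace identity does the torsion bookkeeping in one line.

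The signs you left open come out as follows:
\begin{itemize}
\item $dX^\flat=-\iota_{Z_2}H-db\wedge\eta_I-b\,d\eta_I$.
\item The horizontal $\omega^T$-trace of $\iota_{Z_2}H$ is $-4b+2\,df(IX)$, so $\operatorname{tr}_{\omega^T}dX^\flat=8b-2\,df(IX)$. The two $b$-contributions add rather than cancel.
\item $\operatorname{tr}_{\omega^T}(df\wedge X^\flat)=-2\,df(IX)$.
\item Hence $\Delta_{g_K}F=-c\operatorname{tr}_{\omega^T}(dX^\flat-df\wedge X^\flat)=-8cb$.
\end{itemize}

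The $df$-terms cancel directly, so the detour through Lemma~\ref{claim:db} is not needed. Since $\iota_XH$ can be computed without the adapted frame, no density argument is required either. Your derivation of the logarithmic identity is the same as the paper's.
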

\begin{proof}
Since the Bismut torsion of $(J,g)$ is $-H$ and $JV_I=cJV_J$ is $\nabla^{B,J}$-parallel by item \ref{item:JV_J} in Lemma \ref{lem:a_priori_symmetries}, it follows that $\nabla_Y(JV_I)=\tfrac12\, (\iota_YH(JV_I))^\sharp.$ On the other hand, since $\nabla^{B,I}=\nabla+\frac12H$ preserves $I$, we have $(\nabla_YI) Z=-\frac12\big(H(Y,IZ)^\sharp- I H(Y,Z)^\sharp\big).$
It follows that
\begin{align*}
\nabla_Y(IJV_I)=(\nabla_YI)(JV_I)+I\nabla_Y(JV_I)=I\left(\iota_YH(JV_I)\right)^\sharp-\tfrac12 \left(\iota_YH(IJV_I)\right)^\sharp.
\end{align*}
Taking the trace with respect to a local orthonormal frame ${e_i}$ gives
\begin{align*}
\operatorname{div}(IJV_I)
=\sum_i g\bigl(I\left(\iota_{e_{i}}H(JV_I)\right)^\sharp,e_i\bigr)
=\sum_i H(e_i,Ie_i,JV_I)
=2\theta_I(IJV_I),
\end{align*}
where the last equality follows from Equation \eqref{eqn:Lee_form}.
Since $JV_I=cJV_J=cb\,IV_I+cX$, we have $IJV_I=-cb\,V_I+cIX.$ Using $\theta_I=df+2 \eta$ and $V_I(f)=0$ (item \ref{item:basic} in Lemma \ref{lem:a_priori_symmetries}), we obtain
\begin{equation}\label{eqn:div_IJVI}
\operatorname{div}(IJV_I)
=2c\,df(IX)-4cb.
\end{equation}
On the other hand, since $V_I(b)=0$ (item \ref{item:GKBHE-coefficients-basic} in Proposition \ref{prop:GKBHE-horizontal-symmetries}) and $V_I$ is Killing (item \ref{item:V_I} in Lemma \ref{lem:a_priori_symmetries}),
$$
\operatorname{div}(IJV_I)
=\operatorname{div}(-cbV_I+cIX)
=c\,\operatorname{div}(IX).
$$
Thus \eqref{eqn:div_IJVI} yields
\begin{equation}\label{eqn:div_IX}
\operatorname{div}(IX)=2df(IX)-4b.
\end{equation}
By Equation \eqref{eqn:dF} in the proof of Lemma \ref{claim:gradient}, $\grad_gF=2ce^{-f}IX.$ Hence, using \eqref{eqn:div_IX}, we compute
\begin{align*}
\Delta F=\operatorname{div}\bigl(2ce^{-f}IX\bigr)=2ce^{-f}\bigl(\operatorname{div}(IX)-df(IX)\bigr)=2ce^{-f}df(IX)-8ce^{-f}b.
\end{align*}
Using Equation \eqref{eqn:dF} once again, we have $2ce^{-f}df(IX)=g^T(dF,df)$, and therefore
\begin{equation}\label{eqn:weighted_laplacian_F}
\Delta F-g^T( df,dF)=-8ce^{-f}b.
\end{equation}
Since $F$ is basic, $\Delta F= \Delta_{g_I^T} F$, and the conformal transformation law for the Laplacian in dimension $4$ gives
\(\Delta_{g_K}F=e^f(\Delta_{g^T}F-g^T( df,dF))=-8cb.\)
Finally, the identity $\grad_{g_K}F=2cI^TX$ proved in Lemma \ref{claim:gradient}, together with $|X|_g^2=1-b^2$ (cf. Equation \eqref{eqn:norm_X}), implies
$$
|dF|_{g_K}^2
=4|X|_{g_K}^2
=4e^{-f}(1-b^2)
=4F(1-cb).
$$
Therefore, wherever $F>0$,
\begin{align*}
\Delta_{g_K}\log F=\frac{\Delta_{g_K}F}{F}
-\frac{|dF|_{g_K}^2}{F^2}=-\frac{8cb}{F}-\frac{4(1-cb)}{F}=-\frac{4(1+cb)}{F}=-4e^f=-\frac{R_K}{2},
\end{align*}
where, in the last equality, we used $F=e^{-f}(1+cb)$ and $R_K=8e^f$ (Theorem \ref{thm:BHE-six-to-four}).
\end{proof}
\begin{thm}\label{thm:transverse_geometry}
Let $(M^6,I,J,g)$ be a connected compact GKBHE manifold with $r=1$. Then $F=e^{-f}(1+cb)$ is a smooth nonnegative basic function satisfying $X=-\frac{c}{2}I^T\grad_{g_K}F.$ In particular, $I^T\grad_{g_K}F$ is $I^T$-holomorphic. Moreover, the subset $\{F>0\}$ is open and dense, and $\rho_K=-dd^c\log F$ on $\{F>0\}$.
\end{thm}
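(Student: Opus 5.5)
Most of the statement is a repackaging of Lemmas~\ref{claim:db}--\ref{claim:laplacian}. I would prove the pieces in the order they appear.

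\textbf{Smoothness, basicness, the formula for $X$, holomorphicity.} Smoothness, nonnegativity and basicness of $F$ were recorded right after \eqref{eqn:F}. The formula $X=-\frac c2 I^T\grad_{g_K}F$ is Lemma~\ref{claim:gradient}. By item~\ref{item:GKBHE-transverse-automorphisms} of Proposition~\ref{prop:GKBHE-horizontal-symmetries}, $X=X_2$ projects to a real holomorphic Killing field of $(g^T,I^T)$. Since $X(f)=0$, it is also Killing for $g_K=e^{-f}g^T$. Hence $I^T\grad_{g_K}F=2cX$ is $I^T$-holomorphic.

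\textbf{Density of $\{F>0\}$.} Openness is clear from continuity. For density, note that $F>0$ exactly where $cb\neq-1$. This set contains $\mathcal U=\{|b|<1\}$, which is dense by Lemma~\ref{lem:density_U}.

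\textbf{The Ricci-potential identity.} On $\{F>0\}$ set $u:=\log F$. The goal is $\rho_K=-dd^c u$, and I would argue in two steps.

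\emph{Trace part.} On a Kähler surface, the trace of $dd^c u$ against $\omega_K$ equals $\Delta_{g_K}u$ up to a fixed normalization constant. With the conventions of Section~2, the trace of $\rho_K$ is $R_K/2$ times the same constant. Lemma~\ref{claim:laplacian} gives $\Delta_{g_K}\log F=-R_K/2$, so the traces of $\rho_K$ and $-dd^c u$ agree.

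\emph{Primitive part.} Since $\grad_{g_K}F$ is $I^T$-holomorphic (its $I^T$-rotation is holomorphic Killing), $\nabla^{g_K}dF$ is $I^T$-invariant. Equivalently, $dd^cF$ is a multiple of the $g_K$-Hessian of $F$ composed with $I^T$. Then
\[
dd^c\log F=\frac{dd^cF}{F}-\frac{dF\wedge d^cF}{F^2}.
\]
I would compute the primitive part of $\rho_K$ independently, through the Bochner/Kähler identity for a holomorphic Killing field $K=I^T\grad_{g_K}F$:
\[
\iota_K\rho_K=-\tfrac12\, d\Delta_{g_K}F,
\]
up to a sign to be fixed by the conventions. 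Lemma~\ref{claim:laplacian} gives $\Delta_{g_K}F=-8cb$, and Lemma~\ref{claim:db} gives $db$. Combining these determines $\iota_K\rho_K$, and likewise $\iota_{I^TK}\rho_K$ since $\rho_K$ is $I^T$-invariant, in terms of $df$ and $dF$.

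This only pins down $\rho_K$ on $\operatorname{span}\{K,I^TK\}$ and the mixed components. The remaining component, on the $g_K$-orthogonal complement, is then forced by the trace identity. That is enough wherever $dF\neq0$. I would compare with $-dd^c\log F$ evaluated on the same vectors, using the first integral $|dF|^2_{g_K}=4F(1-cb)$ from the proof of Lemma~\ref{claim:laplacian}.

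\emph{Alternative route.} Instead of the Bochner identity, one can use \eqref{eq:BHE-transverse-Ricci}, $\rho_K=-2d\eta_I$, and express $d\eta_I$ through $db=-\iota_Xd\eta_I$ (item~\ref{item:GKBHE-curvature-potentials}) together with the trace condition \eqref{eq:BHE-curvature-traces}. This gives $d\eta_I(e_2,\cdot)$ and $d\eta_I(e_3,\cdot)$ directly in the adapted frame of Theorem~\ref{thm:normal_form_of_J}. The component $d\eta_I(e_4,e_5)$ then comes from the trace.

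\textbf{Extending past $\{dF=0\}$.} Both sides of the identity are real analytic (Remark~\ref{rmk:real_analiticity}), and $\{dF\neq0\}\cap\{F>0\}$ is dense in $\{F>0\}$: otherwise $X$ would vanish on an open set, so $r=0$. Hence the identity extends to all of $\{F>0\}$.

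\textbf{Main obstacle.} The hard step is the primitive-part matching. Concretely, one must check that $dd^c F/F$ along $\mathcal H\ominus\operatorname{span}\{X,IX\}$ reproduces $d\eta_I(e_4,e_5)$, with consistent signs and factors of $2$ under the paper's $d^c$ and trace conventions. I would first carry this out in the adapted frame, computing $\nabla^{g_K}dF$ from $\grad_{g_K}F=2cI^TX$ and the $\nabla^{B,J}$-parallelism of $JV_J$.
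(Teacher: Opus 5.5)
Your plan is correct and is essentially the paper's argument. With $\beta:=\rho_K+dd^c\log F$, the paper first gets $\operatorname{tr}_{\omega_K}\beta=0$ from Lemma~\ref{claim:laplacian}. It then shows $\iota_X\beta=0$ on $\mathcal U$ by combining two identities: $\iota_X\rho_K=-2\iota_Xd\eta_I=2\,db$ (your ``alternative route'') and $\iota_Xdd^c\log F=-d(\iota_Xd^c\log F)=-2\,db$, the latter from Cartan's formula using $X(F)=0$, holomorphicity of $X$ and $|X|^2=1-b^2$. It concludes because a nonzero anti-self-dual form is nondegenerate, and extends from $\mathcal U$ to $\{F>0\}$ by continuity and density of $\mathcal U$, so real analyticity is not needed there.

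Consequently your ``main obstacle'' is not a real obstacle. Once the traces and the $\iota_X$-contractions of $\rho_K$ and $-dd^c\log F$ agree, their components on $\mathcal H\ominus\operatorname{span}\{X,IX\}$ agree automatically, so no Hessian computation is required. Also note a sign slip: $I^T\grad_{g_K}F=-2cX$, not $2cX$ (harmless for holomorphicity).
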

\begin{proof}
The first part of the theorem follows from the definition of $F$ and Lemma \ref{claim:gradient}. The fact that $I^T\grad_{g_K}F$ is $I^T$ holomorphic is an obvious consequence of item \ref{item:GKBHE-transverse-automorphisms} in Proposition \ref{prop:GKBHE-horizontal-symmetries}. Now we prove that $\{F>0\}$ is a dense open subset of $M$, and
$
\rho_K=-dd^c\log F
$
on $\{F>0\}$.
Since $\mathcal U\subset\{F>0\}$ and $\mathcal U$ is dense in $M$ by Lemma \ref{lem:density_U}, the open set $\{F>0\}$ is also dense. 

To show that $\rho_K=-dd^c\log F$ we work on $\mathcal U$. On $\{F>0\}$, consider the basic real $(1,1)$-form $\beta:=\rho_K+dd^c\log F.$ By Lemma \ref{claim:laplacian}, $$
\mathrm{tr}_{\omega_K}\beta
=R_K+2\Delta_{g_K}\log F=0.
$$
Thus $\beta$ is anti-self-dual with respect to the transverse K\"ahler metric. By items \ref{item:GKBHE-horizontal-projectable} and \ref{item:GKBHE-horizontal-f-invariant} in Proposition \ref{prop:GKBHE-horizontal-symmetries}, $X(F)=0$. Since $X$ is  $I^T$-holomorphic, this implies $\mathcal L_Xd^c\log F=0.$ Therefore, by Cartan's formula, $
\iota_Xdd^c\log F=-d\bigl(\iota_Xd^c\log F\bigr).$ Using $d^c\log F=-\frac{I^TdF}{F}$, Equation \eqref{eqn:dF}, and $|X|^2=1-b^2$, we obtain
$$
\iota_Xd^c\log F
=-\frac{dF(I^TX)}{F}
=-\frac{2ce^{-f}|X|^2}{F}
=-\frac{2ce^{-f}(1-b^2)}{e^{-f}(1+cb)}
=2(b-c).
$$
Since $c$ is constant, it follows that $$
\iota_Xdd^c\log F=-2\,db.
$$
On the other hand, Equation \eqref{eqn:diff_curvatures}, together with Theorem \ref{thm:BHE-six-to-four} gives
$$
\iota_X\rho_K=-2\iota_Xd\eta_I=2\,db.
$$
Consequently, $\iota_X\beta=0$ on $\mathcal U$. Since $X$ is nowhere vanishing on $\mathcal U$ and a nonzero anti-self-dual $2$-form in dimension four is nondegenerate, we conclude that $\beta=0$ on $\mathcal U$. Finally, $\beta$ is smooth on $\{F>0\}$, and $\mathcal U$ is dense in this set. Hence $\beta=0$ throughout $\{F>0\}$.
\end{proof}
\begin{rmk}
Note that the locus $F^{-1}(0)$, and hence $M \setminus \mathcal{U}$, is nonempty. This can be proved as follows: Let $p$ be a point where $F$ attains its minimum and assume by contradiction that $F(p)>0$. \par
Since $p$ is a minimum $dF(p)=0$ and hence $X(p)=0$. This implies that $(1-b(p)^2)=0$ and hence $cb(p)=\pm1$. If $cb(p)=-1$, then the definition of $F$ (see \eqref{eqn:F}), would imply that $F(p)=0$. Therefore $cb(p)=1$. Equation \eqref{eqn:weighted_laplacian_F}, then reads as 
\[
\Delta F(p)=-8 e^{-f(p)}<0.
\]
But since $p$ is a minimum, $\Delta F(p)\ge 0$, giving a contradiction.
\end{rmk}
\begin{ex}
Let \(0<q<1\), let
\[
H_q=
\frac{\mathbb C^2\setminus\{0\}}
{\langle(z^1,z^2)\mapsto(qz^1,qz^2)\rangle},
\]
and let \((E,I_E,g_E)\) be a flat elliptic curve. On
\(M=H_q\times E\cong\mathrm{SU}(2)\times\mathbb T^3\), consider the metric
\[
g=
\frac{dz^1\,d\bar z^1+dz^2\,d\bar z^2}
{|z^1|^2+|z^2|^2}+g_E.
\]
We write a real tangent vector as \((v,v_E)=(v^1,v^2,v_E)\), where
\(v^j=dz^j(v)\) and \(v_E\in TE\). Both GKBHE structures have the same
first complex structure
\[
I(v^1,v^2,v_E)=(iv^1,iv^2,I_Ev_E).
\]
For \(c=1\), define
\[
J_{1}(v^1,v^2,v_E)=(iv^1,-iv^2,-I_Ev_E).
\]
For \(c=-1\), set
\[
a=\frac{|z^1|^2-|z^2|^2}{|z^1|^2+|z^2|^2},
\qquad
h=\frac{2z^1z^2}{|z^1|^2+|z^2|^2},
\]
and define
\[
J_{-1}(v^1,v^2,v_E)
=
\bigl(
ia\,v^1+ih\,\overline{v^2},
ia\,v^2-ih\,\overline{v^1},
I_Ev_E
\bigr).
\]
All these expressions descend to \(M\).

Under the Lie group identification
\(M\cong\mathrm{SU}(2)\times\mathbb T^3\), the metric is bi-invariant,
\(I\) is left-invariant, and \(J_{1}\) and \(J_{-1}\) are right-invariant.
Their Bismut connections are flat, and the torsions of \(J_{1}\) and
\(J_{-1}\) are both opposite to that of \(I\). Thus
\((g,I,J_{1})\) and \((g,I,J_{-1})\) are GKBHE structures. Moreover, all three
complex structures induce the same orientation.

The normalized soliton potential is constant, \(f=f_0\).
The radial vector field
\[
T=\sum_{j=1}^{2}
\left(
z^j\partial_{z^j}+\bar z^j\partial_{\bar z^j}
\right)
\]
descends to a unit vector field on \(M\). The Lee forms satisfy
\[
\theta_I=\theta_{J_{1}}=-2T^\flat,
\qquad
\theta_{J_{-1}}=2T^\flat.
\]
Consequently,
\[
V_I=-T,\qquad V_{J_{1}}=-T,\qquad V_{J_{-1}}=T,
\]
which identifies the two choices of \(c\).

Their commutator Poisson tensors
\(\sigma_{*}=\frac12[I,J_{*}]g^{-1}\), for $*=1,-1$ behave differently.
The structures \(I\) and \(J_1\) commute, so $\sigma_1=0.$
For \(J_{-1}\), direct computation gives
\[
\begin{aligned}
{}[I,J_{-1}]\partial_{\bar z^1}
&=\frac{4z^1z^2}{|z^1|^2+|z^2|^2}\,\partial_{z^2},\\
[I,J_{-1}]\partial_{\bar z^2}
&=-\frac{4z^1z^2}{|z^1|^2+|z^2|^2}\,\partial_{z^1}.
\end{aligned}
\]
Since
\(g^{-1}(dz^j)=2(|z^1|^2+|z^2|^2)\partial_{\bar z^j}\), the
\(I\)-holomorphic Poisson tensor is
\[
\sigma_{-1}^{2,0}
=4z^1z^2\,\partial_{z^1}\wedge\partial_{z^2},
\qquad
\sigma_{-1}=\sigma_{-1}^{2,0}+\overline{\sigma_{-1}^{2,0}}.
\]
This bivector is invariant under the contraction and hence is globally
defined on \(M\). It has real rank four on \(\{z^1z^2\neq0\}\), where
its image is tangent to the Hopf factor, and vanishes on
\(\{z^1z^2=0\}\).

Finally, writing \(b_*=g(IV_I,J_{*}V_{J_*})\), for $*=1,-1$, we obtain
\[
b_{1}=a,\qquad b_{-1}=-a.
\]
The two structures therefore determine the same function
\[
F_{1}=e^{-f_0}(1+b_{1})
=
F_{-1}=e^{-f_0}(1-b_{-1})
=
2e^{-f_0}\frac{|z^1|^2}{|z^1|^2+|z^2|^2},
\]
we denote this function by $F$. 
For both structures, the pointwise horizontal rank is one outside the
two disjoint tori
\[
D_1=\{z^1=0\}\cong\mathbb T^4,
\qquad
D_2=\{z^2=0\}\cong\mathbb T^4,
\]
and zero on \(D_1\sqcup D_2\). More precisely, 
\[
\{F=0\}=D_1,
\qquad
\{F=2e^{-f_0}\}=D_2.
\]
For \(c=-1\), \(D_1\sqcup D_2\) is also exactly the zero locus of the
commutator Poisson tensor.
\end{ex}
\section{The converse construction} \label{section:converse}
In this section, we establish local and global converses to
Theorem~\ref{thm:transverse_geometry}. We start with a K\"ahler surface
$(S,I^T,g_K,\omega_K)$ endowed with a smooth nonnegative function $F$
such that $I^T\grad_{g_K}F$ is real holomorphic and
\[
\rho_K=-dd^c\log F
\qquad\text{on }S_+:=\{F>0\}.
\]
The crucial point is that these assumptions imply the box
equation~\eqref{eq:BHE-four-dimensional-equation} with $\alpha=0$.

\subsection{Preliminary identities}
In this subsection, we collect several differential identities
which will be useful to prove the converse construction. 

\begin{prop}\label{prop:kahler-identities}
Let $(S,I^T,g_K,\omega_K)$ be a connected K\"ahler orbifold of dimension $2n$, $n \ge 2$, and let $F$ be a smooth nonnegative function such that $S_+:=\{F>0\}\neq\varnothing.$
Let $\rho$ and $R$ denote the Ricci form and scalar curvature of $g_K$.
Assume that $I^T\grad F$ is holomorphic on $S$ and that
\[
\rho=-dd^c\log F
\]
on $S_+$. Then $S_+$ is dense in $S$, and the following identities hold:
\begin{enumerate}[label=\textup{(\alph*)}, ref=(\alph*)]
    \item\label{item:kahler-rho-hamiltonian}
    $\displaystyle \iota_{(I^T\grad F)}\rho=\frac12\,d(\Delta F)$ on $S$;
    
    \item\label{item:kahler-delta-energy}
    $\displaystyle d(\Delta F)=2d\left(\frac{|dF|^2}{F}\right)$ on $S_+$;
    
    \item\label{item:kahler-box}
    $\displaystyle d(RF)=-d(\Delta F)$ on $S$, hence $RF+\Delta F$ is constant on $S$;

    \item\label{item:potential}
    $ d^c\log F\wedge\rho \wedge \omega_K^{[n-2]}=-\frac12\,d^cR\wedge\omega_K^{[n-1]}$ on $S_+$.
    
    \item\label{item:kahler-ricci-square}
    $\displaystyle \rho\wedge\rho\wedge \omega_K^{[n-2]}=\frac12\,dd^cR\wedge\omega^{[n-1]}_K$ on $S$, where $\omega_K^{[r]}=\frac{\omega_K^r}{r!}$. For $n=2$, $\displaystyle \rho\wedge\rho=\frac12\,dd^cR\wedge\omega_K$ on $S$.
\end{enumerate}
Furthermore, if $S$ is compact and $R>0$, then $S_+$ is connected.
\end{prop}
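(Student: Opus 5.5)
Set $\xi:=I^T\grad F$, a real holomorphic vector field on $S$ with $\iota_\xi\omega_K=-dF$ (up to the sign convention $\omega(X,Y)=g(IX,Y)$). The plan is to derive everything from two ingredients. The first is the standard Kähler identity for a Hamiltonian holomorphic field, $\iota_\xi\rho=\tfrac12 d(\Delta F)$, valid on all of $S$ (up to sign conventions). The second is the hypothesis $\rho=-dd^c\log F$ on $S_+$.

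\textbf{Density of $S_+$.} If $F$ vanished on an open set $W$, then $\xi=0$ on $W$. Since $\xi$ is holomorphic, hence real analytic in holomorphic charts (or since a Killing field vanishing to first order at a point vanishes), $\xi\equiv0$ and $dF\equiv0$. Connectedness then makes $F$ constant, hence $F\equiv0$, which contradicts $S_+\neq\varnothing$.

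\textbf{Item (a).} This is the classical formula: for a Killing field $\xi$ with $\nabla\xi$ skew and $I$-linear, $\iota_\xi\rho$ equals the differential of the $I$-trace of $\nabla\xi$, up to a constant, i.e. $\tfrac12 d\Delta F$. I would prove it on $S$ directly, via $\mathcal L_\xi\rho=0$ together with Cartan's formula and the Ricci identity $\nabla^*\nabla \xi$, and fix the constant with our conventions.

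\textbf{Item (b).} On $S_+$, $\xi$ preserves $F$ and $I$, so $\mathcal L_\xi d^c\log F=0$. Cartan's formula then gives $\iota_\xi\rho=-\iota_\xi dd^c\log F=d(\iota_\xi d^c\log F)$. Next compute $\iota_\xi d^c\log F=-dF(I\xi)/F=|dF|^2/F$, up to sign. Comparing with (a) yields (b).

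\textbf{Item (c).} Tracing the hypothesis gives $R=-2\Delta\log F=-2\Delta F/F+2|dF|^2/F^2$, so $RF=-2\Delta F+2|dF|^2/F$. Combining this with (b), $d(RF)=-2d\Delta F+d\Delta F=-d\Delta F$ on $S_+$. Density then gives the identity on $S$, and connectedness shows $RF+\Delta F$ is constant.

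\textbf{Item (d).} On $S_+$, write $u=\log F$, so $\rho=-dd^cu$ and $R=-2\Delta u$. The identity becomes $d^cu\wedge dd^cu\wedge\omega^{[n-2]}=\Delta u\, d^c(\ldots)$, i.e. a relation between $d^cu\wedge dd^cu\wedge\omega^{[n-2]}$ and $d^c\Delta u\wedge\omega^{[n-1]}$. This is where the Hamiltonian structure must enter. Contracting with $\xi$ reduces the top-degree form identity to a scalar one. Since both sides are $(2n-1)$-forms, it suffices to pair them with a basis; the key input is (c) rewritten as $dR=-(R\,dF+d\Delta F)/F$, together with $d\Delta F=2\iota_\xi\rho$. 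I would use $\alpha\wedge\omega^{[n-1]}=-\iota_{I\alpha^\sharp}dV$ from Lemma~\ref{lem:divergence} to convert both sides to interior products of the volume form. The left side then becomes $\iota$ of $\rho(\cdot)$ applied to $dF$, namely $(\tr\rho)\,dF-\rho(\grad F)$-type terms, via the standard identity $\beta\wedge\gamma\wedge\omega^{[n-2]}$ for a $1$-form $\gamma$ and $(1,1)$-form $\beta$. It should then match $-\tfrac12 F\,dR$ using (a) and (c). This bookkeeping is the main obstacle.

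\textbf{Item (e).} On $S_+$, $\rho\wedge\rho\wedge\omega^{[n-2]}=-d(d^cu\wedge\rho\wedge\omega^{[n-2]})$ because $d\rho=0$. By (d) this equals $\tfrac12 dd^cR\wedge\omega^{[n-1]}$. Both sides are smooth on $S$, so density extends the identity to all of $S$.

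\textbf{Connectedness of $S_+$.} Assume $S$ is compact with $R>0$. The zero set $F^{-1}(0)$ consists of minima, so it lies in the zero set of the Killing field $\xi$, a union of totally geodesic submanifolds. By (c), $\Delta F=\kappa-RF$, so at zeros of $F$ we get $\Delta F=\kappa\ge0$. I expect $\kappa>0$ by integrating: $\int RF=\kappa\,\mathrm{vol}$ and $R>0$. Then near $F=0$ the Hessian is nonzero, so the zero set is a union of complex submanifolds of real codimension at least $2$ (fixed loci of a holomorphic isometry, with $F$ vanishing to order two transversally). Removing a codimension-$\ge2$ subset from a connected orbifold leaves it connected, which proves the claim.
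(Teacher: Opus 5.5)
Your proposal is correct and follows essentially the same route as the paper: density via unique continuation for the holomorphic field $\xi$, (a) via the Bochner identities for the Killing field $\xi$, and (b), (c), (e) exactly as you describe. The bookkeeping you flag in (d) is a one-line computation there: contract $\rho\wedge\omega_K^{[n-1]}=\frac R2\,\omega_K^{[n]}$ with $\xi$, use $\iota_\xi\omega_K=-dF$, (a) and (c) to get $dF\wedge\rho\wedge\omega_K^{[n-2]}=-\frac F2\,dR\wedge\omega_K^{[n-1]}$, then divide by $F$ and apply $I^T$. The only real difference is in connectedness: the paper uses $\xi\not\equiv0$ (otherwise $\rho=0$) together with the fact that $F$ is constant on each component of the zero set of $\xi$, whereas your argument through $\kappa>0$ and $\nabla\xi=I^T\circ\nabla^2F\neq0$ at zeros of $F$ is an equally valid way to get codimension at least two.
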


\begin{proof}
We first show that $S_+$ is dense. Otherwise, $F$ would vanish on a nonempty open subset of $S$, and hence so would $I^T\grad F$. Since $I^T\grad F$ is holomorphic and $S$ is connected, it would follow that $I^T\grad F$ vanishes identically. Thus $F$ would be constant and, since it vanishes somewhere, identically zero, contradicting $S_+\neq\varnothing$.

The remaining computations are local. We work in a uniformizing chart
and suppress the pullback notation. We now prove \ref{item:kahler-rho-hamiltonian}. Since
\[
\iota_{(I^T\grad F)}\omega_K=-dF,
\]
we have $\mathcal L_{(I^T\grad F)}\omega_K=0$. Moreover, $I^T\grad F$ is holomorphic by hypothesis, so $\mathcal L_{(I^T\grad F)}I^T=0$. Hence $I^T\grad F$ preserves both $I^T$ and $\omega_K$, and is therefore Killing. With our conventions $\Delta=-d^*d$ and $\nabla^*\nabla=-\operatorname{tr}\nabla^2$, the standard Bochner identities for a function and a Killing vector field give
\[
\nabla^*\nabla(\grad F)
=-\grad(\Delta F)-\operatorname{Rc}(\grad F),
\qquad
\nabla^*\nabla I^T\grad F=\operatorname{Rc}(I^T\grad F);
\]
see, for instance, \cite[Equations (14.2) and (16.4)]{Biq}. Since $\nabla I^T=0$, the rough Laplacian commutes with $I$. Moreover, the Ricci tensor of a K\"ahler metric is $I$-invariant, and hence $\operatorname{Rc}I^T=I^T\operatorname{Rc}$.
Applying $I^T$ to the first Bochner identity and comparing it with the second one, we obtain
\[
-I^T\grad(\Delta F)-I^T\operatorname{Rc}(\grad F)
=I^T\operatorname{Rc}(\grad F).
\]
Consequently,
\[
\grad(\Delta F)=-2\operatorname{Rc}(\grad F).
\]
Using $\rho(Y,Z)=\operatorname{Rc}(I^TY,Z)$, for every vector field $Y$ we therefore have
\[
(\iota_{(I^T\grad F)}\rho)(Y)
=\rho( I^T \grad F,Y)
=-\operatorname{Rc}(\grad F,Y)
=\frac12\,d(\Delta F)(Y),
\]
which proves \ref{item:kahler-rho-hamiltonian}.
We next prove \ref{item:kahler-delta-energy}. We work on $S_+$. Since $(I^T\grad F)F=0$ and $I^T\grad F$ is holomorphic,
\[
\mathcal L_{(I^T\grad F)}d^c\log F=d^c\mathcal L_{(I^T\grad F)}\log F=0.
\]
Cartan's formula and $\rho=-dd^c\log F$ then give
\[
\iota_{(I^T\grad F)}\rho
=-\iota_{(I^T\grad F)}dd^c\log F
=d\bigl(\iota_{(I^T\grad F)}d^c\log F\bigr).
\]
We have
\[
\iota_{(I^T\grad F)}d^c\log F
=-\frac{dF(I^T(I^T\grad F))}{F}
=\frac{|dF|^2}{F}.
\]
Thus
\begin{equation} \label{eqn:contraction}
    \iota_{(I^T\grad F)}\rho=d\left(\frac{|dF|^2}{F}\right),
\end{equation}
and \ref{item:kahler-delta-energy} follows from \ref{item:kahler-rho-hamiltonian}. To prove \ref{item:kahler-box}, we work in $S_+$. Trace the identity $\rho=-dd^c\log F$ with respect to $\omega_K$. We obtain
\[
R=-2\Delta\log F
=-2\frac{\Delta F}{F}
+2\frac{|dF|^2}{F^2}.
\]
Multiplying by $F$ and differentiating yields
\[
d(RF)
=-2d(\Delta F)
+2d\left(\frac{|dF|^2}{F}\right).
\]
By \ref{item:kahler-delta-energy}, the last term is $d(\Delta F)$, and hence $d(RF)=-d(\Delta F)$. Both sides extend smoothly to $S$, and by the density of $S_+$, this identity holds on all of $S$. Since $S$ is connected, $\Delta F+RF$ is constant. 

It remains to prove \ref{item:potential} and
\ref{item:kahler-ricci-square}. In complex dimension $n$,
the trace identity reads
\[
\rho\wedge\omega_K^{[n-1]}
=\frac R2\,\omega_K^{[n]}.
\]
Contracting with $I^T \grad F$ and using \ref{item:kahler-rho-hamiltonian} together with $\iota_{(I^T\grad F)}\omega_K=-dF$, we obtain
\[
\frac12\,d(\Delta F)\wedge\omega_K^{[n-1]}
-dF\wedge\rho\wedge\omega_K^{[n-2]}
=-\frac R2\,dF\wedge\omega_K^{[n-1]}.
\]
Consequently, by \ref{item:kahler-box},
\[
\begin{aligned}
dF\wedge\rho\wedge\omega_K^{[n-2]}
&=\frac12\bigl(d(\Delta F)+R\,dF\bigr)
  \wedge\omega_K^{[n-1]}=-\frac F2\,dR\wedge\omega_K^{[n-1]}.
\end{aligned}
\]
On $S_+$, dividing by $F$ and using the $I^T$-invariance
of $\rho$ and $\omega_K$ yields
\begin{equation}\label{eqn:potential}
d^c\log F\wedge\rho\wedge\omega_K^{[n-2]}
=-\frac12\,d^cR\wedge\omega_K^{[n-1]}.
\end{equation}
This proves \ref{item:potential}.
Taking the exterior derivative and using
$d\rho=d\omega_K=0$ and $dd^c\log F=-\rho$, we obtain
\[
\rho\wedge\rho\wedge\omega_K^{[n-2]}
=\frac12\,dd^cR\wedge\omega_K^{[n-1]}
\]
on $S_+$. Both sides are smooth on $S$, so the identity
extends to all of $S$ by density.

We prove the last claim.
The vector field $I^T\grad F$ is not identically zero. Indeed,
otherwise $F$ would be constant and, since it is not identically
zero, it would be a positive constant. The identity
$\rho=-dd^c\log F$ would then imply $\rho=0$, contrary to $R>0$.
Furthermore, $I^T\grad F$ is Killing and holomorphic, and its
zero set is precisely $\{p\in S\mid dF(p)=0\}$.
Since $F\geq0$, one has $dF=0$ at every point of $\{F=0\}$,
and hence
\[
\{F=0\}\subseteq\{p\in S\mid dF(p)=0\}
=\{p\in S\mid I^T\grad F(p)=0\}.
\]
By the orbifold version of \cite{Kob}, which follows by applying
the manifold statement in uniformizing charts, every connected
component of the zero set of $I^T\grad F$ is a totally geodesic
complex suborbifold of positive codimension, and such connected
components are finitely many, since $S$ is compact.
The function $F$ is constant on each of these components.
Therefore, if such a component contains a point at which $F=0$,
then $F$ vanishes identically on that component. Consequently,
$\{F=0\}$ is a finite union of closed suborbifolds of real
codimension at least two. Its complement $S_+$ is therefore
connected.
\end{proof}
\begin{rmk}
Note that when $S$ is compact and $R>0$, $S \setminus S_+=F^{-1}(0) \neq \emptyset$. Assume the contrary. Then 
\[
0=\int_S d(-d^c \log F \wedge \omega_K^{[n-1]})=\int_S \rho \wedge \omega_K^{[n-1]}=\int_S \frac{R}{2} \omega_K^{[n]}>0,
\]
giving the desired contradiction.
\end{rmk}
\begin{rmk} \label{rmk:kappa}
By Proposition \ref{prop:kahler-identities},
\begin{equation*}
\kappa:= RF+\Delta F
\end{equation*}
is constant on $S$. We now derive an alternative expression for $\kappa$ on $S_+$. On $S_+$, the identity $\rho=-dd^c \log F$ gives $R=-2 \Delta \log F=-2 \frac{\Delta F}{F}+2 \frac{|dF|^2}{F^2}$. Therefore, $ \frac{|dF|^2}{F}=\Delta F +\tfrac12 RF= \Delta F +(1-\tfrac12) RF$. It follows that
\begin{equation}\label{eq:k_converse}
\kappa= \frac{|dF|^2}{F}+\frac {RF}{2}.
\end{equation}
on $S_+$. If $R>0$, this expression shows that $\kappa>0$ on $S_+$ and hence everywhere on $S$, since $\kappa$ is constant. Furthermore,
\begin{equation}\label{eqn:local_converse_first_integrals}
|dF|^2=\kappa F-\frac12RF^2,
\qquad
\Delta F=\kappa-RF.
\end{equation}
\end{rmk}
The next Proposition shows that the condition $\rho=-dd^c \log F$ is implied by a Monge-Ampere type equation. 
\begin{prop} \label{prop:MA}
Let $(S, I, g_K)$ be a compact K\"ahler orbifold surface. Assume that there exists a non-negative function $F$ and denote $S_+:=\{p \in S \ | \ F > 0 \}$. Assume that there exists a never-vanishing holomorphic volume form $\Omega$ on $S_+$. Then if $\omega_K^2= 2 F^2 \Omega \wedge \bar \Omega$ on $S_+$ then $\rho=-dd^c \log F$ on $S_+$.
\end{prop}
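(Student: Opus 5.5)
The plan is to use the standard local formula for the Ricci form of a Kähler metric in terms of a holomorphic volume form, working in a uniformizing chart of $S_+$ so that everything is a computation on an open subset of $\mathbb C^2$ (with suitable group invariance, which is irrelevant for local identities).

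First, fix a point of $S_+$ and holomorphic coordinates $(z^1,z^2)$ in a uniformizing chart. Write $\Omega=h\,dz^1\wedge dz^2$, with $h$ holomorphic and nowhere vanishing by hypothesis. Then $\Omega\wedge\bar\Omega=|h|^2\,dz^1\wedge dz^2\wedge d\bar z^1\wedge d\bar z^2$, and $\omega_K^2$ equals a positive multiple of $\det(g_{j\bar k})$ times the standard volume form. The hypothesis $\omega_K^2=2F^2\,\Omega\wedge\bar\Omega$ therefore reads
\[
\det(g_{j\bar k})=C\,F^2|h|^2
\]
for a universal positive constant $C$. Checking that the sign and orientation of $\Omega\wedge\bar\Omega$ make the right-hand side positive is a routine point.

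Next, recall that with the paper's conventions $d^c=i(\bar\partial-\partial)$, one has $dd^c=2i\partial\bar\partial$. The Ricci form of a Kähler metric is $\rho=-i\partial\bar\partial\log\det(g_{j\bar k})$, that is, $\rho=-\tfrac12 dd^c\log\det(g_{j\bar k})$. Taking $\log$ of the determinant relation gives
\[
\log\det(g_{j\bar k})=\log C+2\log F+\log h+\log\bar h .
\]
The function $\log h$ is locally well defined on a simply connected chart, and $\log|h|^2$ is pluriharmonic. Applying $-\tfrac12dd^c$, the constant and the pluriharmonic term drop out, and we obtain $\rho=-\tfrac12\cdot 2\,dd^c\log F=-dd^c\log F$ on the chart. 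Since the chart was arbitrary, the identity holds on all of $S_+$. This also matches the trace normalization $R=-2\Delta\log F$ used in Proposition~\ref{prop:kahler-identities}.

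The only real obstacle is bookkeeping of normalization constants: the factor relating $\omega_K^2$ to $\det(g_{j\bar k})$, the sign convention $\rho(X,Y)=\mathrm{Rc}(IX,Y)$, and $dd^c=2i\partial\bar\partial$. I would verify that $\rho=-\tfrac12dd^c\log\det g$ with the paper's conventions by testing on a product of round $\mathbb{CP}^1$'s, or by tracing: the trace of $\rho$ should be $R$, and the trace of $-\tfrac12dd^c u$ is $-\Delta u$ with the paper's normalization. Constants multiplying $F^2|h|^2$ are irrelevant, so the factor $2$ in the hypothesis plays no role.
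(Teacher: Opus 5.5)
Your proposal is correct and follows the paper's proof essentially verbatim: write $\Omega=h\,dz^1\wedge dz^2$, read the hypothesis as $\det(g_{j\bar k})=C\,F^2|h|^2$, and apply $\rho=-\tfrac12 dd^c\log\det(g_{j\bar k})$, so that the constant and the pluriharmonic term $\log|h|^2$ drop out. Your normalization checks are consistent with the paper's conventions: $dd^c=2i\partial\bar\partial$, $\Omega\wedge\bar\Omega$ is positive in complex dimension two, and $\operatorname{tr}_{\omega}(dd^cu)=2\Delta u$.
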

\begin{proof}
Let $(z_1,z_2)$ be holomorphic coordinates in $S_+$. Since $\Omega$ is holomorphic and never vanishing, $\Omega=h \, dz^1 \wedge dz^2$, for some never vanishing holomorphic function $h$ on $S_+$. The equation $\omega_K^2= 2 F^2 \Omega \wedge \bar \Omega$ can be read as 
\[
\det (g_K)= F^2 |h|^2.
\]
Then 
\[
\rho=-\tfrac12 dd^c \log (\det (g_K))= -dd^c \log F -\tfrac 12 dd^c \log |h|^2=-dd^c \log F,
\]
since $\log |h|^2$ is pluriharmonic. 
\end{proof}
\subsection{The local converse theorem}
By a GKBHE orbifold we mean an orbifold endowed with compatible orbifold tensors $(I,J,g)$ whose lifts to every uniformizing chart define a GKBHE structure. 

\begin{thm}\label{thm:local_converse}
Let $(S_+,I^T,g_K,\omega_K)$ be a connected K\"ahler orbifold
surface with Ricci form $\rho_K$ and positive scalar curvature $R_K$.
Suppose that $F$ is a strictly positive smooth orbifold function satisfying
\begin{equation}\label{eqn:local_converse_assumptions}
I^T\grad_{g_K}F \text{ is real holomorphic},
\qquad
\rho_K=-dd^c\log F.
\end{equation}
Then $\kappa:=R_KF+\Delta_{g_K}F$ is a positive constant. Let $p:M_+\to S_+$ be a principal $\mathbb T^2$-orbibundle
equipped with connection one-forms $\eta,\eta_I$ satisfying $
d\eta=0,$ and $d\eta_I=-\frac12p^*\rho_K.$ Set
\begin{equation}\label{eqn:local_converse_metric}
f:=\log\left(\frac{R_K}{8}\right),
\qquad
g:=\eta^2+\eta_I^2+e^fg_K,
\qquad
\omega_I:=\eta\wedge\eta_I+e^f\omega_K.
\end{equation}
Let $\mathcal H:=\ker\eta\cap\ker\eta_I$. Define $I$ by the horizontal
lift of $I^T$ on $\mathcal H$ and the standard complex structure on
the vertical frame dual to $(\eta,\eta_I)$. Denote this frame by
$(V_I,W_I)$, so that $IV_I=W_I$. Then $g$ is $I$-compatible,
with fundamental form $\omega_I$.
\noindent
For each $c\in\{\pm1\}$, set
\begin{equation}\label{eqn:local_converse_b_X}
b_c:=c\left(\frac{R_KF}{\kappa}-1\right),
\qquad
X_c:=-\frac{4c}{\kappa}I^T\grad_{g_K}F.
\end{equation}
Since $1+cb_c=\frac{R_KF}{\kappa}>0$, the following formulas define a smooth endomorphism $J_c$ of $TM_+$:
\begin{equation}\label{eqn:local_converse_J}
\begin{aligned}
J_cV_I&=cb_c\,IV_I+cX_c,\\
J_c(IV_I)&=-cb_c\,V_I+IX_c,\\
J_cY&=-c\,g(X_c,Y)V_I-g(IX_c,Y)IV_I-cIY+\frac{c}{1+cb_c}
\left(g(X_c,Y)IX_c-g(IX_c,Y)X_c\right),
\qquad Y\in\mathcal H.
\end{aligned}
\end{equation}
Then $(M_+,I,J_c,g)$ is a GKBHE orbifold for each $c\in\{\pm1\}$.
\end{thm}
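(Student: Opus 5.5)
First I will check that $(M_+,I,g)$ is BHE. By Proposition~\ref{prop:kahler-identities}\ref{item:kahler-ricci-square} we have $\rho_K\wedge\rho_K=\tfrac12 dd^cR_K\wedge\omega_K$, and Remark~\ref{rmk:kappa} gives that $\kappa$ is a positive constant. Hence the converse part of Theorem~\ref{thm:BHE-six-to-four}, applied with $\alpha=0$, shows that $(I,g)$ is BHE with $\theta_I=2\eta+df$ and $\nabla V_I=0$ (because $d\eta=0$ and $\iota_{V_I}H=0$). All subsequent work is local, so I will do it in a uniformizing chart.

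Next I will check the algebra of $J_c$. Using \eqref{eqn:local_converse_first_integrals}, I get $|X_c|^2=\frac{16}{\kappa^2}|dF|^2_{g}$. Since $|Y|_g^2=e^f|Y|_{g_K}^2$ and $e^f=R_K/8$, this becomes $|X_c|_g^2=\frac{2R_K}{\kappa^2}(\kappa F-\tfrac12R_KF^2)=1-b_c^2$, so $b_c^2+|X_c|^2=1$. In the frame $e_0=V_I$, $e_1=IV_I$, $e_2=cX_c/s$, $e_3=Ie_2$, with $e_4,e_5$ completing it, $J_c$ then takes exactly the matrix \eqref{eqn:matrix_J}. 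This is the reverse of Theorem~\ref{thm:normal_form_of_J}, and it shows that $J_c$ is $g$-orthogonal with $J_c^2=-\mathrm{Id}$ where $X_c\ne0$. The formula \eqref{eqn:local_converse_J} is polynomial in $X_c$ and $b_c$, apart from the factor $(1+cb_c)^{-1}$, which is smooth since $F>0$. So the identities $J_c^2=-1$ and $g(J_c\cdot,J_c\cdot)=g$ extend across the zeros of $X_c$ by density; the zeros of a nontrivial Killing field are nowhere dense.

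By Proposition~\ref{prop:GK-parallel-reflection} it suffices to treat $c=1$. Indeed, $V_I$ is parallel with $\iota_{V_I}H=0$, and I will check directly that $J_{-1}=-QJ_1Q$. Under $Q$, $V_I\mapsto -V_I$ and the other frame vectors are fixed; comparing with the two matrices, the signs of $b$ and of the $s$-entries flip correctly. For $c=1$, set $L_\pm:=\ker(IJ_1\mp1)$. From the normal form, $L_-$ is spanned by $\{V_I,IV_I,X_1,IX_1\}$ rotated by the angle determined by $b$, and $L_+=\mathcal E^\perp$ is oriented oppositely. Concretely, $L_+=\mathrm{span}\{\tfrac{1}{\sqrt2}\ldots\}$ is built from $V_I$-twisted combinations. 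I will rather describe it as $L_+=\ker(I-J_1)$, a rank-two subbundle containing $\sqrt{1+b}\,V_I+\dots$, and $L_-=\ker(I+J_1)$, of rank four.

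The plan is to show three things. First, $L_\pm$ are involutive with $I$-complex leaves. I expect to use that $[V_I,X_1]=0$, that $X_1$ is Killing and holomorphic, and that $d\eta=0$. Second, integrability of $J_1$: since $J_1=\pm I$ on $L_\pm$, a $J_1$-holomorphic distribution is obtained from $T^{1,0}_I L_+\oplus T^{0,1}_I L_-$. Its involutivity reduces to the $I$-holomorphicity of the splitting, namely $\mathcal L_Y I$ preserving the pieces, which I plan to verify through the brackets $[IV_I,\cdot]$, where $d\eta_I=-\tfrac12\rho_K$ enters. Third, the generalized Kähler condition $d^c_{J_1}\omega_{J_1}=-d^c_I\omega_I$. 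For this I will compute with $\omega_{J_1}=\omega_I|_{L_+}-\omega_I|_{L_-}$ and use that the leaves are Kähler, which requires $d\omega_I$ to have no $(3,0)$-type component within a leaf. Finally, $\rho^{B,J_1}=0$ will follow from Claim~\ref{claim:GRS}, once I show $\theta_{J_1}=df+2\eta$ by the Lee-form trace computation of Lemma~\ref{claim:db} run in reverse, using \eqref{eq:BHE-curvature-traces}.

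I expect the main obstacle to be the integrability of $J_1$ together with $H_{J_1}=-H$. These are first-order identities in which the Ricci-potential relation $\rho_K=-dd^c\log F$ must be used exactly as in the proof of Theorem~\ref{thm:transverse_geometry}, through $\iota_{X}\rho_K=2\,db$ and $db=(b+c)df+2(IX)^\flat$, where now $b$ and $X$ are defined by \eqref{eqn:local_converse_b_X}. My first step will be to verify these two identities in the converse setting from Remark~\ref{rmk:kappa}, via $db_c=\tfrac{c}{\kappa}d(R_KF)$ and Proposition~\ref{prop:kahler-identities}\ref{item:kahler-box}. Then I will compute the Nijenhuis tensor of $J_1$ and the torsion $-d^c_{J_1}\omega_{J_1}$ on the frame $\{e_0,\dots,e_5\}$, where every structure coefficient is expressed through $d\eta_I$, $df$, $db$ and $\nabla X$.
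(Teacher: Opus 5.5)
Your outline follows the paper's architecture: BHE of $(I,g)$ via Proposition~\ref{prop:kahler-identities} and Theorem~\ref{thm:BHE-six-to-four}, the algebra of $J_c$ from the normal form plus density, and reduction to $c=1$ through $J_{-1}=-QJ_1Q$. It then uses the splitting $\ker(I\mp J_1)$, integrability via $T^{1,0}_IL_+\oplus T^{0,1}_IL_-$, $H_{J_1}=-H$ from K\"ahler leaves, and $\rho^{B,J_1}=0$ from Claim~\ref{claim:GRS}. However, the step everything rests on is only announced, and the tools you name for it cannot deliver it.

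Write $W_1:=(1+b_1)V_I-IX_1$ and $W_2:=(1+b_1)IV_I+X_1$; these span $L_+=\ker(I-J_1)$. Their metric duals are $(1+b_1)(\eta-\tfrac12 d\log F)$ and $(1+b_1)(\eta_I-\tfrac12 d^c\log F)$. So $L_-=\ker(I+J_1)$ is the common kernel of these two forms, and where $X_1\neq0$ it projects onto all of $\mathcal H$. Hence, there, involutivity of $L_-$ is \emph{equivalent} to $\rho_K+dd^c\log F=0$. It cannot follow from $[V_I,X_1]=0$, holomorphicity of $X_1$ and $d\eta=0$. The contraction $\iota_X\rho_K=2\,db$ alone also leaves $\rho_K(e_4,e_5)$ undetermined. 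Even $L_+$ needs $d\eta_I$: one finds $[W_1,W_2]=\bigl(d\eta_I(IX_1,X_1)-db_1(IX_1)\bigr)IV_I$, which vanishes only by Claim~\ref{claim:db2}.

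The missing idea is the paper's complex one-form $\beta=(\eta-\tfrac12 d\log F)+\sqrt{-1}(\eta_I-\tfrac12 d^c\log F)$. It is closed precisely because $d\eta=0$ and $d\eta_I=-\tfrac12\rho_K=\tfrac12dd^c\log F$, and $L_-=\ker_{\mathbb R}\beta$. Since $\beta(W_1)=2$ and $\beta(W_2)=2\sqrt{-1}$ are constant, $W_1,W_2$ preserve $L_-$. Together with their $I$-holomorphicity (again Claim~\ref{claim:db2}), this settles the mixed brackets. Note that $IV_I$ is not a section of $L_+$ where $X_1\neq0$, so brackets $[IV_I,\cdot]$ are not what controls those terms.

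Finally, the K\"ahler condition on the leaves of $L_-$ is $d\omega_I|_{L_-}=(\tfrac18dR_K\wedge\omega_K+\tfrac14d\log F\wedge\rho_K)|_{L_-}=0$, i.e.\ Proposition~\ref{prop:kahler-identities}\ref{item:potential}. This is exactly what is needed: otherwise $H_{J_1}=-H+2D(I\cdot,I\cdot,I\cdot)$, where $D$ is the part of $d\omega_I$ with all three arguments in $L_-$. Your ``no $(3,0)$-type component within a leaf'' does not identify this condition, since a complex-surface leaf has no $(3,0)$-part. The fallback of computing $N_{J_1}$ and $H_{J_1}$ in a frame is a program, not an argument.

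Your description of the splitting is also wrong in places. Since $IJ_1v=v$ iff $J_1v=-Iv$, your first definition $L_\pm:=\ker(IJ_1\mp1)$ reverses the labels. For $c=1$ the normal form gives $J_1e_4=-e_5=-Ie_4$, so $\mathcal E^\perp\subset\ker(I+J_1)$, which is the rank-four summand. In fact, on $\mathcal U$, $\ker(I-J_1)=\operatorname{span}\{W_1,W_2\}\subset\mathcal E$ and $\ker(I+J_1)=\mathcal E^\perp\oplus\operatorname{span}\{(1-b_1)V_I+IX_1,(1-b_1)IV_I-X_1\}$. So neither $\mathcal E$ nor $\mathcal E^\perp$ is an eigenbundle of $IJ_1$.

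A smaller point: running Lemma~\ref{claim:db} backwards only yields $\theta_{J_1}|_{\mathcal H}=df$. The component $\theta_{J_1}(V_I)=2$ needs a separate check, which works with the trace identity and Claim~\ref{claim:db2}. The paper instead reads $\theta_{J_1}=\theta_I$ directly off the $L_\pm$ splitting.
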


\begin{rmk}\label{rmk:kappa-normalization}
The fact that $\kappa$ is constant is a consequence of Proposition \ref{prop:kahler-identities}. Furthermore, it is strictly positive by Remark \ref{rmk:kappa}. Its value is determined by the data $(g_K,F)$, but its numerical value is not prescribed. Indeed, for every $a>0$, the function $F_a:=aF$ satisfies the same
hypotheses, and its associated constant is $\kappa_a=a \kappa$.
The coefficients $b_c$ and $X_c$, and therefore
the reconstructed structures $(I,J_c,g)$, are unchanged.
Thus $\kappa$ encodes the normalization of $F$,
rather than an additional geometric parameter.
\end{rmk}
 We prove the theorem for $c=1$. For $c=-1$ the result will follow by Proposition \ref{prop:GK-parallel-reflection}. First, we collect some general  properties that will be needed for the proof. All the arguments below are local on the orbifold base. We therefore work throughout in uniformizing charts and on the corresponding lifted $\mathbb{T}^2$-bundles, using the same notation for the orbifold data and their lifts. Since all the constructions are invariant under the local orbifold groups and compatible with changes of charts, the resulting tensors descend to the $\mathbb T^2$-orbibundle $M_+$ over $S_+$.

Set $g^T:=e^fg_K$ and $\omega_I^T:=e^f\omega_K$. 
\medskip
\begin{lem} \label{lem:gradient}
The Hermitian structure $(I,g,f)$ is a BHE  gradient pluriclosed soliton and its Lee form is given by $\theta_I=df+2\eta$.
\end{lem}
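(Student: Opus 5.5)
The lemma follows from the converse part of Theorem~\ref{thm:BHE-six-to-four}, applied in uniformizing charts over the open base $S_+$. The plan is to check that the data of Theorem~\ref{thm:local_converse} satisfy its hypotheses with $\alpha:=0$.

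First, $\alpha=0$ is trivially closed, primitive and of type $(1,1)$. In this case the box equation~\eqref{eq:BHE-four-dimensional-equation} reduces to
\[
\rho_K\wedge\rho_K=\tfrac12\, dd^c_{I^T}R_K\wedge\omega_K .
\]
This is exactly item~\ref{item:kahler-ricci-square} of Proposition~\ref{prop:kahler-identities} with $n=2$. That proposition applies on $S_+$, where $F>0$, $I^T\grad_{g_K}F$ is holomorphic and $\rho_K=-dd^c\log F$. The scalar curvature $R_K$ is positive by hypothesis, so $f=\log(R_K/8)$ is smooth. The bundle data match as well: $d\eta=0=p^*\alpha$ and $d\eta_I=-\tfrac12p^*\rho_K$, and $g$, $\omega_I$, $I$ are defined exactly as in Theorem~\ref{thm:BHE-six-to-four}.

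The integrality hypothesis in Theorem~\ref{thm:BHE-six-to-four} is only needed to produce a principal $\mathbb T^2$-orbibundle with the prescribed curvatures. Here the orbibundle $M_+$ and its connections are given, so we may skip it. Likewise, compactness is only used for the global statement. The local computations behind the converse, namely \cite[Proposition~2.18]{ABLS} together with the reduction formula~\eqref{eq:BHE-Ricci-reduction}, are pointwise. They show the following.
\begin{itemize}
\item $dH=0$, using~\eqref{eq:BHE-anomaly-equation} and the box equation.
\item $\rho^{B,I}=0$. This uses $\operatorname{tr}_{\omega^T}d\eta=0$, $\operatorname{tr}_{\omega^T}d\eta_I=-4$, and the conformal invariance of the Bismut Ricci form in complex dimension two.
\item $\theta_I=2\eta+df$.
\end{itemize}

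The trace identity needs a check: since $\omega^T=e^f\omega_K$ and $e^{f}=R_K/8$,
\[
\operatorname{tr}_{\omega^T}\bigl(-\tfrac12\rho_K\bigr)=-\tfrac12 e^{-f}\,\operatorname{tr}_{\omega_K}\rho_K=-\tfrac12\cdot\tfrac{8}{R_K}\cdot R_K=-4 .
\]
Here we use the convention that $\operatorname{tr}_{\omega}$ sums over a full orthonormal frame, so that $\operatorname{tr}_{\omega_K}\rho_K=R_K$.

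It remains to verify the gradient soliton equations~\eqref{eq:-pluriclosed-soliton} with potential $f$. Set $V_I=\tfrac12(\theta_I^\sharp-\grad f)$, which is the vertical field dual to $\eta$. On compact manifolds, \cite[Proposition~2.6]{GFJS} is the tool for this, but $M_+$ is not compact. Instead we use the local equivalence valid for pluriclosed structures: $\rho^{B,I}=0$ together with $\theta_I-df$ being dual to a Killing, $\nabla^{B,I}$-parallel, $I$-holomorphic field implies the soliton system. This is the identity $\rho^{B,I}\circ I=\nabla^{B,I}(\theta_I-df)$ up to soliton terms, as in \cite[Proposition~2.17]{Lee1}, the same fact used in Claim~\ref{claim:GRS}.

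In our setting $\theta_I-df=2\eta$, and $\eta$ is dual to the unit vertical field $V_I$. Since $d\eta=0$ and the metric is bundle-like and invariant, $V_I$ is Levi-Civita parallel. Moreover $\iota_{V_I}H=0$ by~\eqref{eqn:torsionH} with $d\eta=0$, so $V_I$ is also $\nabla^{B,I}$-parallel.

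The main obstacle is this last step: deriving the soliton equations locally, without compactness. I would establish it by direct computation. For $\Ric^{\nabla}-\tfrac14H^2+\nabla^2 f=0$, decompose~\eqref{eqn:Ricci_tensor} using the symmetric part of the Bismut Ricci tensor, $\Ric^{B}=\Ric^\nabla-\tfrac14H^2-\tfrac12d^*H$. Then combine $\rho^{B,I}=0$ with the Lee-form relation for pluriclosed metrics, which expresses the $I$-anti-invariant part of $\Ric^B$ through $\nabla\theta$. Finally use $\nabla\eta=0$, so that $\nabla\theta_I=\nabla^2 f$. For the equation $d^*H+\iota_{\grad f}H=0$, compute the skew part $-\tfrac12 d^*H$ of $\Ric^B$, which equals $\tfrac12 d\theta_I$-type terms plus $\iota_{\theta^\sharp}H$ contributions. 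With $d\theta_I=0$ and $\iota_{V_I}H=0$, this reduces to $\iota_{\grad f}H$.
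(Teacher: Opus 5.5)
Your proposal is correct and follows essentially the paper's argument. The converse part of Theorem~\ref{thm:BHE-six-to-four} with $\alpha=0$ (via item~\ref{item:kahler-ricci-square} of Proposition~\ref{prop:kahler-identities}) gives the BHE property and $\theta_I=df+2\eta$; then $\nabla V_I=0$ and $\iota_{V_I}H=0$ give $\nabla^{B,I}\eta=0$, so $\mathrm{Rc}^{B,I}+\nabla^{B,I}\theta_I=0$ reduces to $\mathrm{Rc}^{B,I}+\nabla^{B,I}df=0$, whose symmetric and skew parts are exactly the two soliton equations. The only difference is in that last step: where you plan a direct computation, the paper simply cites the general pluriclosed identity $\mathrm{Rc}^{B,I}+\nabla^{B,I}\theta_I=0$ for $\rho^{B,I}=0$ from \cite[Proposition~8.14]{GFStr}; this is cleaner than the form of \cite[Proposition~2.17]{Lee1} used in Claim~\ref{claim:GRS}, which would be circular here because it presupposes the soliton.
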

\begin{proof}
By item \ref{item:kahler-ricci-square} in Proposition \ref{prop:kahler-identities}, the K\"ahler orbifold surface $(S_+,I^T,g_K,\omega_K)$ satisfies the hypotheses of Theorem \ref{thm:BHE-six-to-four} with $\alpha^T=0$. Consequently, the Hermitian structure $(I,g)$ on the principal $\mathbb{T}^2$-orbibundle $M_+$ is BHE, with Lee form $\theta_I=df+2\eta$.  The Bismut torsion is given by $H=\eta_I\wedge d\eta_I+H^T$, where $H^T=-d^c_{I^{T}}\omega_I^T$, and hence $\iota_{V_I}H=0$. Since $\nabla V_I=0$ by construction, it follows that $V_I$ is also $\nabla^{B,I}$-parallel. Equivalently, $\nabla^{B,I}\eta=0$. The BHE identity $\mathrm{Rc}^{B,I}+\nabla^{B,I}\theta_I=0$ (\cite[Proposition 8.14]{GFStr}) therefore reduces to $\mathrm{Rc}^{B,I}+\nabla^{B,I}df=0$, showing that $(I,g,f)$ is a  gradient pluriclosed soliton.
\end{proof}
\begin{claim} \label{claim:positivekappa}
It holds that $|X_c|^2=1-b_c^2$.
\end{claim}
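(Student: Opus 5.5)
Since $X_c=-\frac{4c}{\kappa}I^T\grad_{g_K}F$ is horizontal, its $g$-norm is computed with the transverse metric $g^T=e^fg_K$. The plan is to express $|X_c|^2_g$ through $|dF|^2_{g_K}$, then use the first integral \eqref{eqn:local_converse_first_integrals} of Remark~\ref{rmk:kappa}, and finally compare with $1-b_c^2$ using the explicit formula for $b_c$.

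The first step is the conformal rescaling. Because $I^T$ is $g_K$-orthogonal, $|I^T\grad_{g_K}F|_{g_K}^2=|dF|_{g_K}^2$. Since $g=e^fg_K$ on $\mathcal H$, this gives
\[
|X_c|_g^2=e^f\,\frac{16}{\kappa^2}\,|dF|_{g_K}^2 .
\]
Here $c^2=1$ removes the sign, and I use $e^f=R_K/8$ from \eqref{eqn:local_converse_metric}.

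Next, I invoke \eqref{eqn:local_converse_first_integrals}, namely $|dF|^2_{g_K}=\kappa F-\tfrac12R_KF^2$. This holds on $S_+$, which is all of the base here since $F>0$. Combining the two displays yields
\[
|X_c|^2=\frac{2R_K}{\kappa^2}\Bigl(\kappa F-\tfrac12R_KF^2\Bigr)=\frac{2R_KF}{\kappa}-\frac{R_K^2F^2}{\kappa^2}.
\]

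On the other side, set $t:=R_KF/\kappa$, so that $b_c=c(t-1)$. Then
\[
1-b_c^2=1-(t-1)^2=2t-t^2,
\]
which coincides with the expression just obtained for $|X_c|^2$.

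The only point needing care is the normalization bookkeeping. One must check that the horizontal lift of $I^T\grad_{g_K}F$ is measured with $g^T=e^fg_K$, so that the factor $e^f$ appears rather than $e^{-f}$. One must also check that the constant $\kappa$ of Remark~\ref{rmk:kappa} is defined with the same conventions, $\Delta=-d^*d$ and $R$ the scalar curvature of $g_K$, as used here. With $e^f=R_K/8$ these factors combine exactly into $16/8=2$, and no further obstacle is expected. As a byproduct, $b_c\in[-1,1]$ follows.
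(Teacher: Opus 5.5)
Your proposal is correct and follows the paper's proof. Both arguments use the conformal rescaling $g^T=e^fg_K$ with $e^f=R_K/8$, then the first integral $|dF|^2_{g_K}=\kappa F-\tfrac12R_KF^2$ from \eqref{eqn:local_converse_first_integrals}, and finally the algebraic comparison with $b_c=c\bigl(R_KF/\kappa-1\bigr)$, which the paper compresses into a single sentence.
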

\begin{proof}
Combining the definitions of $b_c$ and $X_c$ with $g^T=e^fg_K$,  Equation \eqref{eqn:local_converse_first_integrals} and $e^f=\frac{R_K}{8}$, we obtain $|X_c|^2=1-b_c^2$. \end{proof}

\begin{claim}\label{claim:db2}
It holds that $db_c=\tfrac12\iota_{X_{c}}\rho_K=-\iota_{X_{c}} d\eta_I$.
\end{claim}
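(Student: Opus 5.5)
The claim reduces to a direct computation on the base $S_+$. Afterwards we pull back by $p$, using that $b_c$ is basic and $X_c$ is the horizontal lift of a vector field on $S_+$. I would first note that the second equality is immediate from the hypothesis $d\eta_I=-\tfrac12 p^*\rho_K$: contracting with the horizontal vector field $X_c$ gives $-\iota_{X_c}d\eta_I=\tfrac12\iota_{X_c}\rho_K$. So the content is the identity $db_c=\tfrac12\iota_{X_c}\rho_K$.

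For this, differentiate the definition $b_c=c\bigl(\tfrac{R_KF}{\kappa}-1\bigr)$. Since $\kappa$ is constant (Proposition~\ref{prop:kahler-identities}\ref{item:kahler-box} and Remark~\ref{rmk:kappa}),
\[
db_c=\frac{c}{\kappa}\,d(R_KF)=-\frac{c}{\kappa}\,d(\Delta_{g_K}F),
\]
again by item~\ref{item:kahler-box}. On the other side, by the definition of $X_c$ and item~\ref{item:kahler-rho-hamiltonian},
\[
\tfrac12\iota_{X_c}\rho_K=-\frac{2c}{\kappa}\,\iota_{(I^T\grad_{g_K}F)}\rho_K=-\frac{2c}{\kappa}\cdot\frac12\,d(\Delta_{g_K}F)=-\frac{c}{\kappa}\,d(\Delta_{g_K}F).
\]
The two expressions agree, which proves the claim on $S_+$, and hence on $M_+$ after pulling back.

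I do not expect a real obstacle here. The only points requiring care are the consistency of sign conventions, namely $\iota_{I^T\grad F}\omega_K=-dF$ and $\rho(Y,Z)=\mathrm{Rc}(I^TY,Z)$, which were already fixed in the proof of Proposition~\ref{prop:kahler-identities}, and the fact that $\rho_K$ in the statement means $p^*\rho_K$. For the latter, note that contraction of a pulled-back basic form with a horizontal lift equals the pullback of the contraction with the projected field.
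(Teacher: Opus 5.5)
Your proof is correct and is essentially the paper's computation. The paper writes $\iota_{I^T\grad F}\rho_K=d(|dF|^2/F)$ via \eqref{eqn:contraction} and then uses the first integral \eqref{eqn:local_converse_first_integrals} to reach $-\tfrac12 d(R_KF)$, whereas you reach the same expression through items (a) and (c) of Proposition~\ref{prop:kahler-identities}, and since those hold on all of $S$ your version never divides by $F$.
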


\begin{proof}
Combining \eqref{eqn:contraction}, \eqref{eqn:local_converse_b_X}, and \eqref{eqn:local_converse_first_integrals}, we obtain
\[
-\frac{\kappa}{4c}\iota_{X_{c}}\rho_K
=d\left(\frac{|dF|_{g_K}^2}{F}\right)
=-\frac12d(R_KF)
=-\frac{\kappa c}{2}db_c,
\]
where in the last equality we used $b_c=c\big(\frac{R_KF}{\kappa}-1\big)$.

\end{proof}
\begin{lem} \label{lem:Udense}
The open set $\mathcal U:=\{p\in M_+\mid X_c (p)\neq0\}$
is dense on $M_+$.
\end{lem}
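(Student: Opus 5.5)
Since $X_c=-\frac{4c}{\kappa}I^T\grad_{g_K}F$ is the horizontal lift of a basic vector field, the set $\mathcal U$ is $p^{-1}$ of the set $\{q\in S_+\mid dF(q)\neq0\}$. The map $p:M_+\to S_+$ is open and surjective, so it suffices to show that $\{dF\neq0\}$ is dense in $S_+$. Preimages of dense open subsets under an open submersion (or orbibundle projection) are dense, since every nonempty open set of $M_+$ projects onto a nonempty open set of $S_+$.

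Suppose $\{dF\neq 0\}$ is not dense. Then there is a nonempty open set $W\subset S_+$ on which $dF\equiv0$, so the real holomorphic vector field $I^T\grad_{g_K}F$ vanishes on $W$.

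\textbf{Route (a), unique continuation.} Since $I^T\grad_{g_K}F$ is holomorphic and Killing, as shown in the proof of Proposition~\ref{prop:kahler-identities}, and vanishes on an open set, it vanishes on the whole connected orbifold $S_+$. Unique continuation for holomorphic vector fields can be applied in uniformizing charts, or one can argue via Killing fields: a Killing field whose $1$-jet vanishes at a point is identically zero on a connected manifold. Thus $F$ is a positive constant on $S_+$. Then $\rho_K=-dd^c\log F=0$, so $R_K=0$, contradicting $R_K>0$.

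\textbf{Route (b), first integrals.} This avoids the global continuation and works directly on $W$. By \eqref{eqn:local_converse_first_integrals}, $|dF|^2=\kappa F-\tfrac12 R_KF^2$, so on $W$ we get $R_KF=2\kappa$. Since $F$ is locally constant on $W$, the scalar curvature $R_K$ is constant there. But also $\Delta F=\kappa-R_KF=0$ on $W$, forcing $R_KF=\kappa$. Hence $\kappa=2\kappa$, so $\kappa=0$, contradicting $\kappa>0$ (Remark~\ref{rmk:kappa}).

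Route (b) is purely local and cleanly yields the contradiction, so I will use it. The only point needing care is the density transfer from $S_+$ to $M_+$ in the orbifold setting. This follows because $X_c$ is $\mathbb T^2$-invariant, so $\mathcal U$ is a union of fibers, and in uniformizing charts the lifted projection is a submersion.
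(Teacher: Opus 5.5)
Your proposal is correct. Your Route (a) is essentially the paper's proof. The paper pushes an open set on which $X_c$ vanishes down to an open subset of $S_+$ via the open map $p$. It then applies holomorphic unique continuation on the connected orbifold $S_+$ to get $dF\equiv0$, and concludes $\rho_K=-dd^c\log F=0$, contradicting $R_K>0$.

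Route (b), the one you actually adopt, is genuinely different: it never leaves the open set $W$ and needs no continuation argument. As written, it borrows the constancy and positivity of $\kappa$ from Proposition~\ref{prop:kahler-identities} and Remark~\ref{rmk:kappa}, and the constancy is proved there using that $I^T\grad_{g_K}F$ is holomorphic and Killing. In fact you only need the pointwise trace identity $|dF|^2_{g_K}/F=\Delta_{g_K}F+\tfrac12R_KF$, i.e.\ $R_K=-2\Delta_{g_K}\log F$. On $W$ both $|dF|^2_{g_K}$ and $\Delta_{g_K}F$ vanish, so $R_KF=0$ there. More directly still, $F$ is locally constant and positive on $W$, so $\rho_K=-dd^c\log F=0$ on $W$, contradicting $R_K>0$.

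So your route shows the lemma uses neither the holomorphicity of $X_c$ nor the connectedness of $S_+$: only the Ricci-potential equation and $R_K>0$. The paper's route spends the analytic input (unique continuation for holomorphic vector fields, applied in uniformizing charts). In exchange it says slightly more: $\{dF=0\}$ is the zero set of a not identically vanishing holomorphic field on the connected $S_+$, hence nowhere dense for general reasons. Your transfer of density from $S_+$ to $M_+$, via $\mathcal U=p^{-1}(\{dF\neq0\})$ and openness of $p$, is the same as the paper's.
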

\begin{proof}
Suppose, by contradiction, that $\mathcal U$ is not dense. Then there exists a nonempty open subset $\mathcal V\subset M_+\setminus\overline{\mathcal U}$ on which $X_c$ vanishes identically. Since the bundle projection $p$ is open, the holomorphic vector field $X_c$ vanishes on the nonempty open subset $p(\mathcal V)\subset S_+$. By holomorphic unique continuation, $X_c$ vanishes identically on $S_+$. This implies $dF=0$. Hence $F$ is positive constant and
\[
\rho_K=-dd^c\log F=0,
\]
contradicting $R_K>0$. Therefore $\mathcal U$ is dense in $M_+$.
\end{proof}
\begin{lem}\label{lem:J_almost}
The endomorphism $J_1$ defined in \eqref{eqn:local_converse_J} is smooth, $g$-orthogonal and satisfies $J_1^2=-\operatorname{Id}$. 
\end{lem}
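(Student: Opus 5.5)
Smoothness is immediate. On $M_+$ we have $1+b_1=R_KF/\kappa>0$. The quantities $b_1$, $X_1$, $R_K$ and $F$ are smooth basic objects pulled back from $S_+$, so every coefficient in \eqref{eqn:local_converse_J} is smooth.

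For orthogonality and $J_1^2=-\operatorname{Id}$ I would not verify the identities by brute force on $TM_+$. Instead I would recognise $J_1$ as the normal form \eqref{eqn:matrix_J} in a suitable frame.

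On the dense open set $\mathcal U$ of Lemma~\ref{lem:Udense}, put $s:=|X_1|$. Claim~\ref{claim:positivekappa} gives $s=\sqrt{1-b_1^2}>0$ there. Take the frame $e_0=V_I$, $e_1=IV_I$, $e_2=X_1/s$, $e_3=Ie_2$, and complete it locally by a unit horizontal $e_4\perp\{e_2,e_3\}$ with $e_5=Ie_4$. This is an $I$-unitary orthonormal frame, because $I$ preserves $\mathcal H$ and $g$ is $I$-Hermitian.

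Next I would evaluate \eqref{eqn:local_converse_J} on this frame, with $c=1$.
\begin{itemize}
\item $J_1e_0=b_1e_1+se_2$ and $J_1e_1=-b_1e_0+se_3$.
\item For $Y=e_2$: $g(X_1,e_2)=s$ and $g(IX_1,e_2)=0$. The bracket contributes $\frac{s}{1+b_1}\,IX_1=\frac{s^2}{1+b_1}e_3=(1-b_1)e_3$. Adding $-Ie_2=-e_3$ gives $J_1e_2=-se_0-b_1e_3$.
\item Similarly $J_1e_3=-se_1+b_1e_2$, using $1-b_1^2=s^2$ again.
\item For $e_4,e_5$ all inner products with $X_1,IX_1$ vanish, so $J_1e_4=-e_5$ and $J_1e_5=e_4$.
\end{itemize}
These match \eqref{eqn:J_normal_form} with $c=1$. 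The resulting matrix is block diagonal, with a $4\times4$ block on $\operatorname{span}\{e_0,\dots,e_3\}$ and the block $\begin{pmatrix}0&1\\-1&0\end{pmatrix}$ on $\operatorname{span}\{e_4,e_5\}$.

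The $4\times4$ block is skew-symmetric, which gives orthogonality. To get $J_1^2=-\operatorname{Id}$, I would check directly that the block squares to $-\operatorname{Id}$. For instance, $J_1^2e_0=b_1(-b_1e_0+se_3)+s(-se_0-b_1e_3)=-(b_1^2+s^2)e_0=-e_0$, and the other three basis vectors are analogous. So on $\mathcal U$ the identities $g(J_1\cdot,J_1\cdot)=g$ and $J_1^2=-\operatorname{Id}$ hold.

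Both identities involve only smooth tensors on $M_+$, so they extend to all of $M_+$ by density of $\mathcal U$ and continuity.

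The one step that needs care is the bracket term in $J_1e_2$ and $J_1e_3$. There the factor $1/(1+b_1)$ must combine with $s^2=(1-b_1)(1+b_1)$ to produce exactly the coefficients $\mp b_1$. This cancellation is where Claim~\ref{claim:positivekappa} is used essentially; everything else is routine bookkeeping of signs with $I^2=-\operatorname{Id}$.
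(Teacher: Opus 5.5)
Your proposal is correct and follows the same route as the paper. You get smoothness from $1/(1+b_1)=\kappa/(R_KF)$ on $S_+$, identify $J_1$ with the normal form \eqref{eqn:matrix_J} for $c=1$ in the adapted frame on the dense set $\mathcal U$, and extend to $M_+$ by continuity. You also carry out explicitly the frame computation that the paper only asserts, including the cancellation $s^2/(1+b_1)=1-b_1$.

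One small point of logic: skew-symmetry of the matrix gives $g$-orthogonality only together with $J_1^2=-\operatorname{Id}$, since the adjoint satisfies $J_1^{*}J_1=-J_1^2$. You verify both, so the argument is complete.
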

\begin{proof}
The endomorphism $J_1$ is globally well defined: indeed, the only potentially singular coefficient in its definition is $\frac{1}{(1+b_1)}$. However, $\tfrac{1}{1+b_1}=\frac {\kappa} {R_KF}>0$ (Remark \ref{rmk:kappa}), and hence it is smooth on $S_+$. Thus $J_1$ is smooth on $M_+$.

On the open set $\mathcal{U}=\{X_1\neq0\}\subset M_+$, set $s=|X_1|$ and choose the $I$-unitary orthonormal frame $\{e_0,\ldots,e_3\}$ defined in Equation \eqref{eqn:basis}, completed by $e_4$ and $e_5=Ie_4$. With respect to this frame, $J_1$ has the normal form \eqref{eqn:matrix_J}; hence it is $g$-orthogonal and satisfies $J_1^2=-\operatorname{Id}$. Since $\mathcal{U}$ is dense (Lemma \ref{lem:Udense}) and $J_1$ is smooth on $M_+$, these properties extend globally on $M_+$.
\end{proof}
We have therefore proved that $J_1$ is a smooth $g$-compatible almost complex structure on $M_+$. To complete the proof, it remains to verify that $N_{J_1}=0$, that $H_{J_{1}}=-H$, and that $\rho^{B,J_{1}}=0$.
These proofs rely on the following  Lemmas.
\begin{lem} \label{lem:splittingL}
Set $c=1$. The tangent bundle $TM_+$ admits an orthogonal $I$ and $J_{1}$-invariant splitting $TM_+=L_+\oplus L_-$ where $L_+=\operatorname{Ker}(I-J_1)$ and $L_-=\operatorname{Ker}(I+J_1)$.
Moreover, $L_+$ and $L_-$ have real ranks $2$ and $4$, respectively, both distributions are involutive with K\"ahler leaves, and
\[
[(1+b_1)V_I-IX_1,\Gamma(L_-)]\subseteq\Gamma(L_-),
\qquad
[(1+b_1)IV_I+X_1,\Gamma(L_-)]\subseteq\Gamma(L_-).
\]  
Furthermore $(1+b_1)V_I-IX_1, \, (1+b_1)IV_I+X_1$ are $I$-holomorphic.
\end{lem}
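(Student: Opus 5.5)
Setting $b:=b_1$, $X:=X_1$, I will first identify the two bundles explicitly from the definition \eqref{eqn:local_converse_J} and then check the bracket, holomorphy and Kähler properties.

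\textbf{Identifying $L_\pm$.} The explicit candidate for $L_+$ is
\[
L_+=\operatorname{span}\{\,(1+b)V_I-IX,\;(1+b)IV_I+X\,\}.
\]
A direct check with \eqref{eqn:local_converse_J} should give $J_1(A)=IA$ for $A=(1+b)V_I-IX$, using $|X|^2=1-b^2$ (Claim~\ref{claim:positivekappa}). The second generator is $-IA$, so $L_+$ is $I$-invariant. Two consequences follow.
\begin{itemize}
\item The vector $A$ never vanishes on $M_+$, since $1+b=R_KF/\kappa>0$. Hence $L_+$ is a smooth rank-$2$ bundle.
\item $L_-$ is the orthogonal complement of $L_+$, namely
\[
L_-=\operatorname{span}\{\,X+(1-b)V_I,\;IX+(1-b)IV_I\,\}\oplus(\mathcal H\cap\{X,IX\}^\perp),
\]
where the first vector is corrected so as to be orthogonal to $L_+$.
\end{itemize}
Since $I$ and $J_1$ are orthogonal and commute on this splitting (compare the normal form \eqref{eqn:matrix_J} with $c=1$, where $[I,J]=0$), we get $TM_+=\ker(I-J_1)\oplus\ker(I+J_1)$ orthogonally, with ranks $2$ and $4$. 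On $\mathcal U$ this is immediate from the frame; the identities $J_1=\pm I$ on $L_\pm$ then extend by density (Lemma~\ref{lem:Udense}).

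\textbf{Commutation of the generators.} Set $A:=(1+b)V_I-IX$ and $B:=(1+b)IV_I+X$. I will show $[A,B]=0$, which makes $L_+$ involutive. The ingredients are:
\begin{itemize}
\item $V_I$ and $IV_I$ commute with the horizontal lifts of the holomorphic Killing fields $X$ and $IX$ (these are basic lifts, and $d\eta=0$);
\item $X(b)=IX(b)=0$ and $[X,IX]=0$ downstairs;
\item the vertical part of $[X,IX]^{\mathcal H}$ is $-d\eta_I(X,IX)\,IV_I$;
\item $d(1+b)=-\iota_Xd\eta_I$ (Claim~\ref{claim:db2}).
\end{itemize}
I expect the terms to cancel, precisely because $b$ was chosen as a moment-map-type potential.

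\textbf{$I$-holomorphy of $A$ and $B$.} Write $A=(1+b)V_I-IX$. The field $V_I$ is $I$-holomorphic and $IX$ is the horizontal lift of a holomorphic field. The failure of the lift of $IX$ to be holomorphic is measured by $\iota_{IX}d\eta_I$, and this is compensated by the derivative of the coefficient $(1+b)$ via Claim~\ref{claim:db2} and the $I$-invariance of $d\eta_I$. The same argument applies to $B=-IA$.

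\textbf{Brackets with $L_-$ and involutivity of $L_-$.} Since $\mathcal L_AI=0$, and $A$ is Killing (it is a combination of Killing fields with coefficient constant along the relevant directions; to be checked), the flow of $A$ preserves $g$ and $I$. It also preserves $L_+$, because $[A,B]=0$. Hence it preserves $L_-=L_+^\perp$, which gives $[A,\Gamma(L_-)]\subseteq\Gamma(L_-)$, and likewise for $B$.

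For the involutivity of $L_-$, I would argue as follows. $L_-$ is $I$-invariant and complementary to the holomorphic rank-2 foliation $L_+$. I will compute $g([Y_1,Y_2],A)$ for $Y_i\in L_-$ using $dA^\flat$. It suffices to show that $dA^\flat$ and $dB^\flat$ vanish on $L_-\times L_-$; this reduces to $d\eta_I$ restricted to $\{X,IX\}^\perp$ being balanced against $dX^\flat$ computed from the transverse Hessian of $F$. \emph{I expect this to be the main obstacle}: it must use $\rho_K=-dd^c\log F$ in the form \eqref{eqn:potential}/\eqref{eqn:contraction}. I would first try verifying it on $\mathcal U$ in the frame $e_0,\dots,e_5$ and then extend by density.

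\textbf{Kähler leaves.} Each leaf of $L_\pm$ carries the restriction of $(g,I)$. For $L_+$ the leaves are complex curves, so they are automatically Kähler. For $L_-$ I need $d\omega_I|_{L_-}=0$, which is equivalent to $H(Y_1,Y_2,Y_3)=0$ for $Y_i\in L_-$ together with involutivity. I would check this using $H=\eta_I\wedge d\eta_I+H^T$ and $H^T=Idf\wedge\omega_I^T$, noting that $df$ is proportional to $dR_K$ and is related to $dF$ through \eqref{eqn:local_converse_first_integrals}. Again the computation is done on $\mathcal U$ and extended by density.
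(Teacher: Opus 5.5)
Your treatment of $L_+$ is in substance the paper's: $J_1A=IA$ for $A=(1+b_1)V_I-IX_1$, the rank is $2$ because $1+b_1=R_KF/\kappa>0$, and $L_-=L_+^\perp=\ker(I+J_1)$. The $I$-holomorphy of $A$ comes from cancelling the vertical part of $\mathcal L_{IX_1}I$ against $db_1$ via Claim~\ref{claim:db2}.

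There are a few small slips:
\begin{itemize}
\item $B=IA$, not $-IA$.
\item The vectors you list for $L_-$ are not orthogonal to $L_+$; for instance $g(X_1+(1-b_1)V_I,A)=1-b_1^2$. The correct ones are $(1-b_1)V_I+IX_1$ and $(1-b_1)IV_I-X_1$.
\item $IX_1(b_1)=-d\eta_I(X_1,IX_1)$ is not zero in general. It is precisely what cancels the vertical part of $[IX_1,X_1]$ in $[A,B]$.
\end{itemize}

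\textbf{The genuine gap is everything concerning $L_-$.}
\begin{itemize}
\item Your proof of $[A,\Gamma(L_-)]\subseteq\Gamma(L_-)$ requires $A$ to be Killing, and it is not. Here $IX_1=\frac4\kappa\grad_{g_K}F$ is a gradient field, and for a basic horizontal lift $Y$ one gets $(\mathcal L_Ag)(V_I,Y)=db_1(Y)$, which does not vanish in general. Without the Killing property, preserving $L_+$ does not imply preserving $L_+^\perp$.
\item The involutivity of $L_-$ is left as an undone computation.
\item For $d\omega_I|_{L_-}=0$, the scalar first integrals you cite are not enough. You need the $3$-form identity $d\log F\wedge\rho_K=-\frac12dR_K\wedge\omega_K$, which is item~\ref{item:potential} in Proposition~\ref{prop:kahler-identities}.
\end{itemize}

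\textbf{The missing idea} is that the metric duals of $A$ and $B$ are positive multiples of closed forms. Since $1+b_1=8e^fF/\kappa$, $(IX_1)^\flat=\frac4\kappa e^fdF$ and $X_1^\flat=-\frac4\kappa e^fd^cF$, one has
\[
A^\flat=(1+b_1)\bigl(\eta-\tfrac12d\log F\bigr),\qquad B^\flat=(1+b_1)\bigl(\eta_I-\tfrac12d^c\log F\bigr).
\]
The forms in brackets are closed because $d\eta=0$ and $d\eta_I=-\frac12\rho_K=\frac12dd^c\log F$. Thus $\beta=\eta-\frac12d\log F+\sqrt{-1}(\eta_I-\frac12d^c\log F)$ is a closed complex $1$-form with $\ker_{\mathbb R}\beta=L_-$. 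All three claims about $L_-$ then follow at once:
\begin{itemize}
\item $L_-$ is the common kernel of two closed $1$-forms, hence involutive. In your formulation, $dA^\flat=db_1\wedge(\eta-\frac12d\log F)$ vanishes on $L_-\times L_-$.
\item $\beta(A)=2$ and $\beta(B)=2\sqrt{-1}$ are constant, so $\mathcal L_A\beta=\mathcal L_B\beta=0$, and $A$, $B$ preserve $L_-$.
\item Since $\eta=\frac12d\log F$ on $L_-$, we get $d\omega_I|_{L_-}=\bigl(\frac18dR_K\wedge\omega_K+\frac14d\log F\wedge\rho_K\bigr)|_{L_-}$, which is $0$ by the identity above.
\end{itemize}
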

\begin{proof}
Define
\begin{equation} \label{eqn:L_+}
    L_+:=\operatorname{span}\{(1+b_1)V_I-IX_1,(1+b_1)IV_I+X_1\}.
\end{equation}
Since $1+b_1=\frac{R_KF}{\kappa}>0$, the subbundle $L_+$ has rank $2$ everywhere. A direct computation using $|X_1|^2=1-b_1^2$ and Equation \eqref{eqn:local_converse_J} gives $I=J_1$ on $L_+$, and therefore $I L_+=J_{1}L_+=L_+$. A further direct computation using \eqref{eqn:local_converse_J} shows that $\operatorname{Im}(I+J_1)=L_+$. Since $I+J_1$ is a skew-symmetric endomorphism,
\[
\ker(I+J_1)
=\operatorname{Im}(I+J_1)^\perp
=L_+^\perp.
\]
Define $L_-:=\ker(I+J_1)$. Then
\[
TM_+=L_+\oplus L_-,
\]
with $I=-J_1$ on $L_-$. Since $\ker(I-J_1)$ is orthogonal to $\ker(I+J_1)=L_-$, we have
\[
\ker(I-J_1)\subseteq L_-^\perp=L_+.
\]
Together with $L_+\subseteq\ker(I-J_1)$, this gives
\[
L_+=\ker(I-J_1).
\] 
In particular, both subbundles are invariant under $I$ and $J_1$. 

We first verify that $L_+$ is involutive by showing that $(1+b_1)V_I-IX_1$ and hence $(1+b_1)IV_I+X_1$ are $I$-holomorphic. 
By construction, $b_1$, $I$, $I^T$ are invariant under both $V_I$ and $IV_I$. Hence $X_1$ and $IX_1$ commute with $V_I$ and $IV_I$, and therefore
\[
(\mathcal L_{(1+b_1)V_I-IX_1})(V_I)=(\mathcal L_{(1+b_1)V_I-IX_1})(IV_I)=0.
\]
For any horizontal vector field $Y$, using $\mathcal L_{V_I}I=0$, we compute
\[
(\mathcal L_{(1+b_1)V_I-IX_1} I)(Y)
=-db_1(IY)V_I+db_1(Y)IV_I-[IX_1,IY]+I[IX_1,Y].
\]
Since $d\eta=0$, taking the scalar product with $V_I$, gives
\[
\begin{aligned}
g\bigl((\mathcal L_{(1+b_1)V_I-IX_1} I)(Y),V_I\bigr)
&=-db_1(IY)-g([IX_1,Y],IV_I)\\
&=-db_1(IY)+d\eta_I(IX_1,Y)\\
&=-db_1(IY)-d\eta_I(X_1,IY)=0
\end{aligned}
\]
where the last equality follows from Claim \ref{claim:db2}. Similarly, $g\bigl((\mathcal L_{(1+b_1)V_I-IX_1} I)(Y),IV_I\bigr)=0$.
Finally, the horizontal component of $(\mathcal L_{(1+b_1)V_I-IX_1} I)(Y)$ vanishes because $IX_1$ is transversely $I^T$-holomorphic. Indeed, $X_1$ is transversely $I^T$-holomorphic by hypothesis, and the integrability of $I^T$ implies the same property for $IX_1$. We conclude that $\mathcal L_{(1+b_1)V_I-IX_1} I=0.$ Using that $L_+$ is $I$-invariant, it has real rank two and its generators are $I$-holomorphic, it follows that $L_+$ is involutive. Since its leaves are complex curves they are in particular K\"ahler.\par
We now turn to the distribution $L_-$. Define the complex-valued one-form
\[
\beta:=\eta-\tfrac12d\log F+\sqrt{-1}\left(\eta_I-\tfrac12d^c\log F\right).
\]
Both its real and imaginary parts are closed, thus $d\beta=0$. Moreover,
\[
((1+b_1)V_I-IX_1)^\flat=(1+b_1)\left(\eta-\tfrac12d\log F\right),
\qquad
((1+b_1) IV_I+X_1)^\flat=(1+b_1)\left(\eta_I-\tfrac12d^c\log F\right).
\]
Since $1+b_1>0$ (indeed, $1+b_1=\frac{R_KF}{\kappa}$ and $\kappa$ is positive by Remark \ref{rmk:kappa}), it follows that
\[
\begin{aligned}
\ker_{\mathbb R}\beta
&=\ker\left(\eta-\tfrac12d\log F\right)
\cap\ker\left(\eta_I-\tfrac12d^c\log F\right)=L_+^\perp=L_-.
\end{aligned}
\]
Since $L_-$ is the common kernel of two closed real one-forms, it is involutive. It remains to show that its leaves are K\"ahler. Since $\beta$ vanishes on $L_-$, we have $\left.\eta\right|_{L_-}
=\left.\tfrac12d\log F\right|_{L_-}$. Therefore,
\[
\begin{aligned}
\left.d\omega_I\right|_{L_-}
&=\left.\left(e^fdf\wedge\omega_K-\eta\wedge d\eta_I\right)\right|_{L_-}\\
&=\left.\left(\tfrac18dR_K\wedge\omega_K
+\tfrac14d\log F\wedge\rho_K\right)\right|_{L_-}
=0,
\end{aligned}
\]
where the last equality follows from item \ref{item:potential} in Proposition \ref{prop:kahler-identities}. Hence the restriction of $\omega_I$ to each leaf of $L_-$ is closed, and the leaves are K\"ahler.\par
We now verify the final assertion. It suffices to prove that $\beta([(1+b_1)V_I-IX_1,L_-])=\beta([(1+b_1)IV_I+X_1,L_-])=0$. This follows immediately from the facts that $d\beta=0$, $\beta(L_-)=0$, and both $\beta((1+b_1)V_I-IX_1)$ and $\beta((1+b_1)IV_I+X_1)$ are constant.
\end{proof}
\begin{lem} \label{lem:J1_integrable}
The complex structure $J_1$ is integrable.
\end{lem}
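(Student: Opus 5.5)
We will show that the $\sqrt{-1}$-eigenbundle $T^{1,0}_{J_1}$ is involutive. Lemma~\ref{lem:splittingL} gives $TM_+=L_+\oplus L_-$ with $J_1=I$ on $L_+$ and $J_1=-I$ on $L_-$, so
\[
T^{1,0}_{J_1}=L_+^{1,0}\oplus L_-^{0,1},
\]
where the types are taken with respect to $I$. We will check involutivity on each kind of pair of sections separately.

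First, $L_+$ is involutive and $I$-invariant, and $I$ is integrable. Hence $[L_+^{1,0},L_+^{1,0}]\subseteq T^{1,0}_I\cap (L_+\otimes\mathbb C)=L_+^{1,0}$. Similarly, $L_-$ is involutive and $I$-invariant, so $[L_-^{0,1},L_-^{0,1}]\subseteq T^{0,1}_I\cap(L_-\otimes\mathbb C)=L_-^{0,1}$.

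The mixed brackets are the main point. Set
\[
A:=(1+b_1)V_I-IX_1,\qquad B:=(1+b_1)IV_I+X_1,
\]
so that $IA=B$. Then $L_+^{1,0}$ is spanned over $C^\infty(M_+,\mathbb C)$ by the single field $Z:=A-\sqrt{-1}B$. We may assume the coefficient function multiplying $Z$ is general: for $\phi Z$ and $W\in\Gamma(L_-^{0,1})$ we have $[\phi Z,W]=\phi[Z,W]-W(\phi)Z$, and the second term lies in $L_+^{1,0}$. So it suffices to prove $[Z,W]\in T^{1,0}_{J_1}$.

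By Lemma~\ref{lem:splittingL}, $[A,\Gamma(L_-)]\subseteq\Gamma(L_-)$ and $[B,\Gamma(L_-)]\subseteq\Gamma(L_-)$, so $[Z,W]\in\Gamma(L_-\otimes\mathbb C)$. Also, $A$ and $B$ are $I$-holomorphic, so $Z$ is a holomorphic vector field of type $(1,0)$. Since $\mathcal L_A I=\mathcal L_B I=0$, the Lie derivative along $Z$ preserves $T^{0,1}_I$, and thus $[Z,W]\in T^{0,1}_I$. Together these give $[Z,W]\in L_-^{0,1}$. This establishes $[T^{1,0}_{J_1},T^{1,0}_{J_1}]\subseteq T^{1,0}_{J_1}$, and hence $N_{J_1}=0$ by the Newlander--Nirenberg theorem (or simply by the definition of $N_{J_1}$ via eigenbundles).

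The step requiring the most care is the mixed bracket. It depends on two facts from Lemma~\ref{lem:splittingL}: that the generators of $L_+$ preserve $L_-$, and that they are $I$-holomorphic. A general section of $L_+$ need not preserve $L_-$, which is why we reduce to $Z$ and absorb the coefficient term as above. As an alternative check, one could evaluate $N_{J_1}(Y_1,Y_2)$ directly on the four types of pairs, using $N_{J_1}=N_I$ on $L_+\times L_+$ and on $L_-\times L_-$, since $J_1=\pm I$ there and both subbundles are involutive and $I$-invariant.
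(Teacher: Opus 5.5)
Your proof is correct and follows the paper's argument. Both split $T^{1,0}_{J_1}=T^{1,0}_IL_+\oplus T^{0,1}_IL_-$, settle the pure brackets by involutivity of $L_\pm$ and integrability of $I$, and settle the mixed brackets using that the generators $A,B$ of $L_+$ preserve $L_-$ and are $I$-holomorphic. Your explicit handling of the term $-W(\phi)Z$ fills in a step the paper leaves implicit when it says $T^{1,0}_IL_+$ is generated by $A-\sqrt{-1}B$.
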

\begin{proof}
For $c=1$, $TM_+=L_+\oplus L_-$ and $J_1=I$ on $L_+$, whereas $J_1=-I$ on $L_-$ (cf. Lemma \ref{lem:splittingL}). Hence we have
\[
T_{J_1}^{1,0}M_+=T_I^{1,0}L_+\oplus T_I^{0,1}L_-.
\]
Since the distributions $L_\pm$ are involutive and $I$ is integrable (see Lemma \ref{lem:gradient}), 
\[
[\Gamma(T_I^{1,0}L_+),\Gamma(T_I^{1,0}L_+)]
\subseteq\Gamma(T_I^{1,0}L_+)
\]
and
\[
[\Gamma(T_I^{0,1}L_-),\Gamma(T_I^{0,1}L_-)]
\subseteq\Gamma(T_I^{0,1}L_-).
\]
It therefore remains to verify the mixed brackets. 
By Lemma \ref{lem:splittingL}, the generators $(1+b_1)V_I-IX_1, \,(1+b_1)IV_I+X_1$  both preserve $L_-$ and are $I$-holomorphic. Thus, for $W\in\{(1+b_1)V_I-IX_1,(1+b_1)IV_I+X_1\}$ and $Y\in\Gamma(L_-)$,
\[
[W,Y+\sqrt{-1}IY]
=[W,Y]+\sqrt{-1}[W,IY]
=[W,Y]+\sqrt{-1}I[W,Y]
\in\Gamma(T_I^{0,1}L_-).
\]
Since $T_I^{1,0}L_+$ is generated by $\big((1+b_1)V_I-IX_1\big)-\sqrt{-1}\big( (1+b_1)IV_I+X_1\big),$
it follows that
\[
[\Gamma(T_I^{1,0}L_+),\Gamma(T_I^{0,1}L_-)]
\subseteq\Gamma(T_{J_1}^{1,0}M_+).
\]
Hence $T_{J_1}^{1,0}M_+$ is involutive, and therefore $J_1$ is integrable.
\end{proof}
\begin{proof}[Proof of Theorem \ref{thm:local_converse}]
We first prove the result for $c=1$.
By Lemma \ref{lem:J_almost}, to prove the theorem we need to show that $N_{J_{1}}=0$, $H_{J_{1}}=-H$ and $\rho^{B,J_{1}}=0$. By Lemma  \ref{lem:J1_integrable},  $J_1$ is integrable. 

Let $\omega_+$ and $\omega_-$ be the restrictions of $\omega_I$ to $L_+$ and $L_-$, extended by zero on the other summand. Then
\[
\omega_I=\omega_++\omega_-,
\qquad
\omega_{J_{1}}=\omega_+-\omega_-.
\]
Writing $d=d_++d_-$ along the two foliations, the fact that the leaves are K\"ahler (Lemma \ref{lem:splittingL}) gives
\[
d\omega_I=d_-\omega_++d_+\omega_-,
\qquad
d\omega_{J_{1}}=d_-\omega_+-d_+\omega_-.
\]
The term $d_-\omega_+$ has two arguments in $L_+$ and one in $L_-$, whereas $d_+\omega_-$ has one argument in $L_+$ and two in $L_-$. Replacing $I$ by $J_1$ on the three arguments therefore gives
\[
H_{J_{1}}=d\omega_{J_{1}}(J_1\cdot,J_1\cdot,J_1\cdot)
=-d_-\omega_+(I \cdot, I \cdot, I \cdot) -d_+\omega_-(I \cdot,I\cdot, I\cdot)=-H.
\]
Since $dH=0$ (see Lemma \ref{lem:gradient}), the structure $(I,J_1,g)$ is generalized K\"ahler. Finally, since $L_+$ has rank $2$, 
\[
\omega_I^2=\omega_-^2+2\omega_+\wedge\omega_-,
\qquad
\omega_{J_{1}}^2=\omega_-^2-2\omega_+\wedge\omega_-.
\]
Writing $\theta_I=\theta_{I,+}+\theta_{I,-}$ according to the splitting, the equation $d\omega_I^2=\theta_I\wedge\omega_I^2$ separates into
\[
d_+(\omega_-^2)=\theta_{I,+}\wedge\omega_-^2,
\qquad
d_-\omega_+\wedge\omega_-
=\theta_{I,-}\wedge\omega_+\wedge\omega_-.
\]
The same identities give the Lee equation for $J_1$, with the mixed terms changing sign on both sides. Thus
\begin{equation}\label{eqn:local_converse_Lee_plus}
\theta_{J_{1}}=\theta_I=df+2\eta.
\end{equation}
The fact that $\rho^{B,J_{1}}=0$ follows by Claim \ref{claim:GRS} together with Lemma \ref{lem:gradient}.
Define $Q=\mathrm{Id}-2\eta \otimes V_I$ and $\widehat J_1=-QJ_1Q$. It is straightforward to check that $\widehat J_1=J_{-1}$. Therefore, we may apply Proposition \ref{prop:GK-parallel-reflection} to conclude that $(I,J_{-1},g)$ is GKBHE. 
\end{proof}

\subsection{The global converse theorem}

We now extend the local converse across the zero set of $F$. The main
point is that, although the expression of $J_c$ obtained in Theorem
\ref{thm:local_converse} contains a factor $F^{-1}$, the corresponding
tensor admits a smooth extension to the whole orbifold.

\begin{thm}\label{thm:global_converse}
Let $(S,I^T,g_K,\omega_K)$ be a compact connected K\"ahler orbifold surface with Ricci form $\rho_K$ and positive scalar curvature $R_K$. Suppose that $F$ is a smooth nonnegative orbifold function, not identically zero, such that $I^T\grad_{g_K}F$ is real holomorphic on $S$ and
\begin{equation}\label{eqn:global_converse_Ricci_potential}
\rho_K=-dd^c\log F
\end{equation}
on $S_+:=\{F>0\}$. Then $S_+$ is connected and dense in $S$.
Let $p:M\to S$ be a principal $\mathbb T^2$-orbibundle equipped with connection one-forms $\eta,\eta_I$ satisfying $d\eta=0,$ and $d\eta_I=-\frac12p^*\rho_K.$ Assume that the total space $M$ is smooth and set
\begin{equation}\label{eqn:global_converse_transverse_data}
f:=\log\left(\frac{R_K}{8}\right),
\qquad
g^T:=e^fg_K,
\qquad
\omega_I^T:=e^f\omega_K,
\end{equation}
and 
\begin{equation}\label{eqn:global_converse_metric}
g:=\eta^2+\eta_I^2+g^T,
\qquad
\omega_I:=\eta\wedge\eta_I+\omega_I^T.
\end{equation}
Define $I$ by the horizontal lift of $I^T$ on
$\mathcal H:=\ker\eta\cap\ker\eta_I$ and the standard complex structure
on the vertical frame dual to $(\eta,\eta_I)$. Then $g$ is
$I$-compatible, with fundamental form $\omega_I$.

For each $c\in\{\pm1\}$, the complex structure $J_c$ provided by
Theorem~\ref{thm:local_converse} on $M_+:=p^{-1}(S_+)$ extends uniquely
to a smooth $g$-compatible complex structure $\widetilde J_c$ on $M$.
Moreover, $(M,I,\widetilde J_c,g)$ is a compact GKBHE manifold for
each $c\in\{\pm1\}$ with horizontal rank $1$.
\end{thm}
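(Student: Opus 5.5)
The plan is in three steps: handle the open part $M_+$ with the local converse, remove the apparent singularity of $J_c$ along $p^{-1}(S\setminus S_+)$, and then extend the GKBHE identities by density.

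\emph{Step 1: the open part.} The connectedness and density of $S_+$ come from Proposition~\ref{prop:kahler-identities}, since $S$ is compact and $R_K>0$. On $S_+$, the data $(g_K,F)$ satisfy the hypotheses of Theorem~\ref{thm:local_converse}. That theorem gives a positive constant $\kappa=R_KF+\Delta_{g_K}F$ (which is then constant on all of $S$) and, for each $c\in\{\pm1\}$, a GKBHE structure $(I,J_c,g)$ on $M_+=p^{-1}(S_+)$. The tensors $g$, $I$, $\omega_I$ and $f=\log(R_K/8)$ are smooth on all of $M$, because $R_K>0$ everywhere. So the only issue is $J_c$. In \eqref{eqn:local_converse_J}, every coefficient is smooth on $M$ except the term
\[
\frac{c}{1+cb_c}\bigl(g(X_c,Y)IX_c-g(IX_c,Y)X_c\bigr),\qquad 1+cb_c=\frac{R_KF}{\kappa}.
\]
Note that $b_c$ and $X_c=-\tfrac{4c}{\kappa}I^T\grad_{g_K}F$ are themselves smooth on $S$. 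Up to the smooth factor $16/(\kappa R_K)$ and the conformal factor $e^f$, this term is the endomorphism of $\mathcal H$ associated, via $g^T$, with the $2$-form $F^{-1}\,dF\wedge d^cF$. It therefore suffices to show that $F^{-1}\,dF\wedge d^cF$ extends smoothly across $Z:=\{F=0\}$.

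\emph{Step 2: local form of $F$ near $Z$ (the main obstacle).} By the last part of the proof of Proposition~\ref{prop:kahler-identities}, $Z$ is a union of components of the zero set of the holomorphic Killing field $I^T\grad F$, so it is a totally geodesic complex suborbifold. I will argue that $Z$ is a smooth complex curve along which $F$ vanishes to exactly second order in the normal direction. By \eqref{eqn:local_converse_first_integrals}, $\Delta F=\kappa>0$ on $Z$, so the Hessian of $F$ is nonzero there. The linearized Killing field at a zero acts on the normal space as an infinitesimal rotation, and $F$ is its Hamiltonian; hence the Hessian of $F$ is a positive multiple of the metric on the normal bundle and vanishes on $TZ$. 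This should rule out isolated zeros, where the Hessian would be positive definite on both complex directions. I would double-check this against the constraint $|dF|^2/F\to\kappa$ coming from \eqref{eq:k_converse}.

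Granting this, work in a uniformizing chart with a local holomorphic defining function $w$ for $Z$, and set $\psi:=\log F-\log|w|^2$ on $S_+$. Then $dd^c\psi=-\rho_K$ there, which is smooth across $Z$, and $\psi$ is bounded near $Z$ because of the second-order vanishing. Subtracting a local smooth potential of $-\rho_K$ leaves a bounded pluriharmonic function on the complement of a complex hypersurface. Such a function extends across it (it is the real part of a holomorphic function with controlled growth; Riemann extension). Hence $F=|w|^2e^{\psi}$ with $\psi$ smooth.

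It then follows that
\[
\frac{dF\wedge d^cF}{F}=e^{\psi}\,\frac{\bigl(d|w|^2+|w|^2d\psi\bigr)\wedge\bigl(d^c|w|^2+|w|^2d^c\psi\bigr)}{|w|^2}
\]
is smooth. This uses $d|w|^2\wedge d^c|w|^2=2|w|^2\,\sqrt{-1}\,dw\wedge d\bar w$ (up to the normalization convention), and the cross terms visibly carry a factor $|w|^2$. Thus $J_c$ extends to a smooth endomorphism $\widetilde J_c$ of $TM$. The extension is unique because $M_+$ is dense, being the preimage of the dense set $S_+$. The invariance under the local orbifold groups passes to the limit, so $\widetilde J_c$ descends correctly.

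\emph{Step 3: closed identities by continuity.} The conditions $g$-orthogonality, $\widetilde J_c^2=-\mathrm{Id}$, $N_{\widetilde J_c}=0$, $d^c_{\widetilde J_c}\omega_{\widetilde J_c}=-d^c_I\omega_I$, and $\rho^{B,\widetilde J_c}=0$ are all closed conditions on smooth tensors. They hold on the dense set $M_+$, hence on $M$. Together with the BHE structure $(I,g)$ from Theorem~\ref{thm:BHE-six-to-four}, this makes $(M,I,\widetilde J_c,g)$ a compact GKBHE manifold.

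Finally, the horizontal rank is exactly one. From $\theta_{J_c}=df+2c\eta$ (see \eqref{eqn:local_converse_Lee_plus} and Corollary~\ref{cor:GKBHE-sign-involution}) we get $V_{\widetilde J_c}=cV_I$, so $X_1=0$ and $X_2=X_c$. Moreover $X_c\not\equiv0$ by Lemma~\ref{lem:Udense}. Hence $r=1$.
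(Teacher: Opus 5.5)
Your proposal is correct. Steps 1 and 3 match the paper: the local converse on $M_+$, extension of the closed identities by density, and the horizontal-rank check. The difference lies entirely in Step 2, the removal of the singularity.

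\textbf{The paper's route.} It never analyzes $Z=\{F=0\}$. It introduces the form $\Psi:=dd^cF+F\rho_K$, which is manifestly smooth on all of $S$. On $S_+$, expand $dd^c\log F=F^{-1}dd^cF-F^{-2}dF\wedge d^cF$ and use $\rho_K=-dd^c\log F$; this gives $\Psi=F^{-1}dF\wedge d^cF$. Hence the singular term of $J_cY$ equals $\frac{2ce^f}{\kappa}(\iota_Y\Psi)^{\sharp_{g^T}}$, and smoothness is immediate. No information about $Z$ is needed.

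\textbf{Your route.} You prove a local normal form $F=|w|^2e^{\psi}$. This is more work, but it gives genuinely more: in uniformizing charts, $Z$ is a smooth complex curve (a divisor) along which $F$ vanishes exactly to second order. This is consistent with the torus $D_1$ in the paper's Hopf example. Two points need tightening if you keep this route.

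\begin{itemize}
\item \emph{Isolated zeros.} The rotation heuristic alone does not exclude them. A holomorphic Killing field can have an isolated zero with equal weights, where the Hessian is a multiple of the whole metric. What excludes them is the first integral $|dF|^2=\kappa F-\tfrac12R_KF^2$. At $p\in Z$, set $A=\mathrm{Hess}\,F(p)$, which commutes with $I^T$. Comparing quadratic terms gives $A^2=\tfrac{\kappa}{2}A$. Also $\operatorname{tr}A=\Delta F(p)=\kappa$. So $A$ is $\tfrac\kappa2$ times the orthogonal projection onto an $I^T$-invariant plane, and $\ker A=\ker\nabla(I^T\grad F)$ is a complex line.
\item \emph{Pluriharmonic extension.} On $D\setminus Z$, a pluriharmonic function is a priori only of the form $\operatorname{Re}G+a\log|w|$, not globally the real part of a holomorphic function. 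Boundedness forces $a=0$. Alternatively, use removability of bounded harmonic functions across the real codimension-two set $Z$.
\end{itemize}

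With these two fixes your argument is complete. The paper's one-line identity for $\Psi$, however, makes the whole structural analysis of $Z$ unnecessary.
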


As in the previous subsection, all the computations are local and hence performed in uniformizing charts. The proof is based on the following obvious claim
\begin{claim}\label{lem:global_converse_positive_set}
The open set $S_+$ is connected and dense in $S$ and the open set $M_+$ is dense in $M$.
\end{claim}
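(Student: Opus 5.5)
The claim has three parts, and I would handle them in the following order: density of $S_+$, its connectedness, and then the density of $M_+=p^{-1}(S_+)$ in $M$.

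\emph{Density of $S_+$.} I would invoke Proposition~\ref{prop:kahler-identities} with $n=2$. Its hypotheses hold here: $S$ is a connected K\"ahler orbifold, $F\ge 0$, $S_+\neq\varnothing$ because $F\not\equiv 0$, $I^T\grad_{g_K}F$ is holomorphic, and $\rho_K=-dd^c\log F$ on $S_+$. The proposition then gives density of $S_+$ directly. The underlying reason is unique continuation for the holomorphic vector field $I^T\grad F$. If $F$ vanished on an open set, this field would vanish identically, so $F$ would be constant; since $F$ vanishes somewhere and is not identically zero, this is impossible.

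\emph{Connectedness of $S_+$.} Since $S$ is compact and $R_K>0$, the final assertion of Proposition~\ref{prop:kahler-identities} gives this as well. It is the one step with real content, and it is already proved there. Every point of $\{F=0\}$ is a minimum of $F$, so $dF=0$ there. Hence $\{F=0\}$ lies in the zero set of the nontrivial Killing field $I^T\grad F$. By Kobayashi's theorem, applied in uniformizing charts, each component of that zero set is a totally geodesic complex suborbifold of real codimension at least two, and $F$ is constant on each component. So $\{F=0\}$ is a finite union of closed suborbifolds of real codimension at least two. Removing such a set from a connected orbifold leaves it connected; in each chart this is the standard fact that removing a subset of codimension at least two does not disconnect a connected manifold.

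\emph{Density of $M_+$.} The projection $p:M\to S$ is continuous and open, since it is a principal orbibundle projection. Let $\mathcal V\subset M$ be a nonempty open set. Then $p(\mathcal V)$ is a nonempty open subset of $S$. By density of $S_+$ it meets $S_+$, so $\mathcal V$ meets $p^{-1}(S_+)=M_+$. Hence $M_+$ is dense in $M$.

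None of these steps is a real obstacle, because the substantive input, Kobayashi's description of the zero set, has already been used in Proposition~\ref{prop:kahler-identities}. The only point needing care is the orbifold bookkeeping. Connectedness must be checked on the underlying topological space. This holds because local connectedness of a complement of codimension at least two in a chart $\widetilde U$ passes to the quotient $\widetilde U/\Gamma$, and path-connectedness of the underlying space then follows by the usual chaining argument along a path in $S$.
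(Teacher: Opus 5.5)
Your proposal is correct and follows the paper's proof. Both statements about $S_+$ come from Proposition~\ref{prop:kahler-identities}: density by unique continuation of the holomorphic field $I^T\grad F$, and connectedness from its final assertion using compactness and $R_K>0$. The density of $M_+$ follows from the openness of $p$, exactly as you argue; your added orbifold bookkeeping for connectedness only makes explicit what the paper leaves implicit.
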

\begin{proof}
The first assertion follows by Proposition \ref{prop:kahler-identities}. We show the second.
Since $S_+$ is dense and  $p:M\to S$ is surjective and open, also $M_+$ must be dense. 
\end{proof}

\begin{proof}[Proof of Theorem \ref{thm:global_converse}]
By item \ref{item:kahler-ricci-square} in Proposition \ref{prop:kahler-identities} and Theorem \ref{thm:BHE-six-to-four}, the Hermitian structure $(I,g)$ is BHE.

Since $S_+$ is connected, we may apply Theorem \ref{thm:local_converse} to conclude that the restriction $p:M_+\to S_+$ is a principal $\mathbb T^2$-orbibundle endowed with two complex structures, $J_1$ and $J_{-1}$ such that $(I,J_c, g)$ is GKBHE. We now verify that $J_c$ extends smoothly and uniquely to a $g$-orthogonal almost complex structure $\tilde J_c $ on $M$. 

Define on $S$ the smooth real two-form
\begin{equation}\label{eqn:global_converse_Psi}
\Psi:=dd^cF+F\rho_K
\end{equation}
On $S_+$, we have
\[
dd^c\log F
=\frac1Fdd^cF-\frac1{F^2}dF\wedge d^cF.
\]
Using $\rho_K=-dd^c\log F$, we obtain that on $S_+$
\begin{equation}\label{eqn:global_converse_Psi_on_Splus}
\Psi=\frac1F\,dF\wedge d^cF.
\end{equation}
On $M_+$, the expression of $J_c$ given in Theorem
\ref{thm:local_converse} is
\begin{align}
J_cV_I&=c\bigl(b_cIV_I+X_c\bigr),\nonumber\\
J_c(IV_I)&=-cb_cV_I+IX_c,
\label{eqn:global_converse_J_vertical}
\end{align}
and, for every horizontal vector field $Y$,
\begin{align}
J_cY=-cIY-c\,g^T(X_c,Y)V_I-g^T(IX_c,Y)IV_I+\frac{c}{1+cb_c}
\left(g^T(X_c,Y)IX_c-g^T(IX_c,Y)X_c\right).
\label{eqn:global_converse_J_singular}
\end{align}
The expressions in \eqref{eqn:global_converse_J_vertical} are smooth
on all of $M$. Thus only the last term in
\eqref{eqn:global_converse_J_singular} is potentially singular along
$\{F=0\}$, since $\frac{c}{1+cb}=\frac{\kappa c}{R_K F}$. We prove that this last term admits a smooth extension on $S$. This suffices to say that $J_c$ is smooth on the whole $M$.

With respect to $g^T=e^fg_K$, we have
\begin{equation}\label{eqn:global_converse_X_duals}
X_c^{\flat_{T}}
=-\frac{4c}{\kappa}e^fd^cF,
\qquad
(IX_c)^{\flat_{T}}
=\frac{4c}{\kappa}e^fdF.
\end{equation}
Consequently,
\begin{equation}\label{eqn:global_converse_X_wedge}
X_c^{\flat_{T}}\wedge(IX_c)^{\flat_{T}}
=
\frac{16e^{2f}}{\kappa^2}\,dF\wedge d^cF.
\end{equation}
For every horizontal vector field $Y$,
\begin{equation} \label{eqn:smooth_extension}
g^T(X_c,Y)IX_c-g^T(IX_c,Y)X_c
=
\left(
\iota_Y
\bigl(
X_c^{\flat_{T}}\wedge(IX_c)^{\flat_{T}}
\bigr)
\right)^{\sharp_{g^T}}=\frac{16e^{2f}}{\kappa^2} \big(\iota_Y \big(dF\wedge d^cF) \big)^{\sharp_{g^{T}}}
\end{equation}
where the last two equalities follow by Equation \eqref{eqn:global_converse_X_wedge}.
Using the definition of $b$ (see Equation \eqref{eqn:local_converse_b_X}), 
$R_K=8e^f$, and
\eqref{eqn:global_converse_Psi_on_Splus}, we obtain that on $S_+$
\begin{equation} \label{eqn:extension}
\frac{c}{1+cb_c}
\left(g^T(X_c,Y)IX_c-g^T(IX_c,Y)X_c\right)
=
\frac{2ce^f}{\kappa}
\left(\iota_Y\Psi\right)^{\sharp_{g^T}}.
\end{equation}
The right-hand side is smooth on all of $S$, since $\kappa$ is a
positive constant. Thus
\eqref{eqn:extension} provides a smooth extension to $S$ of the
left-hand side, initially defined on $S_+$. We may therefore extend
$J_c$ by retaining \eqref{eqn:global_converse_J_vertical} on the
vertical distribution and setting
\begin{align}
\tilde J_cY={}&-cIY-c\,g^T(X,Y)V_I-g^T(IX,Y)IV_I+
\frac{2ce^f}{\kappa}
\left(\iota_Y\Psi \right)^{\sharp_{g^T}}
\label{eqn:global_converse_J_extended}
\end{align}
for every horizontal vector field $Y$.

The endomorphism agrees with the endomorphism $J_c$ constructed on the dense open set $M_+$ (Claim \ref{lem:global_converse_positive_set}). Therefore,
\[
\tilde J_c^2=-\mathrm{Id},
\qquad
g(\tilde J_c\,\cdot,\tilde J_c\,\cdot)=g
\]
hold on all of $M$ by continuity. Thus $J_c$ extends to a smooth
$g$-compatible almost-complex structure on $M$. Its uniqueness
follows from the density of $M_+$.

The Nijenhuis tensor of $\tilde J_c$ is a smooth tensor on $M$. It vanishes
on $M_+$ by Theorem \ref{thm:local_converse}, and hence vanishes
everywhere by density. Therefore $\tilde J_c$ is integrable.
The Bismut torsion and the Bismut Ricci form are smooth tensors
determined by $\tilde J_c$ and $g$. Since on $M_+$, $H_{\tilde J_c}=-H,
$ and $\rho^{B,\tilde J_c}=0$, these identities extend on $M$ by continuity. Furthermore, $ \theta_{\tilde J_c}=\theta_{J_c}$
Thus $(I,\tilde J_c,g)$ is GKBHE.

We now prove that the horizontal rank is indeed $1$. Since $M$ is compact and $\theta_I=df + 2 \eta$ (cf. Lemma \ref{lem:gradient}) and $\theta_{\tilde J_{c}}=df + 2c \eta$ (cf. Equation \eqref{eqn:local_converse_Lee_plus}and Proposition \ref{prop:GK-parallel-reflection}), the canonical vector fields of the GKBHE structures $(I,\tilde J_c,g)$ are $(V_I, c V_I)$. Therefore, $c V_I$ has no horizontal component and $ \tilde J_c \,(c V_I)$ has horizontal component $X_c$ by \eqref{eqn:local_converse_J}. Since $X_c$ is not identically zero (otherwise $F$ would be constant and $\rho_K$ would be zero), we conclude that the horizontal rank is precisely $1$.
\end{proof}

\begin{rmk}
The compactness assumption on $S$ can be replaced by the
assumptions that $S_+=\{F>0\}$ is nonempty and connected
and that the metric $g^T=\frac{R_K}{8}g_K$ on $S$
is complete, while retaining all the other hypotheses
of the construction,
thus yielding complete GKBHE structures
on the total space for both choices of sign.
\end{rmk}
\begin{rmk} \label{rmk:nu_soliton}
As pointed out by V.~Apostolov, the identity
$\rho_K=-dd^c\log F$, with $F>0$ and
$K=I\operatorname{grad}_{g_K}F$ Killing, is precisely
the steady $\nu$-soliton equation with weight $\nu(x)=x^{-2}$.
The corresponding notion in the compact shrinking (Fano)
setting is discussed in
\cite[Section~2.3]{ApostolovLahdiliNitta2025};
see also \cite{BermanWittNystrom2014,HanLi2023,Mabuchi2003}.
With the same conventions, for a Hamiltonian holomorphic
isometric action with moment map $\mu$, the steady
$\nu$-soliton equation reads
\[
\rho=\frac12 dd^c\log\bigl(\nu\circ\mu\bigr).
\]
In our setting, the holomorphic vector field
$K=I\operatorname{grad}_{g_K}F$ satisfies
\[
\iota_K\omega_K=-dF.
\]
Consequently, $K$ preserves both $\omega_K$ and $I$, and is
therefore Killing, with Hamiltonian potential $F$.
Using $F$ as the moment coordinate and setting
$\nu(x)=x^{-2}$ for $x>0$, we obtain
\[
\frac12 dd^c\log\nu(F)
=\frac12 dd^c\log(F^{-2})
=-dd^c\log F.
\]
Thus the steady $\nu$-soliton equation is exactly
$\rho=-dd^c\log F$. This interpretation applies on
$S_+=\{F>0\}$; the weight $\nu$ is singular at $x=0$.
\end{rmk}
% --- Bibliografia ---
\bibliographystyle{plain} 
\small {
\bibliography{biblio} }
\noindent

\medskip

\noindent
B. Brienza: IMPA, Estrada Dona Castorina, 110, CEP 22460-320, Rio de Janeiro, RJ (Brasil) \\
Email:\texttt{brienza.beatrice@impa.br}
\vskip0.5em

\end{document}